\newtheorem{thm}{Theorem}[section]
\newtheorem{lemma}[thm]{Lemma}
\newtheorem{proposition}[thm]{Proposition}
\newtheorem{corollary}[thm]{Corollary}
\theoremstyle{definition}
\newtheorem{dfn}{Definition}[section]
\theoremstyle{remark}
\newtheorem{rmk}{Remark}[section]
\newcommand{\lel}{\mathrel{\leqslant_\ell}}
\newcommand{\ler}{\mathrel{\leqslant_{r}}}
\DeclareMathOperator{\coim}{\mathrm{coim}}
\begin{document}
	
\title{CROSS-CONNECTION STRUCTURE\\ OF LOCALLY INVERSE SEMIGROUPS}

\author{P.A. AZEEF MUHAMMED}
\address{Department of Mathematics and Natural Sciences, Prince Mohammad Bin Fahd University\\ Al Khobar 31952, Kingdom of Saudi Arabia}
\email{azeefp@gmail.com}

\author{MIKHAIL V. VOLKOV}
\address{Institute of Natural Sciences and Mathematics, Ural Federal University\\ Ekaterinburg 620000, Russia}
\email{m.v.volkov@urfu.ru}

\author{KARL AUINGER}
\address{Fakult\"{a}t f\"{u}r Mathematik, Universit\"{a}t Wien\\Oskar-Morgenstern-Platz 1\\A-1090 Wien, Austria}
\email{karl.auinger@univie.ac.at}

\maketitle
	
\begin{abstract}
Locally inverse semigroups are regular semigroups whose idempotents form pseudo-semilattices. We characterise the categories that correspond to locally inverse semigroups in the realm of Nambooripad's cross-connection theory. Further, we specialise our cross-connection description of locally inverse semigroups to inverse semigroups and completely $0$-simple semigroups, obtaining structure theorems for these classes. In particular, we show that the structure theorem for inverse semigroups can be obtained using only one category, quite analogous to the Ehresmann--Schein--Nambooripad Theorem; for completely $0$-simple semigroups, we show that cross-connections coincide with structure matrices, thus recovering the Rees Theorem by categorical tools.
\end{abstract}

\section{Introduction}\label{sec1}

Nambooripad \cite{cross}, building on an approach initiated by Hall \cite{hall} and Grillet\cite{grilcross}, introduced the notion of a normal category as the abstract categorical model of principal one-sided ideals of a regular semigroup. Each regular semigroup $S$ gives rise to two normal categories: one that models the principal left ideals of $S$ and another one that corresponds to the principal right ideals. Then Nambooripad devised a structure called  \emph{cross-connection} that captured the non-trivial interrelation between these two normal categories. Cross-connections form a category, and Nambooripad proved that this category is equivalent to the category of regular semigroups.

Thus, regular semigroups have precise categorical counterparts. A natural research program is to explore this link between semigroups and categories, by browsing through interesting families of regular semigroups and looking how their properties manifest in the language of cross-connections. Within this framework, Nambooripad's school studied cross-connections of transformation semigroups of certain concrete species, see, e.g., \cite{tlx,tx,var,raj}. However, we believe that, taking into account the abstract character of the category-based approach, it is more promising to look at specializations of Nambooripad's correspondence to important \textbf{abstract} classes of regular semigroups. To the best of our knowledge, results in this direction restrict to the following so far: the first-named author and Rajan~\cite{css} characterized cross-connections of completely simple semigroups while Rajan~\cite{inv} considered cross-connections of inverse semigroups, though a complete characterization was not achieved in the latter case.

Here we characterize cross-connections of \emph{locally inverse semigroups}. There are several reasons for us to focus on this particular class. First of all, locally inverse semigroups were introduced and deeply studied by Nambooripad himself \cite{ksslocinv1,ksslocinv2} (under the name \emph{pseudo-inverse semigroups}) so that re-visiting them appears to be appropriate for a paper dedicated to his memory. Second, the class of locally inverse semigroups is of interest and importance from both the structural viewpoint and the viewpoint of the theory of so-called e-varieties of regular semigroups.  We refer to Pastijn~\cite{pastijnlocinv} and McAlister~\cite{mcal} for elegant covering theorems that reveal the structure of locally inverse semigroups. As for the e-varietal viewpoint, the e-variety of all locally inverse semigroups is distinguished in many aspects; in particular, it is one of the two largest e-varieties which admit bi-free objects, see Yeh~\cite{Yeh}. Last but not least, we have anticipated that the categorical counterpart of such a well-behaved sort of regular semigroups should be nice in a sense, and we believe that the present paper provides some supporting evidence for these expectations.

In Section \ref{seccxnlis}, we characterise the categories of principal one-sided ideals of locally inverse semigroups as so-called unambiguous categories and describe a category equivalence between the category of cross-connected unambiguous categories and the category of locally inverse semigroups. In Section \ref{secinv}, we specialise our construction to inverse semigroups to provide a new structure theorem for inverse semigroups and also retrieve (a weaker version of) the Ehresmann--Schein--Nambooripad Theorem. This can be seen as completing the line of research initiated in~\cite{inv}. In Section~\ref{seccss}, we relate the cross-connection structure of completely $0$-simple semigroups to the Rees Theorem; it turns out that cross-connections are nothing but sandwich-matrices. This generalises the results of \cite{css} and also gives a concrete illustration for the abstract discussion of Section \ref{seccxnlis}. We conclude in Section~\ref{secconcl} by outlining directions for future work.

We assume some familiarity with basic notions from category theory (such as functors and natural transformations) and semigroup theory (such as Green's relations). For undefined notions, we refer to \cite{mac,higginscat} for category theory and \cite{clif,howie,grillet} for semigroups. Inevitably, we have followed the general line of and used several results from Nambooripad's treatise~\cite{cross} but we have made a fair effort to make the present paper understandable without studying~\cite{cross} in detail.

\section{Cross-connections of locally inverse semigroups}\label{seccxnlis}

\subsection{Inverse and locally inverse semigroups}

Two elements $a,b$ of a semigroup $S$ are said to be \emph{inverses} of each other if $aba=a$ and $bab=b$. A semigroup is called \emph{regular} if each of its elements has an inverse. \emph{Inverse semigroups} are defined as semigroups in which every element has a unique inverse; given an element $x$ in such a semigroup, its unique inverse is denoted $x^{-1}$.

An element $e$ of a semigroup $S$ is called an \emph{idempotent} if $e=e^2$, and we let $E(S)$ stand for the set of all idempotents of $S$. A \emph{band} is a semigroup in which all elements are idempotents and a \emph{semilattice} is a commutative band.

Recall a classic characterization of inverse semigroups (see, e.g., \cite[Theorem II.2.6]{grillet}).
\begin{thm}\label{thminv}
Let $S$ be a semigroup. The following are equivalent:
\begin{enumerate}
	\item $S$ is an inverse semigroup.
	\item Every $\mathscr{R}$-class of $S$ contains exactly one idempotent and every
		$\mathscr{L}$-class of $S$ contains exactly one idempotent.
	\item $S$ is regular and $E(S)$ is a semilattice.
\end{enumerate}
\end{thm}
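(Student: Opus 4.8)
The plan is to prove the three-way equivalence by establishing a cycle of implications, namely $(1)\Rightarrow(2)\Rightarrow(3)\Rightarrow(1)$, since this avoids proving each of the six directions separately. Throughout, I will lean on the standard fact that in any regular semigroup, each Green class $\mathscr{R}$-class and each $\mathscr{L}$-class contains at least one idempotent, and on the basic observation that two idempotents $e,f$ are $\mathscr{R}$-related if and only if $ef=f$ and $fe=e$, while they are $\mathscr{L}$-related if and only if $ef=e$ and $fe=f$.

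For $(1)\Rightarrow(2)$, I would argue contrapositively by counting. Since every element of an inverse semigroup has an inverse, $S$ is in particular regular, so each $\mathscr{R}$-class meets $E(S)$. Suppose an $\mathscr{R}$-class contained two distinct idempotents $e,f$; I would show that each of $e,f$ serves as an inverse of a common element (indeed each idempotent is its own inverse, and one checks that $e$ and $f$ are both inverses of, say, a suitable product), contradicting uniqueness of inverses. The argument for $\mathscr{L}$-classes is dual.

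For $(2)\Rightarrow(3)$, the hypothesis forces $S$ to be regular, since having an idempotent in every $\mathscr{R}$-class already guarantees regularity by the standard characterization of regularity via Green's relations. The substantive point is that $E(S)$ is a \emph{semilattice}. First I would verify that the product of two idempotents is again an idempotent, which amounts to showing $E(S)$ is closed under multiplication (a band); here the one-idempotent-per-class condition is exactly what is needed to pin down $efef$. Then I would establish commutativity: for idempotents $e,f$, the products $ef$ and $fe$ both turn out to lie in the same $\mathscr{R}$-class and in the same $\mathscr{L}$-class as each other, and since each such class contains a unique idempotent and both $ef$, $fe$ are idempotent, they must coincide.

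For $(3)\Rightarrow(1)$, I would show that regularity plus commutativity of idempotents yields uniqueness of inverses. Given an element $a$ with two inverses $b,c$, I would manipulate the products $ab,ba,ac,ca$, which are all idempotents, and use that idempotents commute to deduce $b=bab=\dots=c$ through a short chain of equalities. I expect the main obstacle to lie in the commutativity half of $(2)\Rightarrow(3)$: closure of $E(S)$ under multiplication and the placement of $ef$ and $fe$ in a common Green class require care in tracking which idempotent sits in which class, and getting the two-sided conditions $ef\mathrel{\mathscr{R}}fe$ and $ef\mathrel{\mathscr{L}}fe$ simultaneously from the one-per-class hypothesis is the delicate computation. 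The other implications reduce to short idempotent manipulations once the Green-relation dictionary is in place.
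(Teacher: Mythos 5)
The paper offers no proof of this theorem: it is quoted as a classical fact with a pointer to \cite[Theorem II.2.6]{grillet}, so your attempt can only be measured against the standard textbook argument, which is essentially what your cycle $(1)\Rightarrow(2)\Rightarrow(3)\Rightarrow(1)$ reconstructs. Three of your four legs are correct as sketched. For $(1)\Rightarrow(2)$: if $e,f$ are idempotents with $e\mathrel{\mathscr{R}}f$, then $ef=f$ and $fe=e$, so $efe=e$ and $fef=f$, i.e.\ $e$ and $f$ are inverses of each other; since $e$ is also an inverse of itself, uniqueness of inverses gives $e=f$. The regularity half of $(2)\Rightarrow(3)$ is the standard Green-relations fact you quote. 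For $(3)\Rightarrow(1)$: if $b,c$ are inverses of $a$, then $ab=(aca)b=(ac)(ab)=(ab)(ac)=(aba)c=ac$ and dually $ba=ca$, whence $b=bab=b(ac)=(ba)c=(ca)c=cac=c$; your sketch matches this.

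The genuine gap is the band step of $(2)\Rightarrow(3)$. Saying that the one-idempotent-per-class condition ``is exactly what is needed to pin down $efef$'' is not an argument: hypothesis (2) can only compare idempotents already known to lie in a common $\mathscr{R}$- or $\mathscr{L}$-class, and at this stage $ef$ is not known to be idempotent, so there is nothing to compare it with. The missing idea is to \emph{manufacture} an idempotent from $ef$ via regularity: take an inverse $x$ of $ef$ and set $g:=fxe$. Then $g^{2}=f\bigl(x(ef)x\bigr)e=fxe=g$, and $g$ is an inverse of $ef$ since $(ef)g(ef)=(ef)x(ef)=ef$ and $g(ef)g=f\bigl(x(ef)x\bigr)e=g$ (using $e^{2}=e$, $f^{2}=f$). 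Now $(ef)g$ and $g(ef)$ are idempotents satisfying $(ef)g\mathrel{\mathscr{L}}g$ and $g(ef)\mathrel{\mathscr{R}}g$; as $g$ is itself idempotent, two applications of (2) give $(ef)g=g=g(ef)$, and then $ef=(ef)g(ef)=g(ef)=g\in E(S)$. Your commutativity step has a similar (smaller) imprecision: one cannot assert up front that $ef$ and $fe$ lie in the same $\mathscr{R}$-class and $\mathscr{L}$-class ``as each other''---that is exactly the conclusion. The correct route is through the auxiliary idempotent $(ef)(fe)$: once $ef$ and $fe$ are known idempotent they are mutual inverses, so $(ef)(fe)$ is an idempotent in the $\mathscr{R}$-class of $ef$ and in the $\mathscr{L}$-class of $fe$, and (2) forces $ef=(ef)(fe)=fe$. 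With these repairs your cycle closes; note that the usual textbook ordering $(1)\Rightarrow(3)\Rightarrow(2)\Rightarrow(1)$ puts the $fxe$ trick inside $(1)\Rightarrow(3)$, where uniqueness of inverses absorbs it painlessly, and makes $(3)\Rightarrow(2)$ a one-liner: for idempotents $e\mathrel{\mathscr{L}}f$ one has $ef=e$, $fe=f$, so commutativity gives $e=ef=fe=f$.
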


A regular semigroup $S$ is \emph{locally inverse} if $eSe$ is an inverse semigroup for each $e\in E(S)$. We need further characterizations of locally inverse semigroups; for this purpose, we require some more definitions.

Given a semigroup $S$, we define two preorders $\lel$ and $\ler$ on the set $E(S)$ as follows: $e\lel f$ if and only if $ef=e$ and $e\ler f$ if and only if $fe=e$. The equivalences $\mathscr{L}$ and $\mathscr{R}$ on $E(S)$ induced by the preorders $\lel$ and $\ler$ respectively are nothing but the restrictions of Green's classic equivalences on $S$. The intersection $\leqslant:=\lel\cap\ler$ is a partial order on $E(S)$. We denote by $\omega(e)$ the order ideal of the set $(E(S),\leqslant)$ generated by $e$, that is, $\omega(e):=\{f\in E(S)\mid f\leqslant e\}$. Similarly, $\omega^{\ell}(e):=\{f\in E(S)\mid f\lel e\}$ and $\omega^r(e):=\{f\in E(S)\mid f\ler e\}$.
A band is said to be \emph{left} [\emph{right}] \emph{normal} if it satisfies $xyz=xzy$ [respectively, $xyz=yxz$].

The following fact comes from \cite{ksslocinv1} and \cite[Result 1]{pastijnlocinv}.
\begin{thm}\label{thmlocinv}
	Let $S$ be a regular semigroup. The following are equivalent:
	\begin{enumerate}
		\item $S$ is a locally inverse semigroup;		
        \item if $e,f,g \in E(S)$ are such that $e\mathrel{(\mathscr{L}\cup\mathscr{R})}f$ and $e,f \in \omega(g)$, then $e=f$;
		\item for each $e\in E(S)$, the set $\omega(e)$ is a semilattice;
		\item for each $e\in E(S)$, the set $\omega^{\ell}(e)$ forms a left normal band and the set $\omega^r(e)$ forms a right normal band.
	\end{enumerate}
\end{thm}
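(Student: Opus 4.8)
The plan is to treat conditions (1)--(3) together by reducing everything to the local submonoids $eSe$ and invoking Theorem~\ref{thminv}, and then to derive (4) from (3) by direct band-theoretic manipulation (with (4)$\Rightarrow$(3) being easy). First I would record the elementary properties of $eSe$ for $e\in E(S)$: it is a regular semigroup with identity $e$; its idempotent set is exactly $\omega(e)$ (an idempotent $f=efe$ forces $ef=fe=f$, and conversely); and Green's relations $\mathscr{L},\mathscr{R}$ of $eSe$ agree with those of $S$ on $\omega(e)$ (for $f,h\le e$ one has $f\in(eSe)h$ iff $fh=f$, and dually). These are routine but are the workhorse of the argument.

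With this in hand, (1)$\Leftrightarrow$(2)$\Leftrightarrow$(3) is a localised restatement of Theorem~\ref{thminv}. Since $eSe$ is regular, each of its $\mathscr{L}$- and $\mathscr{R}$-classes meets $E(eSe)=\omega(e)$; condition (2) says precisely that, for every $g$, no such class of $gSg$ meets $\omega(g)$ twice, i.e.\ each class contains exactly one idempotent, which by Theorem~\ref{thminv}(2) means $gSg$ is inverse for all $g$ --- that is, (1). Likewise, $eSe$ is inverse iff $E(eSe)=\omega(e)$ is a semilattice by Theorem~\ref{thminv}(3), giving (1)$\Leftrightarrow$(3). For (4)$\Rightarrow$(3): given $x,y\in\omega(e)=\omega^{\ell}(e)\cap\omega^r(e)$, closure of the two normal bands forces $xy\in\omega(e)$, so $\omega(e)$ is a sub-band; and combining the left-normal law (which gives $xyx=xxy=xy$) with the right-normal law (which gives $xyx=yxx=yx$) yields $xy=yx$, so $\omega(e)$ is a commutative band, i.e.\ a semilattice.

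The substantive direction is (3)$\Rightarrow$(4), which I regard as the main obstacle. Fix $e$ and set $\phi(f):=ef$ for $f\in\omega^{\ell}(e)$, so that $fe=f$. The key observation is that $\phi$ maps $\omega^{\ell}(e)$ into the semilattice $\omega(e)$: one checks $(ef)^2=ef$, $e(ef)=ef$ and $(ef)e=ef$, whence $ef\le e$. Commutativity of $\omega(e)$ then gives, for $f,h\in\omega^{\ell}(e)$,
\[ efh=ef\cdot eh=eh\cdot ef=ehf, \]
and, more generally, $efhk=efkh$ for a third element $k\in\omega^{\ell}(e)$. From $efh=ehf$, left multiplication by $f$ and by $h$ (using $fe=f$ and $he=h$) yields the identities $fhf=fh$ and $hfh=hf$; these give $(fh)^2=f(hfh)=f\cdot hf=fhf=fh$, so $fh$ is idempotent, and $(fh)e=fh$ shows $\omega^{\ell}(e)$ is closed, hence a band. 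Finally, left multiplying $efhk=efkh$ by $f$ collapses the leading $fef$ to $f$ and delivers $fhk=fkh$, the left-normal law. The dual argument, using $f\mapsto fe$ into $\omega(e)$, shows $\omega^r(e)$ is a right normal band.

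The only real care needed throughout (3)$\Rightarrow$(4) is bookkeeping with the defining relations $fe=f$ (resp.\ $ef=f$); the conceptual content is carried entirely by the reduction to the commutative band $\omega(e)$, which is where condition~(3) enters. I expect no difficulty in the (1)--(3) block beyond checking the local-submonoid properties, and the whole obstacle is locating the map $\phi$ and reading off the two- and three-variable identities from the semilattice relation $efh=ehf$.
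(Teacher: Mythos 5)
Your proposal is correct, but there is nothing in the paper to compare it against: the paper does not prove Theorem~\ref{thmlocinv} at all, stating it as a known fact imported from \cite{ksslocinv1} and \cite[Result 1]{pastijnlocinv}. Judged on its own, your argument holds up. The three preliminary facts about the local submonoid are exactly the right workhorses and are all true: $eSe$ is a regular monoid with identity $e$ (an inverse of $x\in eSe$ in $S$ conjugates to one in $eSe$), $E(eSe)=\omega(e)$, and for idempotents the relations $fh=f$, $hf=h$ characterise $\mathscr{L}$ both in $S$ and in $eSe$, so the restricted Green's relations agree. These do carry the whole (1)--(3) block: regularity of $gSg$ gives at least one idempotent per $\mathscr{L}$- or $\mathscr{R}$-class, condition (2) caps it at one, and Theorem~\ref{thminv}(2), resp.\ (3), converts this into $gSg$ being inverse, resp.\ $\omega(g)$ being a semilattice. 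The computations in (3)$\Rightarrow$(4) also check out: for $f,h,k\in\omega^{\ell}(e)$ one has $ef\in\omega(e)$, the identity $efh=(ef)(eh)=(eh)(ef)=ehf$ and its three-variable analogue $efhk=(ef)(eh)(ek)=(ef)(ek)(eh)=efkh$ (the middle step of the latter needs $efhe=efh$, which follows from $he=h$), and left multiplication by $f$ or $h$ collapses these to $fhf=fh$, $hfh=hf$ (whence $(fh)^2=fh$ and $(fh)e=fh$, giving closure) and to the left normal law $fhk=fkh$; the argument for $\omega^{r}(e)$ is genuinely dual. The (4)$\Rightarrow$(3) step is likewise fine, since $\omega(e)=\omega^{\ell}(e)\cap\omega^{r}(e)$ is then closed and the two normal laws force $xy=xyx=yx$. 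In short: a complete, elementary, self-contained proof of a statement the paper only cites.
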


\subsection{Normal categories}
\label{ssnorcat}

For a small category $\mathcal C$, its set of objects is denoted by $v\mathcal C$ while its set of morphisms is denoted by just $\mathcal{C}$. For $c,d\in v\mathcal{C}$, the set of all morphisms from $c$ to $d$ is denoted by $\mathcal{C}(c,d)$. We compose functions and morphisms from left to right so that in expressions like $\varphi\psi$ or $\gamma\ast\delta$ etc., the left factor applies first. The identity morphism at an object $c\in v\mathcal{C}$ is denoted $1_c$.

A morphism $\varphi\colon c\to d$ is called an \emph{epimorphism} if it is left-cancellative, i.e., for all morphisms $\alpha,\beta\colon d \to e$, the equality $\varphi\alpha = \varphi\beta$ implies $\alpha=\beta$. Similarly, $\varphi\colon c\to d$ is called a \emph{monomorphism} if it is right-cancellative, i.e., for all morphisms $\alpha,\beta\colon b \to c$, the equality $\alpha\varphi=\beta\varphi$ implies $\alpha=\beta$. A morphism $\varphi\colon c\to d$ is called an \emph{isomorphism} if $\varphi\psi=1_c$ and $\psi\varphi=1_d$ for some morphism $\psi\colon d \to c$.
	
A \emph{preorder} is a category with at most one morphism from an object to another. A \emph{strict preorder} is a preorder whose only isomorphisms are the identity morphisms.

A \emph{subcategory} of a category $\mathcal{C}$ is a category $\mathcal{D}$ with $v\mathcal{D} \subseteq v\mathcal{C}$ and such that $\mathcal{D}(a,b) \subseteq \mathcal{C}(a,b)$ for all $a,b \in v\mathcal{D}$, with the same identities and composition of morphisms. $\mathcal{D}$ is a \emph{full} subcategory of $\mathcal{C}$ if $\mathcal{D}(a,b)=\mathcal{C}(a,b)$ for all $a,b \in v\mathcal{D}$.

\begin{dfn}\label{catsub}
	Let $\mathcal{C}$ be a small category and $\mathcal{P}$ a subcategory of $\mathcal{C}$ with $v\mathcal{P}=v\mathcal{C}$. The pair $(\mathcal{C},\mathcal{P})$ (often denoted by just $\mathcal{C}$) is a \emph{category with subobjects} if:
	\begin{enumerate}
		\item $\mathcal{P}$ is a strict preorder.
		\item Every morphism in $\mathcal{P}$ is a monomorphism in $\mathcal{C}$.
		\item If $\varphi,\psi\in \mathcal{P}$ and $\varphi=\alpha\psi$ for some $\alpha\in\mathcal{C}$, then $\alpha\in\mathcal{P}$.
	\end{enumerate}
\end{dfn}

In a category $(\mathcal{C},\mathcal{P})$ with subobjects, the morphisms in $ \mathcal{P}$ are called \emph{inclusions}. If $c'\to c$ is an inclusion, we write $c'\subseteq c$ and we denote this inclusion by $\iota(c',c)$. An inclusion $\iota(c',c)$ \emph{splits} if there exists $\theta\in\mathcal{C}(c,c')$ such that $\iota(c',c)\theta =1_{c'}$, and then the morphism $\theta$ is called a \emph{retraction}.

\begin{dfn}\label{normfact}
	Let $\mathcal{C}$ be a category with subobjects. A \emph{normal factorisation} of a morphism $\varphi\in\mathcal{C}$ is a decomposition of the form $\varphi=\theta\sigma\iota$, where $\theta$ is a retraction, $\sigma$ is an isomorphism, and $\iota$ is an inclusion in $\mathcal{C}$. The morphism $\theta\sigma$ is called the \emph{epimorphic component} of the morphism $\varphi$ and is denoted by $\varphi^\circ$. The codomain of $\varphi^\circ$ is called the \emph{image} of $\varphi$ and the codomain of the retraction $\theta$ is called the \emph{coimage} of $\varphi$. We denote the coimage of $\varphi$ by $\coim\varphi$.
\end{dfn}

\begin{dfn}
	Let $\mathcal{C}$ be a category with subobjects and $d\in v\mathcal{C}$. Then a function $\gamma\colon v\mathcal{C}\to \mathcal{C}$, $a\mapsto\gamma(a)\in\mathcal{C}(a,d)$ is said to be a \emph{normal cone with apex $d$} if:
	\begin{enumerate}
		\item $\iota(a,b) \gamma(b)=\gamma(a)$ whenever $a\subseteq b$;
		\item there exists at least one $c\in v\mathcal{C}$ such that $\gamma(c)\colon c\to d$ is an isomorphism.
	\end{enumerate}
\end{dfn}
For a normal cone $\gamma$, we denote by $c_\gamma$ the apex of $\gamma$ and the morphism $\gamma(c)$ is called the \emph{component} of the cone $\gamma$ at $c$.

\begin{dfn}[\!\!{\mdseries\cite[Section III.1.3]{cross}}]\label{dfnnormc}
	A category $\mathcal{C}$ is said to be \emph{normal} if:
	\begin{enumerate}
		\item [(NC 1)] $\mathcal{C}$ is a category with subobjects;
		\item [(NC 2)] every inclusion in $\mathcal{C}$ splits;
		\item [(NC 3)] every morphism in $\mathcal{C}$ admits a normal factorisation;
		\item [(NC 4)] for each $c\in v\mathcal{C}$, there exists a normal cone $\mu$ such that $\mu(c)=1_c$.
	\end{enumerate}
\end{dfn}

\subsection{Normal category of a regular semigroup}\label{ssrs}
As mentioned in the introduction, the concept of a normal category was devised by Nambooripad~\cite{cross} to capture the structure of one-sided ideals of an arbitrary regular semigroup in purely categorical terms.

Recall that for a given regular semigroup $S$, the category $\mathbb{L}(S)$ of the principal left ideals of $S$ has the set $\{ Se : e \in E(S) \}$ as the set of objects and partial right translations (i.e., right translations restricted to a principal left ideal) as morphisms. Namely, for each pair of principal left ideals $Se$ and $Sf$, the set of all morphisms from $Se$ to $Sf$ is  $ \{ \rho(e,u,f) : u\in eSf \}$, where the map $\rho(e,u,f)$
sends $x\in Se$ to $xu\in Sf$. One can easily see that $\rho(e,u,f)=\rho(g,v,h)$ if and only if $e\mathrel{\mathscr{L}}g$, $f\mathrel{\mathscr{L}}h$, and $v=gu$ \cite[Lemma III.12]{cross}. Two morphisms $\rho(e,u,f)$ and $\rho(g,v,h)$ are composable if and only $Sf=Sg$ (i.e., if and only if $f\mathrel{\mathscr{L}}g$) and then $\rho(e,u,f)\rho(g,v,h) = \rho(e,uv,h)$.

Define a subcategory $\mathcal{P}_\mathbb{L}$ of $\mathbb{L}(S)$ with $v\mathcal{P}_\mathbb{L}=v\mathbb{L}(S)$ and such that $\mathcal{P}_\mathbb{L}$ has a morphism from $Se$ to $Sf$ if and only if $Se\subseteq Sf$, in which case  there is a unique morphism, namely $\rho(e,e,f)$. The morphisms of the subcategory $\mathcal{P}_\mathbb{L}$ correspond to the inclusions of principal ideals. By definition, $\mathcal{P}_\mathbb{L}$ is a strict preorder. Clearly, every morphism $\rho(e,e,f)$ is a monomorphism. Also for morphisms $\rho(e,e,f),\rho(g,g,f)\in \mathcal{P}_\mathbb{L}$ such that $\rho(e,e,f)=\rho(h,u,g)\rho(g,g,f)$ in the category $\mathbb{L}(S)$, we have $\rho(e,e,f)=\rho(h,u,f)$, whence $u=he=h$. Therefore, the morphism $\rho(h,u,g)=\rho(h,h,g)$ belongs to $\mathcal{P}_\mathbb{L}$. Thus, all conditions of Definition~\ref{catsub} are satisfied and $(\mathbb{L}(S),\mathcal{P}_\mathbb{L})$ is a category with subobjects. We refer to morphisms in $\mathcal{P}_\mathbb{L}$ as inclusions.

Now observe that for every inclusion $\rho(e,e,f)$, we have $fe\in fSe$ so that $\rho(f,fe,e)$ is a morphism in $\mathbb{L}(S)$. Since $Se\subseteq Sf$, we have $ef=e$ whence
\[
\rho(e,e,f)\rho(f,fe,e)=\rho(e,e(fe),e)=\rho(e,(ef)e,e)=\rho(e,e,e)=1_{Se}.
\]
Thus, every inclusion in the category $\mathbb{L}(S)$ splits, and $\rho(f,fe,e)$ is a retraction.

It is shown in~\cite[Corollary III.14]{cross} that every morphism in the category $\mathbb{L}(S)$ admits a normal factorisation in the sense of Definition~\ref{normfact}.

Now, for each $a\in S$, we define a function $\rho^a\colon v\mathbb{L}(S)\to \mathbb{L}(S)$ as follows:
\begin{equation}\label{eqnprinc}
\rho^a(Se):= \rho(e,ea,f) \text{ where } f\mathrel{\mathscr{L}}a.
\end{equation}
It is easy to verify that the map $\rho^a$ is well-defined, that is, it does not depend on the particular choice of an idempotent generator of the left ideal $Se$ nor on the particular choice of an idempotent in the $\mathscr{L}$-class of $a$. Moreover, $\rho^a$ is a normal cone with apex $Sf$ in the sense of Definition~\ref{dfnicone}, see~\cite[Lemma III.15]{cross}.

In the sequel, the normal cone $\rho^a$ is called the \emph{principal cone} determined by the element $a$. In particular, observe that, for an idempotent $e\in E(S)$, we have a principal cone $\rho^e(Se)= \rho(e,e,e)=1_{Se}$.

Summarizing the above discussion, we see that the category $\mathbb{L}(S)$ satisfies all conditions (NC1)--(NC4) from the definition of a normal category (Definition~\ref{dfnnormc}).

\subsection{Normal category of a locally inverse semigroup}
\label{sslis}
Now we aim to isolate categorical properties that distinguish locally inverse semigroups within the framework of Section~\ref{ssrs}.

\begin{lemma}
\label{lemret}
If $S$ is locally inverse, every inclusion $\rho(e,e,f)\in\mathbb{L}(S)$ splits uniquely, that is, if $\varphi,\psi\in\mathbb{L}(S)$ are such that $\rho(e,e,f)\varphi=\rho(e,e,f)\psi=1_{Se}$, then $\varphi=\psi$.
\end{lemma}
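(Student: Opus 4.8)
The purely categorical content of the statement is that the split monomorphism $\rho(e,e,f)$ admits only one retraction; since in an arbitrary category a split mono may have many retractions, local inverseness of $S$ must enter in an essential way, and pinning down exactly where it is used is the heart of the matter. I expect the semigroup-theoretic core to be short, with the real care needed in the bookkeeping that translates the categorical equations into algebra.

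First I would reduce the claim to an algebraic identity. Any morphism $\varphi$ with $\rho(e,e,f)\varphi = 1_{Se}$ must go from $Sf$ to $Se$, so $\varphi = \rho(f,u,e)$ for some $u \in fSe$, and likewise $\psi = \rho(f,v,e)$ with $v \in fSe$. Using the composition rule gives $\rho(e,e,f)\rho(f,u,e) = \rho(e,eu,e)$, and comparing this with $1_{Se} = \rho(e,e,e)$ through the equality criterion for the $\rho$'s turns the hypothesis into the conditions $eu = e$ and $ev = e$. By the same criterion, $\rho(f,u,e) = \rho(f,v,e)$ amounts to $u = v$ (since $fu=u$), so it suffices to prove that there is a unique $u \in fSe$ with $eu = e$.

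Next I would locate everything inside the local submonoid $fSf$. Since $u \in fSe$ we have $fu = u$ and $ue = u$; combined with $ef = e$ (which is exactly $Se \subseteq Sf$), the latter yields $uf = (ue)f = u(ef) = ue = u$, so $u \in fSf$. Put $g := fe$; a short computation using $ef = e$ shows that $g$ is idempotent and $gf=fg=g$, so $g \in \omega(f)$, and $g$ is precisely the element giving the retraction $\rho(f,fe,e)$ already exhibited before the lemma. Rewriting the two defining conditions in terms of $g$, one checks $ug = (uf)e = ue = u$ and $gu = f(eu) = fe = g$.

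The decisive step is then almost formal: from $ug = u$ and $gu = g$ I substitute $g = gu$ into $u = ug$ to get $u = u(gu) = (ug)u = u^2$, so that $u$ is idempotent — and this deduction uses only associativity. At this point local inverseness enters: $u$ and $g$ are idempotents lying in $\omega(f)$, which by condition (3) of Theorem~\ref{thmlocinv} is a semilattice, so $u$ and $g$ commute; hence $u = ug = gu = g = fe$. The same argument gives $v = fe$, whence $u = v$ and therefore $\varphi = \psi$. I expect the only delicate point to be the translation in the second paragraph (correctly reading off $eu = e$ and $ev = e$ via the equality criterion); the core observation, that the splitting conditions force $u$ to be idempotent, is what lets commutativity of idempotents below $f$ finish the proof.
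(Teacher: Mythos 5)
Your proof is correct; every step checks out: the reduction to the algebraic claim (uniqueness of $u\in fSe$ with $eu=e$), the verification that such a $u$ is an idempotent lying in $\omega(f)$, and the concluding commutativity argument are all sound. Where you differ from the paper is in how local inverseness is finally invoked. The paper notes that any retraction element $g\in fSe$ satisfies $ge=g$ and $eg=e$, hence is an idempotent of $\omega(f)$ that is $\mathscr{L}$-related to $e$, and then applies condition (2) of Theorem~\ref{thmlocinv}: an $\mathscr{L}$-class meets $\omega(f)$ in at most one idempotent, so $g$ is unique. You instead apply condition (3): $\omega(f)$ is a semilattice, so your idempotent $u$ commutes with $g=fe$, and the relations $ug=u$, $gu=g$ then force $u=ug=gu=g=fe$. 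The two routes are equally elementary, but they buy different things. The paper's argument sets up the pattern reused in Lemma~\ref{lemnormfact} (the same condition (2), with $\mathscr{R}$-classes in place of $\mathscr{L}$-classes), whereas yours proves a sharper statement: the unique retraction is identified explicitly as $\rho(f,fe,e)$, the very retraction exhibited in Section~\ref{ssrs}, and your closing step makes no reference to Green's relations at all. One cosmetic simplification: your detour through $u=ug=u(gu)=(ug)u=u^2$ to get idempotency is unnecessary, since $u^2=(ue)u=u(eu)=ue=u$ follows directly from $ue=u$ and $eu=e$ --- which is exactly how the paper establishes $g^2=g$.
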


\begin{proof}
Take any element $g\in fSe$ such that the morphism $\rho(f,g,e)$ satisfies $\rho(e,e,f)\rho(f,g,e)=1_{Se}$. Then $\rho(e,eg,e)=\rho(e,e,e)$ whence $eg=e$. As $g\in fSe$, we have $ge=g$ whence $g^2=(ge)g=g(eg)=ge=g$ and $g\mathrel{\mathscr{L}}e$. Besides that, we have $fg=gf=g$ since $g\in fSe$ and $ef=e$ in view of $Se\subseteq Sf$. We conclude that $g\in\omega(f)$. As $S$ is locally inverse, Theorem \ref{thmlocinv}(2) implies that each ${\mathscr{L}}$-class of $S$ has at most one idempotent in common with $\omega(f)$ so that the conditions $g\mathrel{\mathscr{L}}e$ and $g\in\omega(f)$ uniquely define $g$, and hence, the retraction $\rho(f,g,e)$.
\end{proof}

\begin{rmk}
\label{remcat}
The proof of Lemma~\ref{lemret} shows that for an arbitrary retraction $\rho(f,g,e)$, the element $g$ is an idempotent in $\omega(f)$ such that $g\mathrel{\mathscr{L}}e$. Hence $Se=Sg$ and so $\rho(f,g,e)=\rho(f,g,g)$ and $\rho(e,e,f)=\rho(g,g,f)$. We see that in $\mathbb{L}(S)$, every retraction may be represented as $\rho(f,g,g)$ and every inclusion may be represented as $\rho(g,g,f)$ for some $g\in\omega(f)$.
\end{rmk}

For an arbitrary regular semigroup $S$, morphisms of $\mathbb{L}(S)$ may admit several different normal factorisations. Our next lemma shows that locally inverse semigroups behave better in the respect.

\begin{lemma}\label{lemnormfact}
If $S$ is locally inverse, every morphism in $\mathbb{L}(S)$ has a unique normal factorisation.	
\end{lemma}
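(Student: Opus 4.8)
The plan is to strip a normal factorisation $\varphi=\theta\sigma\iota$ of $\varphi=\rho(e,u,f)$ down to its only genuinely ambiguous ingredient, the retraction, and then to remove that ambiguity with Theorem~\ref{thmlocinv}(2). First I would check that the inclusion and the epimorphic component are rigid. Writing $\iota=\rho(i,i,f)$ for the inclusion of the image $Si\subseteq Sf$ and $\theta\sigma=\rho(e,w,i)$, the equation $\varphi=\theta\sigma\iota=\rho(e,w,f)$ forces $w=u$. Moreover $\theta$ maps $Se$ onto its codomain and $\sigma$ is a bijection, so $\theta\sigma=\rho(e,u,i)$ maps $Se$ onto $Si$; since the image of $\rho(e,u,i)$ is $Su$, this gives $Si=Su$. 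Hence in every normal factorisation the image is $Su$, the part $\iota$ is the unique inclusion $Su\subseteq Sf$, and the epimorphic component is $\varphi^\circ=\rho(e,u,i)$ for any idempotent generator $i$ of $Su$. As a retraction is an epimorphism, the equality $\theta\sigma=\varphi^\circ$ then determines $\sigma$ as soon as $\theta$ is known, so the whole problem collapses to showing that $\theta$ is unique.

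Next I would determine precisely which retractions can appear. By Remark~\ref{remcat} a retraction out of $Se$ is of the form $\theta=\rho(e,g,g)$ with $g$ an idempotent in $\omega(e)$, its codomain being the coimage $Sg$. The isomorphism $\sigma\colon Sg\to Si$ then has the form $\rho(g,s,i)$, and $\rho(g,s,i)$ is an isomorphism exactly when $s\mathrel{\mathscr{R}}g$ and $s\mathrel{\mathscr{L}}i$; indeed a two-sided inverse $\rho(i,s',g)$ exists if and only if $ss'=g$ and $s's=i$. Evaluating $\theta\sigma=\rho(e,gs,i)=\rho(e,s,i)$ and comparing with $\varphi^\circ=\rho(e,u,i)$ yields $s=u$, and therefore $g\mathrel{\mathscr{R}}u$. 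Thus every retraction occurring in a normal factorisation of $\varphi$ is $\rho(e,g,g)$ for an idempotent $g$ subject to the two conditions $g\in\omega(e)$ and $g\mathrel{\mathscr{R}}u$.

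Finally I would apply local inverseness. If $g$ and $g'$ are idempotents with $g,g'\in\omega(e)$ and $g\mathrel{\mathscr{R}}u\mathrel{\mathscr{R}}g'$, then $g\mathrel{\mathscr{R}}g'$, so $g\mathrel{(\mathscr{L}\cup\mathscr{R})}g'$ while both lie in $\omega(e)$; Theorem~\ref{thmlocinv}(2) gives $g=g'$ at once. This proves $\theta$ unique, whence $\sigma$ is determined and, together with the rigidity of $\iota$, the normal factorisation of $\varphi$ is unique. The one point requiring care is the book-keeping in the second step that singles out $g\mathrel{\mathscr{R}}u$ as the sole surviving degree of freedom: for a general regular semigroup the idempotents $\mathscr{R}$-related to $u$ can generate distinct left ideals $Sg$, producing genuinely different coimages, and it is exactly this freedom that Theorem~\ref{thmlocinv}(2) annihilates in the locally inverse case.
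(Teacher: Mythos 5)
Your proof is correct, and it takes a genuinely, if modestly, different route from the paper's. The paper writes an arbitrary normal factorisation as $\rho(e,g,g)\rho(g,u,h)\rho(h,h,f)$ with $g\in\omega(e)$ and $h\in\omega(f)$ (via Remark~\ref{remcat}), deduces from the invertibility of the middle factor that $g\mathrel{\mathscr{R}}u$ and $h\mathrel{\mathscr{L}}u$, and then invokes Theorem~\ref{thmlocinv}(2) \emph{twice}, symmetrically: once to see that $\omega(e)$ meets the $\mathscr{R}$-class of $u$ in at most one idempotent (fixing the retraction) and once to see that $\omega(f)$ meets the $\mathscr{L}$-class of $u$ in at most one idempotent (fixing the inclusion), after which the isomorphism $\rho(g,u,h)$ is forced. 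You instead dispose of the inclusion and the epimorphic component \emph{before} touching the locally inverse hypothesis, by the set-theoretic observation that $\theta\sigma$ maps $Se$ onto its codomain while the underlying map of $\rho(e,u,i)$ has image $Seu=Su$, so the image object is $Su$ and $\varphi^\circ=\rho(e,u,i)$ in every factorisation; then epi-cancellation of the retraction reduces the whole problem to the uniqueness of $\theta$, and Theorem~\ref{thmlocinv}(2) is applied only once, to the idempotents of $\omega(e)\cap R_u$. What your version buys is a sharper localisation of where local inverseness enters: the image, the inclusion, and the epimorphic component of a morphism in $\mathbb{L}(S)$ are unique over \emph{any} regular semigroup, and the only genuine degree of freedom in a normal factorisation is the choice of coimage $Sg$ with $g\in\omega(e)$, $g\mathrel{\mathscr{R}}u$ --- exactly the freedom that condition (2) of Theorem~\ref{thmlocinv} kills. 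What the paper's version buys is symmetry and brevity: one computation handles retraction and inclusion by the same token, without needing the auxiliary facts that retractions are epimorphisms and that isomorphisms of $\mathbb{L}(S)$ are bijections on underlying sets.
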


\begin{proof}
Let $\rho(e,u,f)$ with $u\in eSf$ be a morphism in the category $\mathbb{L}(S)$. Consider an arbitrary normal factorisation $\rho(e,u,f)=\theta\sigma\iota$, where $\theta$ is a retraction, $\sigma$ is an isomorphism, and $\iota$ is an inclusion in $\mathbb{L}(S)$. According to Remark~\ref{remcat}, we can write $\theta=\rho(e,g,g)$ for some $g\in\omega(e)$ and $\iota=\rho(h,h,f)$ for some $h\in\omega(f)$. Then $\sigma=\rho(g,v,h)$ for some $v\in gSh$, and we have
\[
\rho(e,u,f)=\theta\sigma\iota=\rho(e,g,g)\rho(g,v,h)\rho(h,h,f).
\]
The right hand side is equal to $\rho(e,gvh,f)=\rho(e,v,f)$ since $v\in gSh$, and we conclude that $u=v$. Thus, each normal factorisation of $\rho(e,u,f)$ can be written as
\begin{equation}
\label{eq:normalfact}
\rho(e,u,f)=\rho(e,g,g)\rho(g,u,h)\rho(h,h,f)
\end{equation}
for some $g\in\omega(e)$ and some $h\in\omega(f)$.

Since $\rho(g,u,h)$ is an isomorphism in $\mathbb{L}(S)$, there exists $u'\in hSg$ such that
\[
\rho(g,u,h)\rho(h,u',g)=1_{Sg}=\rho(g,g,g)\ \text{ and }\ \rho(h,u',g)\rho(g,u,h)=1_{Sh}=\rho(h,h,h).
\]
This implies $uu'=g$ and $u'u=h$. Since $uu'=g$, we have $g\in uS$; on the other hand, as shown above $u=v=gvh$ whence $u\in gS$. We thus conclude that $g\mathrel{\mathscr{R}}u$. Similarly, $h\mathrel{\mathscr{L}}u$. The diagram~\eqref{eq:diagram} represents the relations between the elements $e,f,g,h,u,u'$.
\begin{equation}
\label{eq:diagram}
\xymatrixcolsep{2pc}\xymatrixrowsep{1.5pc}
\xymatrix{
	e\ar@{.}[dr]^{\mathrel{\omega}}&&u\ar@{-}[dr]^{\mathrel{\mathscr{L}}}\ar@{-}[dl]_{\mathrel{\mathscr{R}}}&&f\ar@{.}[dl]_{\mathrel{\omega}}\\
	&g\ar@{-}[dr]_{\mathrel{\mathscr{L}}}&&h\ar@{-}[dl]^{\mathrel{\mathscr{R}}}\\
	&&u'
}
\end{equation}

Since $S$ is locally inverse, Theorem \ref{thmlocinv}(2) implies that each $\mathrel{\mathscr{R}}$-class of $S$ has at most one idempotent in common with $\omega(e)$ so that the conditions $g\mathrel{\mathscr{R}}u$ and $g\in\omega(e)$ uniquely define the element $g$, and consequently fix the retraction $\rho(e,g,g)$. Similarly, the conditions $h\mathrel{\mathscr{L}}u$ and $h\in\omega(f)$ uniquely define the idempotent $h$, and therefore, fix the inclusion $\rho(h,h,f)$. Moreover, as soon as $g$ and $h$ are fixed, so is the isomorphism $\rho(g,u,h)$. Altogether, the decomposition \eqref{eq:normalfact} is unique.
\end{proof}

Lemmas~\ref{lemret} and~\ref{lemnormfact} show that for $\mathbb{L}(S)$ with $S$ being locally inverse, the conditions (NC2) and (NC3) from Definition~\ref{dfnnormc} hold with uniqueness. This motivates the next definition.
\begin{dfn}
	A category $\mathcal{C}$ is said to be an \emph{unambiguous category} if:
	\begin{enumerate}
		\item [(UC 1)] $\mathcal{C}$ is a category with subobjects;
		\item [(UC 2)] every inclusion in $\mathcal{C}$ splits \emph{uniquely};
		\item [(UC 3)] every morphism in $\mathcal{C}$ admits a \emph{unique} normal factorisation;
		\item [(UC 4)] for each $c\in v\mathcal{C}$ there exists a normal cone $\mu$ such that $\mu(c)=1_c$.
	\end{enumerate}
\end{dfn}

Thus, an unambiguous category is a special version of a normal category, and we can state the following fact.

\begin{proposition}
\label{prop:unambigous}
For every locally inverse semigroup, its category of all principal left ideals is unambiguous.
\end{proposition}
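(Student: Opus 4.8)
The plan is to observe that this proposition is essentially an assembly of results already established, since the bulk of the work has been carried out in Subsection~\ref{ssrs} together with Lemmas~\ref{lemret} and~\ref{lemnormfact}. Recall that at the end of Subsection~\ref{ssrs} it was shown that for \emph{every} regular semigroup $S$, the category $\mathbb{L}(S)$ satisfies all four conditions (NC1)--(NC4) of Definition~\ref{dfnnormc}; thus $\mathbb{L}(S)$ is a normal category and, in particular, a category with subobjects. This immediately delivers (UC1), which coincides with (NC1), as well as (UC4), which is literally (NC4). For these two conditions nothing beyond the general theory is needed, regardless of whether $S$ is locally inverse.

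The remaining two conditions are precisely the points at which local inverseness intervenes, upgrading the bare \emph{existence} clauses (NC2) and (NC3) to the \emph{uniqueness} clauses required. For (UC2): condition (NC2) already guarantees that every inclusion in $\mathbb{L}(S)$ splits, and Lemma~\ref{lemret} shows that when $S$ is locally inverse the retraction witnessing this splitting is uniquely determined, which is exactly (UC2). For (UC3): condition (NC3) guarantees that every morphism of $\mathbb{L}(S)$ admits at least one normal factorisation, and Lemma~\ref{lemnormfact} shows that when $S$ is locally inverse this factorisation is unique, which is exactly (UC3). Hence all of (UC1)--(UC4) hold, and $\mathbb{L}(S)$ is unambiguous.

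I expect essentially no obstacle here, because the two genuinely delicate arguments---invoking Theorem~\ref{thmlocinv}(2) to pin down the unique idempotent lying in a prescribed $\mathscr{L}$- or $\mathscr{R}$-class inside a given order ideal $\omega(e)$ or $\omega(f)$---have already been isolated into Lemmas~\ref{lemret} and~\ref{lemnormfact}. The only care required is bookkeeping: matching each condition (UC$i$) to the corresponding condition (NC$i$) together with the appropriate uniqueness lemma, and recording that (UC1) and (UC4) require no local-inverse hypothesis whatsoever. In effect the proposition is a corollary packaging Lemmas~\ref{lemret} and~\ref{lemnormfact} on top of the general normal-category structure of $\mathbb{L}(S)$, and the write-up will be correspondingly short.
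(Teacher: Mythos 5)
Your proposal is correct and follows exactly the paper's own route: the paper derives this proposition by combining the fact (established in Section~\ref{ssrs}) that $\mathbb{L}(S)$ is a normal category for any regular $S$ with Lemmas~\ref{lemret} and~\ref{lemnormfact}, which upgrade (NC2) and (NC3) to their uniqueness versions (UC2) and (UC3) when $S$ is locally inverse. The bookkeeping you describe is precisely the content of the paper's (implicit) proof.
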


\subsection{Semigroup of normal cones of an unambiguous category}\label{ssuac}

Next, we proceed to show that every unambiguous category arises as $\mathbb{L}(S)$ for some locally inverse semigroup $S$. To this end, we use Nambooripad's construction of the semigroup of normal cones of a normal category and show that it produces a locally inverse semigroup when being applied to an unambiguous category.

Given a normal category $\mathcal{C}$, we denote by $\widehat{\mathcal{C}}$ the set of all normal cones in ${\mathcal{C}}$. For $\gamma\in\widehat{\mathcal{C}}$, if $\varphi\in \mathcal{C}(c_\gamma,d)$ is an epimorphism, then as in \cite[Lemma I.1]{cross}, we can easily see that the map
\begin{equation}\label{eqnbin0}
\gamma\ast\varphi\colon c \mapsto \gamma(c)\varphi \text{ for all } c\in v\mathcal{C}	
\end{equation}
is a normal cone such that $c_{\gamma\ast\varphi}=d$. Using this observation, one can define a binary composition on $\widehat{\mathcal{C}}$ as follows: for $\gamma,\delta\in  \widehat{\mathcal{C}}$,
\begin{equation}\label{eqnbin}
\gamma \cdot\delta:=\gamma \ast(\delta(c_{\gamma}))^\circ
\end{equation}
where $(\delta(c_{\gamma}))^\circ$ is the epimorphic component of the morphism $\delta(c_{\gamma})$.

\begin{lemma}[\!\!{\mdseries\cite[Theorem I.2]{cross}}]\label{lemrs}
	Let $\mathcal{C}$ be a normal category. The set $\widehat{\mathcal{C}}$ of all its normal cones forms a regular semigroup under the binary composition \eqref{eqnbin}. A normal cone $\mu$ in $\mathcal{C}$ is an idempotent if and only if $\mu(c_\mu)=1_{c_\mu}$.
\end{lemma}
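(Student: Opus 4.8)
The plan is to check the three semigroup axioms for the operation \eqref{eqnbin} --- closure, associativity, regularity --- and then to pin down the idempotents, drawing throughout on two elementary ``toolbox'' facts. First I would recall that, by the observation preceding \eqref{eqnbin} (a routine check using the compatibility relation $\iota(a,b)\gamma(b)=\gamma(a)$ in the definition of a normal cone), for any $\gamma\in\widehat{\mathcal{C}}$ and any epimorphism $\varphi\in\mathcal{C}(c_\gamma,d)$ the assignment $\gamma\ast\varphi$ of \eqref{eqnbin0} is again a normal cone, with apex $d$. Since the epimorphic component $(\delta(c_\gamma))^\circ$ is, by Definition~\ref{normfact}, an epimorphism issuing from $c_\gamma$, closure is immediate: $\gamma\cdot\delta=\gamma\ast(\delta(c_\gamma))^\circ\in\widehat{\mathcal{C}}$, with apex the image of $\delta(c_\gamma)$. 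The two facts I would isolate are: (i) $(\gamma\ast\varphi)\ast\psi=\gamma\ast(\varphi\psi)$ whenever the composite of epimorphisms $\varphi\psi$ is defined; and (ii) for an epimorphism $e$ and an arbitrary morphism $h$ one has $(eh)^\circ=e\,h^\circ$, while $(h\iota)^\circ=h^\circ$ for any inclusion $\iota$. Both are proved by writing the relevant morphisms in their normal factorisations and using that a composite of epimorphisms is an epimorphism, that a composite of inclusions is an inclusion, and that an epimorphism coincides with its own epimorphic component.

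The heart of the argument is associativity, and this is where I expect the real work to lie. Writing $p=c_\gamma$ and $q=c_\delta$, I would use (i) to reduce both triple products to the shape $\gamma\ast(\text{epimorphism out of }p)$. On one side, with $s$ the apex of $\gamma\cdot\delta$ (that is, the image of $\delta(p)$),
\[
(\gamma\cdot\delta)\cdot\varepsilon=\bigl(\gamma\ast(\delta(p))^\circ\bigr)\ast(\varepsilon(s))^\circ=\gamma\ast\bigl[(\delta(p))^\circ(\varepsilon(s))^\circ\bigr];
\]
on the other side, since $(\delta\cdot\varepsilon)(p)=\delta(p)\,(\varepsilon(q))^\circ$,
\[
\gamma\cdot(\delta\cdot\varepsilon)=\gamma\ast\bigl(\delta(p)\,(\varepsilon(q))^\circ\bigr)^\circ.
\]
It therefore suffices to prove the single identity $(\delta(p))^\circ(\varepsilon(s))^\circ=\bigl(\delta(p)\,(\varepsilon(q))^\circ\bigr)^\circ$ between epimorphisms out of $p$. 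Writing $\delta(p)=(\delta(p))^\circ\,\iota(s,q)$ and invoking the cone compatibility $\varepsilon(s)=\iota(s,q)\,\varepsilon(q)$, repeated application of (ii) --- first to absorb the trailing inclusion inside $\varepsilon(q)$, then to pull the epimorphism $(\delta(p))^\circ$ out of the epimorphic component on the right --- collapses both sides to $(\delta(p))^\circ\bigl(\iota(s,q)\,(\varepsilon(q))^\circ\bigr)^\circ$. The bookkeeping with images and epimorphic components is the only genuinely delicate point; everything else is formal.

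For the idempotents, observe that $\mu\cdot\mu=\mu\ast(\mu(c_\mu))^\circ$ has apex the image of $\mu(c_\mu)$, so $\mu\cdot\mu=\mu$ forces that image to be $c_\mu$; then $\mu(c_\mu)$ is an epimorphism equal to its epimorphic component, and cancelling the isomorphism $\mu(c)$ in $\mu(c)\mu(c_\mu)=\mu(c)$ (for an object $c$ with $\mu(c)$ invertible, which exists by the definition of a normal cone) gives $\mu(c_\mu)=1_{c_\mu}$. Conversely $\mu(c_\mu)=1_{c_\mu}$ yields $\mu\cdot\mu=\mu\ast1_{c_\mu}=\mu$. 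Finally, for regularity I would fix, for a cone $\gamma$ with apex $d$, an object $c$ with $\gamma(c)\colon c\to d$ invertible, and --- using axiom (NC4) of Definition~\ref{dfnnormc} --- an idempotent cone $\epsilon$ with apex $d$ and $\epsilon(d)=1_d$, and then set $\gamma':=\epsilon\ast\gamma(c)^{-1}$, a normal cone with apex $c$. Short computations with the reductions above give $\gamma\cdot\gamma'=\gamma\ast\gamma(c)^{-1}$ and $\bigl(\gamma\ast\gamma(c)^{-1}\bigr)\cdot\gamma=\gamma$, whence $\gamma\cdot\gamma'\cdot\gamma=\gamma$; dually $\gamma'\cdot\gamma=\epsilon$ and $\epsilon\cdot\gamma'=\gamma'$, whence $\gamma'\cdot\gamma\cdot\gamma'=\gamma'$. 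Thus $\gamma'$ is an inverse of $\gamma$, and $\widehat{\mathcal{C}}$ is regular.
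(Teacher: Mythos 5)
Your proposal is correct. Note, however, that the paper offers no proof of this lemma at all: it is quoted verbatim from Nambooripad's treatise (\cite[Theorem I.2]{cross}), so there is nothing internal to compare against --- what you have done is supply the argument that the citation points to, and your route (closure via $\gamma\ast\varphi$; associativity reduced to the single identity $(\delta(p))^\circ(\varepsilon(s))^\circ=\bigl(\delta(p)\,(\varepsilon(q))^\circ\bigr)^\circ$ and settled by your two toolbox facts; the idempotent characterisation by cancelling an isomorphism component; regularity via the explicit inverse $\epsilon\ast\gamma(c)^{-1}$ built from (NC4)) is essentially the standard one in Nambooripad's development. Two small points worth making explicit if this were written out in full: your toolbox facts tacitly use that the epimorphic component $\varphi^\circ$ and the image are independent of the chosen normal factorisation --- this is legitimate here, since the very definition of the composition \eqref{eqnbin} presupposes it, but it is a nontrivial fact in a general normal category where normal factorisations themselves need not be unique (contrast Lemma \ref{lemnormfact}); and the assertion that an epimorphism equals its own epimorphic component rests on the observation that an inclusion which is an epimorphism must be an identity, which one checks from (NC2), condition (3) of Definition \ref{catsub}, and strictness of the preorder of inclusions.
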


The following characterisations of the preorders $\lel,\ler$ and the order $\leqslant$ in the semigroup $\widehat{\mathcal{C}}$ can be easily extracted from the discussion in \cite[Section III.2]{cross}.
\begin{lemma}\label{lemgc}
	Let $\vartheta,\nu$ be idempotents in the semigroup $\widehat{\mathcal{C}}$. Then
	\begin{enumerate}
		\item $\nu \lel\vartheta \text{ if and only if }c_{\nu}\subseteq c_{\vartheta}.$
		\item $\nu \ler\vartheta \text{ if and only if }\nu(c_{\vartheta})\text{ is an epimorphism such that }\nu=\vartheta\ast\nu(c_{\vartheta}).$
	\end{enumerate}
\end{lemma}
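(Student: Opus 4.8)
The plan is to translate both relations into statements about the multiplication~\eqref{eqnbin} on $\widehat{\mathcal{C}}$ and then read them off at the level of components. Two observations drive everything. First, by Lemma~\ref{lemrs}, the idempotency of $\vartheta$ and $\nu$ means precisely that $\vartheta(c_\vartheta)=1_{c_\vartheta}$ and $\nu(c_\nu)=1_{c_\nu}$. Second, by the defining formula~\eqref{eqnbin0} for $\ast$, the apex of a product $\gamma\cdot\delta=\gamma\ast(\delta(c_\gamma))^\circ$ is the codomain of the epimorphic component $(\delta(c_\gamma))^\circ$, that is, the image of the component $\delta(c_\gamma)$. Since the image of a morphism is always a subobject of its codomain and coincides with the codomain exactly when the morphism is an epimorphism, comparing apexes alone already carries most of the information I need.

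For part~(1), I would compute $\nu\cdot\vartheta=\nu\ast(\vartheta(c_\nu))^\circ$. If $c_\nu\subseteq c_\vartheta$, then the cone condition for $\vartheta$ together with $\vartheta(c_\vartheta)=1_{c_\vartheta}$ gives $\vartheta(c_\nu)=\iota(c_\nu,c_\vartheta)\vartheta(c_\vartheta)=\iota(c_\nu,c_\vartheta)$, an inclusion, whose epimorphic component is $1_{c_\nu}$; hence $\nu\cdot\vartheta=\nu\ast 1_{c_\nu}=\nu$, i.e.\ $\nu\lel\vartheta$. Conversely, if $\nu\lel\vartheta$, then $\nu=\nu\cdot\vartheta$ forces the apexes to coincide, so the image of $\vartheta(c_\nu)$ equals $c_\nu$; being a subobject of the codomain $c_\vartheta$, this yields $c_\nu\subseteq c_\vartheta$.

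For part~(2), I would instead compute $\vartheta\cdot\nu=\vartheta\ast(\nu(c_\vartheta))^\circ$, whose apex is the image of $\nu(c_\vartheta)$. If $\nu\ler\vartheta$, i.e.\ $\vartheta\cdot\nu=\nu$, then matching apexes shows that the image of $\nu(c_\vartheta)$ is its full codomain $c_\nu$, so $\nu(c_\vartheta)$ is an epimorphism; consequently $(\nu(c_\vartheta))^\circ=\nu(c_\vartheta)$ and the equality $\vartheta\cdot\nu=\nu$ reads $\vartheta\ast\nu(c_\vartheta)=\nu$. Conversely, once $\nu(c_\vartheta)$ is assumed to be an epimorphism we again have $(\nu(c_\vartheta))^\circ=\nu(c_\vartheta)$, so $\vartheta\cdot\nu=\vartheta\ast\nu(c_\vartheta)$, and the hypothesis $\vartheta\ast\nu(c_\vartheta)=\nu$ gives $\vartheta\cdot\nu=\nu$, that is $\nu\ler\vartheta$.

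The bookkeeping above is routine; the load-bearing auxiliary facts, which I would isolate before the main computation, concern images in a normal category: that the epimorphic component of an inclusion is an identity, and that a morphism is an epimorphism if and only if its image equals its codomain (equivalently, if and only if it coincides with its own epimorphic component). The first fact and the easy half of the second are immediate from the trivial normal factorisations $\iota=1\cdot 1\cdot\iota$ and $\varphi=\varphi^\circ\cdot 1$, together with the independence of the epimorphic component from the chosen normal factorisation---the very property that makes~\eqref{eqnbin} well defined. The step I expect to require genuine care is the converse: writing an epimorphism $\varphi$ as $\varphi^\circ\,\iota(d',d)$, one cancels $\varphi^\circ$ to see that $\iota(d',d)$ is itself an epimorphism, and then combines the splitting of inclusions with condition~(3) of Definition~\ref{catsub} to force the retraction of $\iota(d',d)$ to be an inclusion, whence $d'=d$ by strictness of the preorder of inclusions. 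With these facts in place, the apex-matching arguments of both parts go through without further ado.
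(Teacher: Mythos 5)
Your proof is correct. Note that the paper offers no proof of Lemma~\ref{lemgc} at all: it simply asserts that the characterisations ``can be easily extracted from the discussion in \cite[Section III.2]{cross}'', so your argument is a self-contained replacement for a citation rather than a variant of an in-paper proof. Your route---unwinding $\nu\lel\vartheta$ as $\nu\cdot\vartheta=\nu$ and $\nu\ler\vartheta$ as $\vartheta\cdot\nu=\nu$ via~\eqref{eqnbin}, then comparing apexes using the fact that the apex of $\gamma\ast(\delta(c_\gamma))^\circ$ is the image of $\delta(c_\gamma)$---is sound, and the two auxiliary facts you isolate are exactly the load-bearing ones: independence of the epimorphic component from the chosen normal factorisation (without which the multiplication~\eqref{eqnbin} is not even well defined), and the equivalence ``epimorphism $\iff$ image equals codomain'' in a category with subobjects whose inclusions split. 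Your handling of the delicate direction of the latter is complete: from $\varphi=\varphi^\circ\iota(d',d)$ one cancels to see $\iota(d',d)$ is an epimorphism, the splitting gives $\iota q=1_{d'}$, left-cancellation upgrades this to $q\iota=1_d$, Definition~\ref{catsub}(3) applied to $1_d=q\iota$ puts $q$ in the preorder of inclusions, and strictness forces $d'=d$. You also correctly invoke Lemma~\ref{lemrs} to get $\vartheta(c_\vartheta)=1_{c_\vartheta}$ in the forward direction of part~(1). The only caveat worth recording is that the uniqueness of the epimorphic component is itself a nontrivial property of normal categories that you, like the paper, ultimately import from \cite{cross}; since the semigroup operation on $\widehat{\mathcal{C}}$ presupposes it, relying on it here is legitimate, but your proof is ``self-contained'' only modulo that one imported fact.
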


\begin{lemma}\label{lemorder}
Let $\mu,\nu$ be idempotents in the semigroup $\widehat{\mathcal{C}}$. Then $\nu \leqslant \mu$ if and only if  $\nu(c_{\mu })$ is a retraction such that $\nu=\mu \ast\nu(c_{\mu }).$
\end{lemma}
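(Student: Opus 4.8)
The plan is to reduce Lemma~\ref{lemorder} to the two characterisations already recorded in Lemma~\ref{lemgc}, exploiting the fact that $\leqslant=\lel\cap\ler$. The key observation is that the single condition ``$\nu(c_\mu)$ is a retraction'' simultaneously encodes the subobject relation coming from $\lel$ and the epimorphism condition coming from $\ler$, so the whole argument amounts to intersecting the two statements of Lemma~\ref{lemgc} and matching the terminology.

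For the forward implication, I would suppose $\nu\leqslant\mu$, i.e.\ $\nu\lel\mu$ and $\nu\ler\mu$. By Lemma~\ref{lemgc}(1), the relation $\nu\lel\mu$ yields $c_\nu\subseteq c_\mu$, so that the inclusion $\iota(c_\nu,c_\mu)$ exists. By Lemma~\ref{lemgc}(2), the relation $\nu\ler\mu$ gives that $\nu(c_\mu)$ is an epimorphism with $\nu=\mu\ast\nu(c_\mu)$, and this second equation is already half of what we want. It remains to upgrade ``epimorphism'' to ``retraction''. For this I would invoke the defining property of a normal cone: since $c_\nu\subseteq c_\mu$, the first clause in the definition of a normal cone gives $\iota(c_\nu,c_\mu)\nu(c_\mu)=\nu(c_\nu)$. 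As $\nu$ is idempotent, Lemma~\ref{lemrs} yields $\nu(c_\nu)=1_{c_\nu}$, whence $\iota(c_\nu,c_\mu)\nu(c_\mu)=1_{c_\nu}$. By definition this means precisely that $\nu(c_\mu)$ is a retraction, completing this direction.

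For the converse, I would assume that $\nu(c_\mu)$ is a retraction with $\nu=\mu\ast\nu(c_\mu)$. By the definition of a retraction, its codomain $c_\nu$ is a subobject of its domain $c_\mu$, so $c_\nu\subseteq c_\mu$, and Lemma~\ref{lemgc}(1) gives $\nu\lel\mu$. Moreover, the splitting relation $\iota(c_\nu,c_\mu)\nu(c_\mu)=1_{c_\nu}$ shows that $\nu(c_\mu)$ is left-cancellative: if $\nu(c_\mu)\alpha=\nu(c_\mu)\beta$, then premultiplying by $\iota(c_\nu,c_\mu)$ gives $1_{c_\nu}\alpha=1_{c_\nu}\beta$, that is $\alpha=\beta$. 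Hence $\nu(c_\mu)$ is an epimorphism, and together with $\nu=\mu\ast\nu(c_\mu)$, Lemma~\ref{lemgc}(2) gives $\nu\ler\mu$. Combining the two preorders yields $\nu\leqslant\mu$.

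The only step carrying any content is the passage from epimorphism to retraction in the forward direction; this is exactly where the idempotency of $\nu$ enters, through $\nu(c_\nu)=1_{c_\nu}$, and it is the crux of the argument. Everything else is a direct assembly of Lemma~\ref{lemgc}, Lemma~\ref{lemrs}, and the definitions of inclusion, retraction, and normal cone, so I do not anticipate any genuine obstacle beyond keeping the variances of domain and codomain straight under the left-to-right composition convention.
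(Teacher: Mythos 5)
Your proof is correct. The paper gives no explicit proof of this lemma---like Lemma~\ref{lemgc}, it is presented as extractable from the discussion in Nambooripad's treatise---and your argument is exactly the intended one: intersect the two parts of Lemma~\ref{lemgc}, and pass between ``epimorphism'' and ``retraction'' via the cone axiom $\iota(c_\nu,c_\mu)\,\nu(c_\mu)=\nu(c_\nu)$ combined with $\nu(c_\nu)=1_{c_\nu}$ from Lemma~\ref{lemrs}.
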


\begin{proposition}
\label{prop:converse}
	Let $\mathcal{C}$ be an unambiguous category. The semigroup $\widehat{\mathcal{C}}$ of all normal cones in $\mathcal{C}$ is locally inverse.
\end{proposition}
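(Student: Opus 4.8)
The plan is to verify that $\widehat{\mathcal{C}}$ satisfies condition~(2) of Theorem~\ref{thmlocinv}. By Lemma~\ref{lemrs} the set $\widehat{\mathcal{C}}$ is already a regular semigroup, with idempotents precisely the cones $\mu$ satisfying $\mu(c_\mu)=1_{c_\mu}$, so it remains to take idempotents $\vartheta,\nu,\mu$ with $\vartheta\mathrel{(\mathscr{L}\cup\mathscr{R})}\nu$ and $\vartheta,\nu\in\omega(\mu)$ and to deduce $\vartheta=\nu$. Writing $a:=c_\mu$, I would first invoke Lemma~\ref{lemorder} on $\vartheta,\nu\leqslant\mu$ to learn that $\vartheta(a)$ and $\nu(a)$ are retractions and that $\vartheta=\mu\ast\vartheta(a)$, $\nu=\mu\ast\nu(a)$. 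In view of \eqref{eqnbin0}, the latter mean $\vartheta(x)=\mu(x)\vartheta(a)$ and $\nu(x)=\mu(x)\nu(a)$ for every object $x$; hence the whole task reduces to the single equality $\vartheta(a)=\nu(a)$, and for this it will suffice to establish that $\vartheta$ and $\nu$ share a common apex.

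The $\mathscr{L}$-case is the easy one. Here $\vartheta\lel\nu$ and $\nu\lel\vartheta$, so Lemma~\ref{lemgc}(1) gives $c_\vartheta\subseteq c_\nu$ and $c_\nu\subseteq c_\vartheta$; since the inclusions form a strict preorder, this forces $c_\vartheta=c_\nu$. Then $\vartheta(a)$ and $\nu(a)$ are two retractions of one and the same inclusion $\iota(c_\vartheta,a)$, so condition~(UC 2) — unique splitting — immediately yields $\vartheta(a)=\nu(a)$, and we are done in this case.

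The harder case is $\vartheta\mathrel{\mathscr{R}}\nu$, where a priori one should only expect the apexes to be \emph{isomorphic} rather than equal. The plan is to use Lemma~\ref{lemgc}(2) in both directions: from $\nu\ler\vartheta$ and $\vartheta\ler\nu$ one obtains that $\nu(c_\vartheta)$ and $\vartheta(c_\nu)$ are epimorphisms with $\nu=\vartheta\ast\nu(c_\vartheta)$ and $\vartheta=\nu\ast\vartheta(c_\nu)$. Evaluating at $a$ gives $\nu(a)=\vartheta(a)\,\nu(c_\vartheta)$ and $\vartheta(a)=\nu(a)\,\vartheta(c_\nu)$; substituting one into the other and cancelling the (split, hence left-cancellable) epimorphisms $\vartheta(a)$ and $\nu(a)$ shows $\nu(c_\vartheta)\vartheta(c_\nu)=1_{c_\vartheta}$ and $\vartheta(c_\nu)\nu(c_\vartheta)=1_{c_\nu}$, so that $\nu(c_\vartheta)$ and $\vartheta(c_\nu)$ are mutually inverse isomorphisms. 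Consequently $\nu(a)=\vartheta(a)\,\nu(c_\vartheta)\,1_{c_\nu}$ is a genuine normal factorisation of $\nu(a)$, with retraction $\vartheta(a)$, isomorphism $\nu(c_\vartheta)$ and inclusion $1_{c_\nu}$; but $\nu(a)$ is itself a retraction and so also admits the trivial normal factorisation $\nu(a)\,1_{c_\nu}\,1_{c_\nu}$.

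At this point condition~(UC 3) — uniqueness of the normal factorisation — forces the two factorisations to coincide: comparing coimages gives $c_\vartheta=c_\nu$ and comparing retraction parts gives $\vartheta(a)=\nu(a)$, whence $\vartheta=\nu$ as before. The single genuine obstacle is thus exactly this passage from an isomorphism of apexes to their literal equality in the $\mathscr{R}$-case, and it is precisely here that unambiguity is indispensable: the uniqueness clause of (UC 3), which fails in a general normal category, is what collapses the two factorisations and pins down $c_\vartheta=c_\nu$.
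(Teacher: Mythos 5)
Your proof is correct and follows essentially the same route as the paper's: the same reduction via Lemma~\ref{lemorder}, the same combination of Lemma~\ref{lemgc}(1) with (UC 2) in the $\mathscr{L}$-case, and in the $\mathscr{R}$-case the same extraction of mutually inverse isomorphisms (your cancellation of the split epimorphisms is exactly the paper's multiplication on the left by the inclusion) followed by comparing two normal factorisations of one retraction under (UC 3). No gaps.
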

\begin{proof}
By Lemma \ref{lemrs}, $\widehat{\mathcal{C}}$ is a regular semigroup. So, it suffices to show that $ \widehat{\mathcal{C}}$ satisfies condition (2) in Theorem \ref{thmlocinv}. Thus, let $\mu$, $\nu$, and $\vartheta$ be idempotent normal cones in $\mathcal{C}$ with apices $c$, $c_1$, and $c_2$, respectively, and $\nu,\vartheta\in\omega(\mu)$. By Lemma~\ref{lemorder}, there exist retractions $q_1:=\nu(c)\colon c\to c_1$ and $q_2:=\vartheta(c)\colon c\to c_{2}$ such that $\nu=\mu\ast q_1$ and $\vartheta=\mu\ast q_2$. We have to verify that $\nu=\vartheta$ whenever $\nu\mathrel{(\mathscr{L}\cup\mathscr{R})}\vartheta$.

First suppose that $\nu \mathrel{\mathscr{L}}\vartheta$. Then $c_1= c_2$ by Lemma~\ref{lemgc}(1). Since $\mathcal{C}$ is an unambiguous category, the inclusion $\iota({c_{1}},c)=\iota({c_{2}},c)$ splits uniquely. So, the retraction between $c$ and $c_{1}=c_{2}$ must be unique, and thus, $q_1=q_2$. Hence $\nu=\mu\ast q_1=\mu\ast q_2=\vartheta.$
	
Now, let $\nu \mathrel{\mathscr{R}}\vartheta$. By Lemma~\ref{lemgc}(2), we have $\nu=\vartheta\ast\nu(c_2)$. Equating the components at $c$, we have $\nu(c)=\vartheta(c)\nu(c_2)$, that is, $q_1=q_2\nu(c_2)$. Dually, we have $q_2=q_1\vartheta(c_1)$. Denote $u:=\vartheta(c_1)$ and $v:=\nu(c_2)$. Then, combining the equalities $q_1=q_2v$ and $q_2=q_1u$, we deduce $q_1=q_1uv$. Multiplying through on the left by the inclusion $\iota(c_1,c)$, we get $\iota(c_1,c)q_1=\iota(c_1,c)q_1uv$ whence $uv=1_{c_1}$ since $\iota(c_1,c)q_1=1_{c_1}$ by the definition of a retraction. Dually, we obtain $vu=1_{c_2}$. Thus, we see that $u$ is an isomorphism.

Observe that each of the decompositions $q_2=q_1u1_{c_2}$ and $q_2=q_21_{c_2}1_{c_2}$ constitutes a normal factorisation of the morphism $q_2$. But since $\mathcal{C}$ is an unambiguous category, every morphism has a unique normal factorisation. This implies that $q_1=q_2$ also in this case. Hence again $\nu=\mu\ast q_1=\mu\ast q_2=\vartheta.$
\end{proof}

Two unambiguous categories are said to be isomorphic if there is an inclusion preserving isomorphism between them. Our next theorem follows by restricting \cite[Theorem III.19]{cross} (which dealt with normal categories) to unambiguous categories.

\begin{thm}\label{thmls}
	Let $\mathcal{C}$ be an unambiguous category and $\widehat{\mathcal{C}}$ its associated locally inverse semigroup of normal cones. Define a functor $F\colon\mathcal{C}\to \mathbb{L}(\widehat{\mathcal{C}})$ as follows:
	\begin{align*}
	vF(c)&:= \widehat{\mathcal{C}}\mu &&\text{for }c\in v\mathcal{C},\\
	F(f)&:= \rho(\mu,\mu\ast f^\circ, \nu)&&\text{for }f\in\mathcal{C}(c,d),
	\end{align*}
	where $\mu,\nu\in E(\widehat{\mathcal{C}})$ are such that $c_\mu=c$ and $c_{\nu}=d$. Then $F$ is an isomorphism of unambiguous categories.
\end{thm}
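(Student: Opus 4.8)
The plan is to derive the statement as a specialisation of \cite[Theorem III.19]{cross}, which asserts that for every normal category $\mathcal{C}$ the functor $F$ defined as above is a well-defined, inclusion-preserving isomorphism onto $\mathbb{L}(\widehat{\mathcal{C}})$. Since an unambiguous category is in particular normal, that theorem applies verbatim and already delivers all the functorial content of $F$. The only work left is to check that the two categories joined by $F$ are both unambiguous, for then an inclusion-preserving isomorphism between them is exactly an isomorphism of unambiguous categories in the sense defined just above.

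That check is immediate: $\mathcal{C}$ is unambiguous by hypothesis, while $\widehat{\mathcal{C}}$ is locally inverse by Proposition~\ref{prop:converse}, so $\mathbb{L}(\widehat{\mathcal{C}})$ is unambiguous by Proposition~\ref{prop:unambigous}. Hence $F$ is an inclusion-preserving isomorphism between two unambiguous categories, which is precisely what is claimed. If one prefers a self-contained argument, the easy structural points can be recovered directly from the lemmas at hand: any two idempotent cones $\mu,\mu'$ with a common apex $c$ satisfy $\mu\lel\mu'$ and $\mu'\lel\mu$ by Lemma~\ref{lemgc}(1), hence $\widehat{\mathcal{C}}\mu=\widehat{\mathcal{C}}\mu'$, which makes $vF$ well-defined; and if $c'\subseteq c$ then $\mu'\lel\mu$, giving $\widehat{\mathcal{C}}\mu'\subseteq\widehat{\mathcal{C}}\mu$, so $F$ preserves inclusions.

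The substantive content thus lies entirely inside \cite[Theorem III.19]{cross}, and the step I expect to be the main obstacle---were one to reprove it rather than cite it---is the functoriality $F(fg)=F(f)F(g)$ for composable $f,g$. This forces a comparison between composition in $\mathbb{L}(\widehat{\mathcal{C}})$, where $\rho(\mu,s,\nu)\rho(\nu,t,\lambda)=\rho(\mu,st,\lambda)$, and the cone multiplication~\eqref{eqnbin} entering $\mu\ast(fg)^\circ$; the crux is that the epimorphic component of a composite transforms multiplicatively under the operation $\ast$, and it is here that the uniqueness supplied by (UC~3) does the real work. Well-definedness on morphisms, identity preservation (from $1_c^\circ=1_c$ and $\mu\ast 1_c=\mu$), and bijectivity are comparatively routine once functoriality is in place.
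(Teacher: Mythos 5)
Your proposal is correct and follows the same route as the paper: the paper offers no independent proof of Theorem~\ref{thmls} but states that it "follows by restricting \cite[Theorem III.19]{cross} (which dealt with normal categories) to unambiguous categories," with the surrounding text relying, exactly as you do, on Proposition~\ref{prop:converse} (so $\widehat{\mathcal{C}}$ is locally inverse) and Proposition~\ref{prop:unambigous} (so $\mathbb{L}(\widehat{\mathcal{C}})$ is unambiguous) to conclude that $F$ is an inclusion-preserving isomorphism between unambiguous categories. Your additional remarks on well-definedness via Lemma~\ref{lemgc}(1) and on where (UC~3) would enter a from-scratch proof are sound but not needed beyond the citation.
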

Proposition~\ref{prop:unambigous} shows that the category  $\mathbb{L}(S)$ of a locally inverse semigroup $S$ is unambiguous. Conversely, given an abstract unambiguous category $\mathcal{C}$, Proposition~\ref{prop:converse}
shows that its semigroup $\widehat{\mathcal{C}}$ is locally inverse, and  by Theorem~\ref{thmls}, the unambiguous category $\mathbb{L}(\widehat{\mathcal{C}})$ is isomorphic to $\mathcal{C}$. That is every unambiguous category arises as the category $\mathbb{L}(S)$ of some locally inverse semigroup $S$. Thus we arrive at the following corollary that completely characterises the category of principal left ideals of a locally inverse semigroup.
\begin{corollary}\label{corls}
	A category is isomorphic to the category $\mathbb{L}(S)$ for some locally inverse semigroup $S$ if and only if it is unambiguous.
\end{corollary}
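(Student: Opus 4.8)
The plan is to assemble Corollary~\ref{corls} purely from the three preceding results, treating it as a bookkeeping combination rather than a new argument. The statement is a biconditional, so I would handle the two directions separately and then observe that they exhaust the claim.

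For the forward direction, suppose a category $\mathcal{D}$ is isomorphic to $\mathbb{L}(S)$ for some locally inverse semigroup $S$. By Proposition~\ref{prop:unambigous}, the category $\mathbb{L}(S)$ is unambiguous. Since being unambiguous is plainly preserved under inclusion-preserving isomorphism of categories (conditions (UC1)--(UC4) are phrased entirely in terms of inclusions, retractions, isomorphisms, normal factorisations, and normal cones, all of which transport along such an isomorphism), it follows that $\mathcal{D}$ is unambiguous as well. The only mildly non-automatic point here is to note that the isomorphism in question is one of \emph{categories with subobjects}, i.e.\ it carries inclusions to inclusions; the definition of isomorphism of unambiguous categories given just before Theorem~\ref{thmls} already builds this in, so no extra work is needed.

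For the converse direction, suppose $\mathcal{C}$ is an unambiguous category. I would form its semigroup $\widehat{\mathcal{C}}$ of normal cones. By Proposition~\ref{prop:converse}, $\widehat{\mathcal{C}}$ is a locally inverse semigroup, so $\mathbb{L}(\widehat{\mathcal{C}})$ is a category of the required shape. Theorem~\ref{thmls} then supplies an isomorphism of unambiguous categories $F\colon\mathcal{C}\to\mathbb{L}(\widehat{\mathcal{C}})$, which exhibits $\mathcal{C}$ as isomorphic to $\mathbb{L}(S)$ with $S=\widehat{\mathcal{C}}$ locally inverse. This completes the converse.

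Combining the two directions yields the biconditional. There is essentially no hard part: all the substantive content has been discharged in Propositions~\ref{prop:unambigous} and~\ref{prop:converse} and in Theorem~\ref{thmls}. The one spot demanding a token of care is verifying that the forward implication really does only require invariance of the unambiguous-category axioms under isomorphism, rather than any appeal to the explicit functor $F$; I expect to dispose of this in a single sentence by remarking that each of (UC1)--(UC4) is an isomorphism-invariant property of a category with subobjects.
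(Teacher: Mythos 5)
Your proposal is correct and follows exactly the paper's own argument: the forward direction is Proposition~\ref{prop:unambigous}, and the converse combines Proposition~\ref{prop:converse} with the isomorphism $\mathcal{C}\cong\mathbb{L}(\widehat{\mathcal{C}})$ from Theorem~\ref{thmls}. Your extra remark that unambiguity is invariant under inclusion-preserving isomorphism is a point the paper leaves implicit, but it is the same proof.
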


Recall from \cite{clif} that the right regular representation of a semigroup $S$ is the homomorphism $\rho\colon a\mapsto\rho_a$ of $S$ into the full transformation semigroup on the set $S$. Denote the image of $\rho$ by $S_\rho$; then $\rho\colon S\to S_\rho$ is a surjective homomorphism. The next proposition follows directly from \cite[Theorem III.16]{cross}.
\begin{proposition}\label{prosr}
Let $S$ be a locally inverse semigroup. The map $a\mapsto\rho^a$ (where $\rho^a$ is the principal cone determined by $a$) defines a homomorphism $\tilde{\rho}\colon S\to \widehat{\mathbb{L}(S)}$. Also the map $\rho_a\mapsto\rho^a$ defines an injective homomorphism $\phi\colon S_\rho\to \widehat{\mathbb{L}(S)}$ such that the next diagram commutes.
	\begin{equation*}\label{}
	\xymatrixcolsep{2pc}\xymatrixrowsep{3pc}\xymatrix
	{
		&S \ar[rd]^{\tilde{\rho}}\ar[ld]_{{\rho}} 	& \\
		S_\rho\ar[rr]_{{\phi}}&& \widehat{\mathbb{L}(S)} }
	\end{equation*}
	In particular $S$ is isomorphic to a subsemigroup of $\widehat{\mathbb{L}(S)}$ via $\tilde{\rho}$ if and only if $\rho$ is injective.
\end{proposition}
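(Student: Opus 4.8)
The plan is to read this statement as the specialisation to locally inverse semigroups of Nambooripad's general result \cite[Theorem III.16]{cross}, whose essential content is that the assignment $a\mapsto\rho^a$ respects the multiplication of $S$. Accordingly I would verify the three constituent claims in turn — that $\tilde{\rho}$ is a homomorphism, that $\phi$ is a well-defined injective homomorphism, and that the triangle commutes — and then deduce the final equivalence formally.

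The core of the argument, and the only genuinely computational step, is to show that $\tilde{\rho}$ is a homomorphism, i.e.\ that $\rho^{ab}=\rho^a\cdot\rho^b$ for all $a,b\in S$. Here I would unwind the definition of the composition in \eqref{eqnbin}: $\rho^a\cdot\rho^b=\rho^a\ast\bigl(\rho^b(c_{\rho^a})\bigr)^\circ$, where the apex $c_{\rho^a}$ is $Sf$ with $f\mathrel{\mathscr{L}}a$. Evaluating $\rho^b$ at $Sf$, passing to its epimorphic component, and starring with $\rho^a$ as in \eqref{eqnbin0}, one compares the resulting cone componentwise with $\rho^{ab}$, whose value at $Se$ is $\rho(e,e(ab),h)$ for $h\mathrel{\mathscr{L}}ab$. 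The equality then falls out of the composition rule $\rho(e,u,f)\rho(g,v,h)=\rho(e,uv,h)$ together with the stated criterion for equality of the morphisms $\rho(\cdot,\cdot,\cdot)$. I expect this verification to be the main obstacle, as it requires keeping careful track of the various idempotent representatives; I would therefore simply invoke \cite[Theorem III.16]{cross} for it, noting that it uses nothing about local inverseness and is valid for every regular semigroup.

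With the homomorphism property in hand, the remaining claims are routine. For well-definedness of $\phi$ I would observe that $\rho_a=\rho_b$ forces $xa=xb$ for all $x\in S$; in particular $ea=eb$ for every $e\in E(S)$, and moreover $a\mathrel{\mathscr{L}}b$ (for an inverse $a'$ of $a$ we have $a'a=a'b$, whence $a=a(a'a)=a(a'b)\in Sb$, and symmetrically $b\in Sa$). Feeding these two facts into the equality criterion shows $\rho^a(Se)=\rho^b(Se)$ for every object $Se$, i.e.\ $\rho^a=\rho^b$. That $\phi$ is a homomorphism is then immediate from $\rho_a\rho_b=\rho_{ab}$ and the identity $\rho^{ab}=\rho^a\cdot\rho^b$. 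For injectivity of $\phi$ I would run the converse: if $\rho^a=\rho^b$, then reading the equality criterion at each object $Se$ yields $ea=eb$ for all $e\in E(S)$, and since every $x\in S$ can be written $x=x(x'x)$ with $x'x$ idempotent, this upgrades to $xa=xb$ for all $x$, i.e.\ $\rho_a=\rho_b$.

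Finally, the triangle commutes by definition, since $\rho\phi(a)=\phi(\rho_a)=\rho^a=\tilde{\rho}(a)$, so $\tilde{\rho}=\rho\phi$. The concluding equivalence is then purely formal: because $\phi$ is injective, $\tilde{\rho}=\rho\phi$ is injective if and only if $\rho$ is, and $S$ is isomorphic to a subsemigroup of $\widehat{\mathbb{L}(S)}$ via $\tilde{\rho}$ precisely when $\tilde{\rho}$ is injective.
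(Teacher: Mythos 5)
Your proposal is correct and matches the paper's approach: the paper gives no separate proof, stating only that the proposition ``follows directly from \cite[Theorem III.16]{cross}'', which is precisely the general regular-semigroup result you invoke for the core identity $\rho^{ab}=\rho^a\cdot\rho^b$. The additional verifications you supply (well-definedness and injectivity of $\phi$ via the equality criterion for the morphisms $\rho(\cdot,\cdot,\cdot)$, commutativity of the triangle, and the formal final equivalence) are sound and simply spell out what the citation absorbs.
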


\begin{rmk}\label{rmkdual}
	Dually, we can define the unambiguous category $\mathbb{R}(S)$ of principal right ideals of a locally inverse semigroup $S$ as follows:
	\begin{align*}
		v\mathbb{R}(S)&: = \{ eS : e \in E(S) \},\\
		\mathbb{R}(S)(eS,fS)&: = \{  \lambda(e,u,f) : u\in fSe \},
	\end{align*}
where the partial left translation $\lambda(e,u,f)$ sends $x\in eS$ to $ux\in fS$. All notions defined for the category $\mathbb{L}(S)$ have their natural `duals' in the category $\mathbb{R}(S)$; in particular, $\lambda^a$ stands for the principal cone in $\mathbb{R}(S)$ determined by $a\in S$.
\end{rmk}

\subsection{Unambiguous dual and cross-connections}\label{ssecdual}

We have seen in the previous sections that given a locally inverse semigroup $S$, the categories $\mathbb{L}(S)$ and $\mathbb{R}(S)$ are unambiguous categories. In this section, we address the converse question: given two unambiguous categories $\mathcal{C}$ and $\mathcal{D}$, whether a locally inverse semigroup $S$ can be constructed such that $\mathcal{C}$ and $\mathcal{D}$ are isomorphic to $\mathbb{L}(S)$ and $\mathbb{R}(S)$, respectively. To answer this question, we first need to capture the relationship between the unambiguous categories $\mathbb{L}(S)$ and $\mathbb{R}(S)$, in a categorical framework. This is done via the notion of cross-connection which describes the relationship between the categories $\mathbb{L}(S)$ and $\mathbb{R}(S)$ using a pair of functors.

To this end, first recall that for a given normal category $\mathcal{C}$, its normal dual $\mathcal{C}^*$ is defined as a full subcategory of the functor category $[\mathcal{C},\mathbf{Set}] $ such that the objects of $\mathcal{C}^*$ are certain special set valued functors called $H$-functors.

Let $\mu$ be an idempotent cone in a normal category $\mathcal{C}$. Then we define an $H$-functor $H(\mu;-)\colon \mathcal{C}\to \mathbf{Set}$ as follows: for $c\in v\mathcal{C}$ and $g\in\mathcal{C}(c,d)$,
\begin{equation} \label{eqnH}
\begin{split}
&H({\mu};{c}):= \{\mu\ast f^\circ : f \in \mathcal{C}(c_{\mu},c)\}\\
&H({\mu};{g}) \colon H({\mu};{c})\to H({\mu};{d}) \text{ is given by }\mu\ast f^\circ \mapsto \mu\ast (fg)^\circ.
\end{split}
\end{equation}
It can be shown \cite[Lemma III.6]{cross} that the functor $H(\mu;-)$ is representable, that is, there exists an associated natural isomorphism $\eta_\mu\colon H(\mu;-) \to \mathcal{C}(c_\mu,-)$ where $\mathcal{C}(c_\mu,-)$ is the covariant hom-functor determined by the object $c_\mu$.

Now we proceed to characterise the normal dual associated with an unambiguous category. Given an unambiguous category $\mathcal{C}$, we define the \emph{unambiguous dual} $\mathcal{C}^*$ (often referred to as just \emph{dual} below) as the full subcategory of $[\mathcal{C},\mathbf{Set}] $ such that
$$v\mathcal{C}^*:=\{H(\mu;-):\mu\in E(\widehat{\mathcal{C}})\}.$$
Hence the morphisms in $\mathcal{C}^*$ are natural transformations between the $H$-functors. Then  \cite[Theorem III.25]{cross} leads us to the following theorem.
\begin{thm}\label{thmdualrs}
Let $\mathcal{C}$ be an unambiguous category with the dual $\mathcal{C}^*$. The unambiguous category $\mathbb{R}(\widehat{\mathcal{C}})$  is isomorphic to $\mathcal{C}^*$. In particular, the unambiguous dual $\mathcal{C}^*$ is also an unambiguous category.
\end{thm}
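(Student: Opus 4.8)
The plan is to recognise that the unambiguous dual $\mathcal{C}^*$, as defined above, is precisely Nambooripad's normal dual of the underlying normal category $\mathcal{C}$, so that the asserted isomorphism is nothing but the restriction of the general cross-connection duality \cite[Theorem III.25]{cross} to the unambiguous setting. Accordingly, I would split the argument into two tasks: first, exhibit (or quote) the isomorphism $\mathbb{R}(\widehat{\mathcal{C}})\to\mathcal{C}^*$; second, deduce that $\mathcal{C}^*$ is itself unambiguous.

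For the first task I would spell out the comparison functor on objects and morphisms. Since $\widehat{\mathcal{C}}$ is a regular semigroup (Lemma~\ref{lemrs}), the dual description in Remark~\ref{rmkdual} shows that the objects of $\mathbb{R}(\widehat{\mathcal{C}})$ are the sets $\mu\widehat{\mathcal{C}}$ with $\mu\in E(\widehat{\mathcal{C}})$, and the morphisms are the partial left translations $\lambda(\mu,\xi,\nu)$. The candidate isomorphism $G\colon\mathbb{R}(\widehat{\mathcal{C}})\to\mathcal{C}^*$ sends $\mu\widehat{\mathcal{C}}$ to the $H$-functor $H(\mu;-)$ and sends $\lambda(\mu,\xi,\nu)$ to the natural transformation $H(\mu;-)\to H(\nu;-)$ induced by $\xi$ through the representing isomorphisms $\eta_\mu,\eta_\nu$ of \cite[Lemma III.6]{cross}. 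I would then verify that $G$ is well defined on objects (that $H(\mu;-)$ depends only on the right ideal $\mu\widehat{\mathcal{C}}$, i.e. only on the $\mathscr{R}$-class of $\mu$), bijective on objects, and full and faithful. All of these are exactly the statements proved in \cite[Theorem III.25]{cross} for arbitrary normal categories, and none of them uses more than the regularity of $\widehat{\mathcal{C}}$; so at this stage nothing beyond a careful transcription of the general theorem is required.

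The second task is where the unambiguous hypothesis enters, and it carries the genuinely new content. Because $\mathcal{C}$ is unambiguous, Proposition~\ref{prop:converse} gives that $\widehat{\mathcal{C}}$ is locally inverse. Applying the left--right dual of Proposition~\ref{prop:unambigous} (recorded in Remark~\ref{rmkdual}), the category $\mathbb{R}(\widehat{\mathcal{C}})$ of principal right ideals of the locally inverse semigroup $\widehat{\mathcal{C}}$ is unambiguous. Since the isomorphism $G$ produced above is inclusion-preserving, it transports the subobject structure, the unique splittings of inclusions, and the unique normal factorisations of $\mathbb{R}(\widehat{\mathcal{C}})$ onto $\mathcal{C}^*$; hence $\mathcal{C}^*$ inherits axioms (UC1)--(UC4) and is unambiguous, which proves the ``in particular'' clause.

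The main obstacle, in my view, is not a hard computation but the bookkeeping needed to confirm that the definition of the unambiguous dual adopted here coincides on the nose with the object- and morphism-level description of the normal dual in \cite{cross}. Concretely, I must check that every object of $\mathcal{C}^*$ really is an $H$-functor $H(\mu;-)$ arising from an idempotent cone --- guaranteed by (UC4), which supplies at each apex an idempotent cone --- and that $\mathcal{C}^*$ is taken as a full subcategory of $[\mathcal{C},\mathbf{Set}]$ with the subobject structure that makes $G$ inclusion-preserving, matching Nambooripad's convention. Once this identification is in place, the isomorphism $\mathbb{R}(\widehat{\mathcal{C}})\cong\mathcal{C}^*$ is inherited verbatim from \cite[Theorem III.25]{cross}, and unambiguity of $\mathcal{C}^*$ follows formally from that of $\mathbb{R}(\widehat{\mathcal{C}})$.
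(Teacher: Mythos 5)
Your proposal is correct and follows essentially the same route as the paper: the paper also obtains the isomorphism $\mathbb{R}(\widehat{\mathcal{C}})\cong\mathcal{C}^*$ by restricting Nambooripad's \cite[Theorem III.25]{cross} to the unambiguous setting, and the unambiguity of $\mathcal{C}^*$ then follows exactly as you argue, from Proposition~\ref{prop:converse} (so $\widehat{\mathcal{C}}$ is locally inverse) together with the dual of Proposition~\ref{prop:unambigous} and transport along the inclusion-preserving isomorphism. Your write-up merely makes explicit the bookkeeping that the paper leaves implicit in its one-line citation.
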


Now, we proceed to describe how the unambiguous categories $\mathbb{L}(S)$ and $\mathbb{R}(S)$ arising from the same locally inverse semigroup $S$ are interrelated. This interrelation is captured by  a pair of functors $\Gamma_S$ and $\Delta_S$. The functor $\Gamma_S\colon  \mathbb{R}(S) \to \mathbb{L}(S)^*$ is defined as follows: for each $eS\in v\mathbb{R}(S)$ and for each morphism $ \lambda(e,u,f)\in \mathbb{R}(S)$,
\begin{equation} \label{eqngs}
v\Gamma_S(eS):= H(\rho^e;-) \quad\text{ and }\quad \Gamma_S(\lambda(e,u,f)):= \eta_{\rho^e}\mathbb{L}(S)(\rho(f,u,e),-)\eta_{\rho^f}^{-1}
\end{equation}
where $\eta_{\rho^e}$ is the natural isomorphism associated with the $H$-functor $H(\rho^e;-)$. Similarly,  we define $\Delta_S\colon  \mathbb{L}(S) \to \mathbb{R}(S)^*$ as follows: for each $Se\in v\mathbb{L}(S)$ and for each morphism $\rho(e,u,f)\in \mathbb{L}(S)$,
\begin{equation}\label{eqnds}
v\Delta_S(Se):= H( \lambda^e;-) \quad\text{ and }\quad \Delta_S(\rho(e,u,f)):= \eta_{ \lambda^e}\mathbb{R}(S)( \lambda(f,u,e),-)\eta_{\lambda^f}^{-1}.
\end{equation}
As in \cite[Theorem IV.2]{cross}, we can prove that $\Gamma_S$ and $\Delta_S$ are well defined covariant functors which are inclusion preserving and fully faithful. Moreover, for each $eS\in v\mathbb{R}(S)$, the restriction
of $\Gamma_S$ to the full subcategory of $\mathbb{R}(S)$ whose objects are the principal right ideals contained in $eS$ is an isomorphism; a similar property holds for $\Delta_S$ and each $Se\in v\mathbb{L}(S)$.

The latter observation motivates the notion of a local isomorphism. An \emph{ideal} $ (  c  ) $ of an unambiguous category $\mathcal{C}$ is the full subcategory of $\mathcal{C}$ with objects
$$v (  c  ):=\{d\in v\mathcal{C}: d\subseteq c\}.$$
\begin{dfn}\label{dfnlociso}
A functor $F$ between two unambiguous categories $\mathcal{C}$ and $\mathcal{D}$ is said to be a \emph{local isomorphism} if $F$ is inclusion preserving, fully faithful and for each $c \in v\mathcal{C}$, $F_{| (  c  ) }$ is an isomorphism of the ideal $ (  c  ) $ onto $  (  F(c)  )  $.
\end{dfn}
Thus, $\Gamma_S$ and $\Delta_S$ are local isomorphisms.

To describe the relationship between the local isomorphisms $\Gamma_S$ and $\Delta_S$, we need to employ the notion of the $M$-set $MH(\mu;-)$ of an $H$-functor $H(\mu;-)$ in an unambiguous category $\mathcal{C}$. It is defined as follows:
\begin{equation}\label{eqnms}
MH(\mu;-):= \{ c\in v\mathcal{C}: \mu(c) \text{ is an isomorphism} \}.
\end{equation}
It can be seen that for objects $Se\in v\mathbb{L}(S)$ and $eS\in v\mathbb{R}(S)$,
\begin{equation}
Se \in M\Gamma_S(eS) \text{ if and only if } eS\in M\Delta_S(Se).
\end{equation}
Here the $M$-set $M\Gamma_S(eS)$ is $MH(\rho^e;-)$ according to \eqref{eqngs}; similarly, $M\Delta_S(Se)=MH( \lambda^e;-)$ by \eqref{eqnds}.

The above leads us to the definition of a cross-connection.
\begin{dfn} \label{ccxn}
	Let $\mathcal{C}$ and $\mathcal{D}$ be unambiguous categories. A \emph{cross-connection} between $\mathcal{C}$ and $\mathcal{D}$ is a quadruplet $(\mathcal{C},\mathcal{D};{\Gamma},\Delta)$ where $\Gamma\colon  \mathcal{D} \to \mathcal{C}^*$ and $\Delta\colon  \mathcal{C} \to \mathcal{D}^*$ are local isomorphisms such that for $c \in v\mathcal{C}$ and $d \in v\mathcal{D}$,
	\begin{equation}\label{eqncxnms}
	c \in M\Gamma(d) \iff d\in M\Delta(c).
	\end{equation}
\end{dfn}

Summarising the above discussion, we have proved the following theorem.
\begin{thm}\label{thmcxns}
	Let $S$ be a locally inverse semigroup with unambiguous categories $\mathbb{L}(S)$ and $\mathbb{R}(S)$. Define functors $\Gamma_S$ and $\Delta_S$ as in \eqref{eqngs} and \eqref{eqnds}. Then $\Omega S= (\mathbb{L}(S),\mathbb{R}(S);\Gamma_S,\Delta_S)$ is a cross-connection between  $\mathbb{L}(S)$ and $\mathbb{R}(S)$.
\end{thm}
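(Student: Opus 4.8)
The statement collects the facts assembled in the preceding subsection, so the plan is to verify, one by one, the three requirements imposed on a cross-connection by Definition~\ref{ccxn}: that $\mathbb{L}(S)$ and $\mathbb{R}(S)$ are unambiguous categories, that $\Gamma_S\colon\mathbb{R}(S)\to\mathbb{L}(S)^*$ and $\Delta_S\colon\mathbb{L}(S)\to\mathbb{R}(S)^*$ are local isomorphisms, and that the $M$-set compatibility \eqref{eqncxnms} holds for every pair of objects.

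The first two requirements are already in hand. That $\mathbb{L}(S)$ is unambiguous is Proposition~\ref{prop:unambigous}, and the dual reasoning indicated in Remark~\ref{rmkdual} gives the same for $\mathbb{R}(S)$. For the local isomorphism property I would appeal to the analogue of \cite[Theorem IV.2]{cross}: it yields that $\Gamma_S$ and $\Delta_S$ are well defined (in particular $H(\rho^e;-)$ depends only on the object $eS$ and not on the chosen idempotent $e$), inclusion preserving, fully faithful covariant functors whose restriction to each ideal is an isomorphism onto the corresponding ideal of the dual; by Definition~\ref{dfnlociso} this is exactly the assertion that they are local isomorphisms. The codomains are correct since $H(\rho^e;-)$ and $H(\lambda^e;-)$ are $H$-functors attached to the idempotent cones $\rho^e$ and $\lambda^e$, hence genuine objects of $\mathbb{L}(S)^*$ and $\mathbb{R}(S)^*$.

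The substantive step is the compatibility \eqref{eqncxnms}, which I would establish by computing both $M$-sets explicitly and observing that they are controlled by one and the same condition on the underlying idempotents. Fix $Se\in v\mathbb{L}(S)$ and $fS\in v\mathbb{R}(S)$. By \eqref{eqngs} and \eqref{eqnms}, $Se\in M\Gamma_S(fS)$ means that the component $\rho^f(Se)=\rho(e,ef,f)$ of the principal cone is an isomorphism in $\mathbb{L}(S)$; using the composition rule recalled in Section~\ref{ssrs} one checks that a morphism $\rho(e,u,f)$ is invertible precisely when $u\mathrel{\mathscr{R}}e$ and $u\mathrel{\mathscr{L}}f$, so for $u=ef$ the condition becomes $ef\mathrel{\mathscr{R}}e$ and $ef\mathrel{\mathscr{L}}f$. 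Dually, by \eqref{eqnds}, $fS\in M\Delta_S(Se)$ means $\lambda^e(fS)=\lambda(f,ef,e)$ is an isomorphism in $\mathbb{R}(S)$, which by the mirror criterion happens exactly when $ef\mathrel{\mathscr{L}}f$ and $ef\mathrel{\mathscr{R}}e$. The two conditions coincide, so \eqref{eqncxnms} holds, and together with the first two requirements this shows that $\Omega S$ is a cross-connection.

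The whole difficulty is therefore concentrated in this last computation, and the point that makes it succeed cleanly is that both membership conditions collapse to the single symmetric requirement that the product $ef$ lie simultaneously in the $\mathscr{R}$-class of $e$ and the $\mathscr{L}$-class of $f$; the asymmetry between left and right is exactly absorbed by passing from $\rho^f(Se)$ to its mirror $\lambda^e(fS)$. The one place demanding care is the isomorphism criterion for morphisms of $\mathbb{L}(S)$, namely that $\rho(e,u,f)$ is invertible iff $u\mathrel{\mathscr{R}}e$ and $u\mathrel{\mathscr{L}}f$; since the entire argument rests on it, I would verify it directly from $\rho(e,u,f)\rho(f,u',e)=\rho(e,uu',e)$ together with the regularity of $S$, noting that this criterion does not require local inverseness and so uses no more than the ambient regular structure.
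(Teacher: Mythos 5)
Your proposal is correct and takes essentially the same route as the paper, whose own ``proof'' is just the sentence ``Summarising the above discussion\ldots'': unambiguity of $\mathbb{L}(S)$ and $\mathbb{R}(S)$ from Proposition~\ref{prop:unambigous} and Remark~\ref{rmkdual}, the local isomorphism property of $\Gamma_S$ and $\Delta_S$ by appeal to \cite[Theorem IV.2]{cross}, and the $M$-set compatibility \eqref{eqncxnms}, which the paper merely asserts with ``it can be seen that.'' Your explicit computation --- reducing both $Se \in M\Gamma_S(fS)$ and $fS \in M\Delta_S(Se)$ to the single symmetric condition $ef \mathrel{\mathscr{R}} e$ and $ef \mathrel{\mathscr{L}} f$ via the isomorphism criterion for $\rho(e,u,f)$, valid in any regular semigroup --- correctly supplies the one step the paper leaves unverified.
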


\subsection{Locally inverse semigroup of a cross-connection}\label{seccxn}

We have shown how a locally inverse semigroup gives rise to a cross-connection of two unambiguous categories. Now, we describe the converse construction. We build the locally inverse semigroup arising from a cross-connection between two unambiguous categories. Recall from Section \ref{ssuac} that given an unambiguous category, we have an associated locally inverse semigroup. We will be identifying the required locally inverse semigroup associated with a cross-connection as a subdirect product of the locally inverse semigroups arising from the two unambiguous categories, i.e., as a semigroup of pairs of normal cones which `respect' the cross-connection.

First, observe that given a cross-connection $\Omega=(\mathcal{C},\mathcal{D};{\Gamma},\Delta)$, by a well-known category isomorphism \cite{mac}
$$[\mathcal{C},[\mathcal{D},\mathbf{Set}]]\cong [\mathcal{C}\times\mathcal{D},\mathbf{Set}],$$
we obtain two bifunctors $\Gamma(-,-)$ and $\Delta(-,-)$ from $\mathcal{C}\times\mathcal{D}$ to $\mathbf{Set}$.

Now, given a cross-connection $\Omega=(\mathcal{C},\mathcal{D};{\Gamma},\Delta)$, the set
\begin{equation}\label{eqneo}
E_\Omega=\{ (c,d)\in v\mathcal{C}\times v\mathcal{D} : c\in M\Gamma(d)\}
\end{equation}
is the regular biordered set associated with the cross-connection $\Omega$ \cite{cross}. We show later that the set $E_\Omega$ is in fact a pseudo-semilattice.

Recall that for each $(c,d)\in E_\Omega$, there is a uniquely defined idempotent cone $\gamma(c,d)$  in the unambiguous category $\mathcal{C}$ such that
\begin{equation}\label{eqngcd}
c_{\gamma(c,d)}=c\text{ and }H(\gamma(c,d);-)=\Gamma(d).
\end{equation}
Similarly, for each pair $(c,d)\in E_\Omega$, we have a unique idempotent cone $\delta(c,d)$ in ${\mathcal{D}}$ such that
\begin{equation}
c_{\delta(c,d)}=d\text{ and }H(\delta(c,d);-)=\Delta(c).
\end{equation}

Given a cross-connection $\Omega=(\mathcal{C},\mathcal{D};{\Gamma},\Delta)$, $(c',d),(c,d') \in E_\Omega$, $f\in \mathcal{C}(c',c)$ and $g\in \mathcal{D}(d',d)$, the morphism $g$ is called the \emph{transpose} of $f$ if the morphisms $f$ and $g$ make the following diagram commute:
\begin{equation*}\label{}
\xymatrixcolsep{3pc}\xymatrixrowsep{4pc}\xymatrix
{
	c'\ar[d]_{f}&\Delta(c') \ar[d]_{\Delta(f)}\ar[rr]^{\eta_{\delta(c',d)}}   	&& \mathcal{D}(d,-)\ar[d]^{\mathcal{D}(g,-)}&d  \\
	c& \Delta(c) \ar[rr]^{\eta_{\delta(c,d')}} && \mathcal{D}(d',-)&d'\ar[u]_{g}
}
\end{equation*}

The transpose $g\in \mathcal{D}(d',d)$ is unique for a given pair of elements in $E_\Omega$. The transpose of $f\in\mathcal{C}(c',c)$ will be denoted by $f^\dagger$ in the sequel. Then we have the following theorem which is a consequence of \cite[Theorem IV.16]{cross}, in the notation introduced above.
\begin{thm}\label{thmnatiso}
Given unambiguous categories $\mathcal{C}$ and $\mathcal{D}$ and a cross-connection $\Omega=(\mathcal{C},\mathcal{D};{\Gamma},\Delta)$ with associated bifunctors $\Gamma(-,-)$ and $\Delta(-,-)$, for each $(c,d)\in v\mathcal{C}\times v\mathcal{D}$, the map $\chi(c,d)\colon\Gamma(c,d)\to \Delta(c,d)$ given by
\begin{equation*}
\chi(c,d)\colon \gamma(c',d)\ast f^\circ \mapsto \delta(c,d')\ast (f^\dagger)^\circ
\end{equation*}
is a bijection, where  $c' \in M\Gamma(d) \text{ and }  d' \in M\Delta(c)$ and $f^\dagger\in \mathcal{D}(d',d)$ is the transpose of the morphism $f\in \mathcal{C}(c',c)$. Also the map $(c,d)\mapsto\chi(c,d)$ defines a natural isomorphism between the bifunctors $\Gamma(-,-)$ and $\Delta(-,-)$.
\end{thm}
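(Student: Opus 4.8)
The plan is to read this statement as the specialisation to unambiguous categories of Nambooripad's \cite[Theorem IV.16]{cross}: since an unambiguous category is in particular normal and a cross-connection of unambiguous categories is a cross-connection of normal categories, all of his machinery applies, and the only genuine task is to check that the stated formula for $\chi(c,d)$ is well defined and natural, with the uniqueness axioms (UC2), (UC3) doing the work that Nambooripad carried out by making choices. I would begin by pinning down the two sets. Unwinding the bifunctor identification $[\mathcal{C},[\mathcal{D},\mathbf{Set}]]\cong[\mathcal{C}\times\mathcal{D},\mathbf{Set}]$ together with \eqref{eqngcd} and the definition \eqref{eqnH} of an $H$-functor gives
$$\Gamma(c,d)=H(\gamma(c',d);c)=\{\gamma(c',d)\ast f^\circ : f\in\mathcal{C}(c',c)\}$$
for any apex $c'\in M\Gamma(d)$, and dually $\Delta(c,d)=\{\delta(c,d')\ast g^\circ : g\in\mathcal{D}(d',d)\}$ for any $d'\in M\Delta(c)$. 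Thus every element of $\Gamma(c,d)$ has the form to which $\chi(c,d)$ is applied, and since $f^\dagger\in\mathcal{D}(d',d)$ the image $\delta(c,d')\ast(f^\dagger)^\circ$ indeed lies in $\Delta(c,d)$.

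The substantial step is well-definedness: the value must be independent of how the element $\xi\in\Gamma(c,d)$ is presented, that is, of the representative $f$, of the auxiliary apex $c'\in M\Gamma(d)$ and of the auxiliary apex $d'\in M\Delta(c)$. For a fixed $c'$, the representability isomorphism $\eta_{\gamma(c',d)}$ identifies $H(\gamma(c',d);c)$ with $\mathcal{C}(c',c)$, so $\xi$ determines $f$ and hence $f^\circ$; the defining commutative square of the transpose, read through $\eta_{\delta(c',d)}$ and $\eta_{\delta(c,d')}$, then forces $(f^\dagger)^\circ$ to be determined by $f^\circ$ in a matching way. Here the unique normal factorisation (UC3) in $\mathcal{D}$ guarantees that morphisms with equal epimorphic components have transposes with equal epimorphic components, so the image cone is unaffected. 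Independence of the apices $c'$ and $d'$ follows from the uniqueness of the transpose for each pair in $E_\Omega$ together with the fact that the various $\gamma(c',d)$ (respectively $\delta(c,d')$) all represent the single functor $\Gamma(d)$ (respectively $\Delta(c)$), so changing apex changes presentation but not the underlying element.

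For bijectivity I would exhibit an explicit inverse. The transpose construction is symmetric: interchanging the roles of $\mathcal{C},\mathcal{D}$ and of $\Gamma,\Delta$ assigns to each $g\in\mathcal{D}(d',d)$ a transpose $g^\dagger\in\mathcal{C}(c',c)$, and the key reciprocity lemma is $(f^\dagger)^\dagger=f$, obtained by stacking the two defining squares and invoking uniqueness of the transpose. Granting this, the map $\delta(c,d')\ast g^\circ\mapsto\gamma(c',d)\ast(g^\dagger)^\circ$ is a two-sided inverse of $\chi(c,d)$, so each $\chi(c,d)$ is a bijection.

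Finally, for naturality I would verify that for every $h\in\mathcal{C}(c_1,c_2)$ and every $k\in\mathcal{D}(d_1,d_2)$ the squares
$$\chi(c_1,d)\,\Delta(h,d)=\Gamma(h,d)\,\chi(c_2,d),\qquad \chi(c,d_1)\,\Delta(c,k)=\Gamma(c,k)\,\chi(c,d_2)$$
commute. Tracing $\gamma(c',d)\ast f^\circ$ around either square and applying \eqref{eqnH} reduces both sides to comparing the epimorphic component of a composite with that of its transpose, so everything collapses to a single compatibility statement, namely that transposition commutes with composition on each side. I expect this compatibility to be the main obstacle; it is precisely the content distilled from \cite[Theorem IV.16]{cross}, and in the unambiguous setting it is cleaner than in Nambooripad's general treatment because each morphism has exactly one normal factorisation, so there are no choices to reconcile.
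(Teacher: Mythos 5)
Your proposal is correct and follows essentially the same route as the paper: the paper offers no independent argument but states the theorem as a direct consequence of Nambooripad's Theorem IV.16 applied to unambiguous categories (which are in particular normal), exactly the specialisation you describe. Your additional sketches of well-definedness, the transpose reciprocity $(f^\dagger)^\dagger=f$, and naturality are a consistent fleshing-out of what that citation packages, not a different method.
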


Now, as in \cite[Section IV.5.1]{cross}, we obtain the following regular subsemigroups of the semigroups  $\widehat{\mathcal{C}}$ and $\widehat{\mathcal{D}}$ from the bifunctors $\Gamma(-,-)$ and $\Delta(-,-)$, respectively.
\begin{subequations}\label{eqnug}
\begin{align}
	\widehat{\Gamma} = & \bigcup\: \{  \Gamma(c,d) : (c,d) \in v\mathcal{C} \times v\mathcal{D} \} \\
	\widehat{\Delta} = & \bigcup\: \{  \Delta(c,d) : (c,d) \in v\mathcal{C} \times v\mathcal{D} \}
\end{align}
\end{subequations}
Since $\widehat{\Gamma}$ and $\widehat{\Delta}$ are regular subsemigroups of the locally inverse semigroups  $\widehat{\mathcal{C}}$ and $\widehat{\mathcal{D}}$, respectively,  $\widehat{\Gamma}$ and $\widehat{\Delta}$ are  locally inverse semigroups.

Then we can see that a normal cone $\gamma$ belongs to $\widehat{\Gamma}$ if and only if $\gamma=\gamma(c_1,d_1)\ast u$, where $u$ is an isomorphism in $\mathcal{C}$ and $(c_1,d_1)\in E_\Omega$. Dually, a normal cone $\delta$ belongs to $\widehat{\Delta}$ if and only if $\delta=\delta(c_1,d_1)\ast u$ where $u$ is an isomorphism in $\mathcal{D}$. 	

Now, we proceed to build the cross-connection semigroup associated with the cross-connection as a subdirect product of the locally inverse semigroups $\widehat{\Gamma}$ and $\widehat{\Delta}$. Recall that $\chi$ as defined in Theorem \ref{thmnatiso} is a natural isomorphism between the bifunctors $\Gamma(-,-)$ and $\Delta(-,-)$. This natural isomorphism gives rise to a `linking' between the locally inverse semigroups $\widehat{\Gamma}$ and $\widehat{\Delta}$.
\begin{dfn}
	Given a cross-connection $\Omega=(\Gamma,\Delta;\mathcal{C},\mathcal{D})$, a normal cone $\gamma \in \widehat{\Gamma}$ is said to be \emph{linked} to $\delta \in \widehat{\Delta}$ if there is a $(c,d) \in v\mathcal{C} \times v\mathcal{D}$ such that $\gamma \in \Gamma(c,d)$ and $ \delta = \chi(c,d)(\gamma)$; we then say that the pair $(\gamma,\delta)$ is a linked pair.
\end{dfn}

Given a cross-connection $\Omega=(\Gamma,\Delta;\mathcal{C},\mathcal{D})$ of unambiguous categories $\mathcal{C}$ and $\mathcal{D}$, define the set
\begin{equation}
\mathbb{S}\Omega=\{ (\gamma,\delta) \in \widehat{\Gamma}\times \widehat{\Delta} : (\gamma,\delta) \text{ is linked}\:\}.
\end{equation}
Define an operation on $\mathbb{S}\Omega$ as follows:
\begin{equation*}
(\gamma , \delta) \circ ( \gamma' , \delta') = (\gamma \cdot \gamma' , \delta' \cdot \delta) \  \text{  for all  }(\gamma,\delta),( \gamma' , \delta') \in \mathbb{S}\Omega.
\end{equation*} 	
Suppose $(\gamma,\delta),(\gamma',\delta') \in \mathbb{S}\Omega$, then as in the \cite[Lemma IV.30]{cross}, we can show that $\gamma \cdot \gamma'$ is linked to $ \delta' \cdot \delta$. Further by \cite[Theorem IV.32]{cross}, we see that $\mathbb{S}\Omega$ is a regular semigroup called the \emph{cross-connection semigroup}  determined by $\Omega$. Then the set of idempotents of the semigroup $\mathbb{S}\Omega$ is given by:
\begin{equation*}
E(\mathbb{S}\Omega)=\{(\gamma(c,d),\delta(c,d)) : (c,d)\in E_\Omega \}
\end{equation*}
Since $\mathbb{S}\Omega$ is a regular semigroup, the set $E(\mathbb{S}\Omega)$ is a regular biordered set. By the discussion in \cite[Section V.1.2]{cross}, we can see that the set $E(\mathbb{S}\Omega)$ is regular biorder isomorphic with the set $E_\Omega$  under the map
$$(\gamma(c,d),\delta(c,d))\mapsto (c,d).$$
More precisely, as in \cite[Section V.1.2]{cross}, we can show that the quasi orders in the set $E(\mathbb{S}\Omega)=E_\Omega$ are given by:
\begin{equation*}
(c,d) \lel(c',d') \iff c\subseteq c' \ \text{ and } \ (c,d) \ler (c',d') \iff d\subseteq d'.
\end{equation*}
so that $E_\Omega$ forms a regular biordered set with the basic products and sandwich sets as described in \cite[Section V.1.2]{cross}.

\begin{thm}\label{thmcxncon}
	Given a cross-connection $\Omega=(\Gamma,\Delta;\mathcal{C},\mathcal{D})$ of unambiguous categories $\mathcal{C}$ and $\mathcal{D}$, the cross-connection semigroup $\mathbb{S}\Omega$ is locally inverse.
\end{thm}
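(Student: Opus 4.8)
The plan is to verify condition~(2) of Theorem~\ref{thmlocinv} for $\mathbb{S}\Omega$, exploiting the identification $E(\mathbb{S}\Omega)=E_\Omega$ together with the explicit description of the quasi-orders recorded just above the statement. Under this identification the Green relations on idempotents become $(c,d)\mathrel{\mathscr{L}}(c',d')\iff c=c'$ and $(c,d)\mathrel{\mathscr{R}}(c',d')\iff d=d'$ (using that the inclusion subcategories are strict preorders), while $(c,d)\leqslant(c',d')\iff c\subseteq c'$ and $d\subseteq d'$. Thus condition~(2) splits into two dual assertions: if $(c,d_1),(c,d_2)\in\omega((c_3,d_3))$ then $d_1=d_2$ (the $\mathscr{L}$-case), and if $(c_1,d),(c_2,d)\in\omega((c_3,d_3))$ then $c_1=c_2$ (the $\mathscr{R}$-case). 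I would treat the $\mathscr{L}$-case in full and deduce the $\mathscr{R}$-case from the left--right symmetry between $\mathcal{C}$ and $\mathcal{D}$.

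For the $\mathscr{L}$-case the idea is to transport the problem into the ambient locally inverse semigroup $\widehat{\mathcal{C}}$, where Theorem~\ref{thmlocinv}(2) is available by Proposition~\ref{prop:converse}. By \eqref{eqngcd} the idempotents in question are the pairs $(\gamma(c,d_i),\delta(c,d_i))$ with $c_{\gamma(c,d_i)}=c$. First I would observe that the first-coordinate projection $\pi\colon\mathbb{S}\Omega\to\widehat{\Gamma}\subseteq\widehat{\mathcal{C}}$, $(\gamma,\delta)\mapsto\gamma$, is a semigroup homomorphism (its value on a product $\circ$ is just the product $\gamma\cdot\gamma'$ in $\widehat{\mathcal{C}}$). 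Since homomorphisms preserve the natural partial order on idempotents, and this order on a subsemigroup coincides with the one inherited from $\widehat{\mathcal{C}}$, the relations $(c,d_i)\leqslant(c_3,d_3)$ yield $\gamma(c,d_1),\gamma(c,d_2)\in\omega(\gamma(c_3,d_3))$ in $\widehat{\mathcal{C}}$. On the other hand $\gamma(c,d_1)$ and $\gamma(c,d_2)$ share the apex $c$, so Lemma~\ref{lemgc}(1) gives $\gamma(c,d_1)\mathrel{\mathscr{L}}\gamma(c,d_2)$ in $\widehat{\mathcal{C}}$. As $\widehat{\mathcal{C}}$ is locally inverse, Theorem~\ref{thmlocinv}(2) forces $\gamma(c,d_1)=\gamma(c,d_2)$.

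It remains to descend from equality of cones to $d_1=d_2$. Equality of the cones gives $\Gamma(d_1)=H(\gamma(c,d_1);-)=H(\gamma(c,d_2);-)=\Gamma(d_2)$ by \eqref{eqngcd}. Since $\Gamma$ is a local isomorphism (Definition~\ref{dfnlociso}) and $d_1,d_2\subseteq d_3$, the restriction $\Gamma_{|(d_3)}$ is an isomorphism of ideals, in particular injective on objects, whence $d_1=d_2$. The $\mathscr{R}$-case is handled identically with $\mathcal{C},\Gamma,\gamma(c,d),\widehat{\mathcal{C}}$ replaced by $\mathcal{D},\Delta,\delta(c,d),\widehat{\mathcal{D}}$; the only extra point is that the second-coordinate projection is a homomorphism into $\widehat{\Delta}^{\mathrm{op}}$ rather than $\widehat{\Delta}$, but the natural partial order on idempotents is insensitive to passing to the opposite multiplication, so the order still transfers into $\widehat{\mathcal{D}}$, where (with $\delta(c_1,d),\delta(c_2,d)$ sharing apex $d$) Lemma~\ref{lemgc}(1) and Theorem~\ref{thmlocinv}(2) apply as before.

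The step I expect to require the most care is the transfer of hypotheses between the three semigroups $\mathbb{S}\Omega$, $\widehat{\mathcal{C}}$, $\widehat{\mathcal{D}}$: one must be certain that the abstract order on $E_\Omega=E(\mathbb{S}\Omega)$ genuinely pushes forward, under the coordinate projections, to membership in $\omega(\cdot)$ inside $\widehat{\mathcal{C}}$ and $\widehat{\mathcal{D}}$, which is exactly where the projections being \emph{homomorphisms} (not merely maps of biordered sets) is used, and then that injectivity of the local isomorphism on an ideal lets us descend back. A more structural alternative avoids condition~(2) altogether: $\mathbb{S}\Omega$ is a regular subsemigroup of $\widehat{\Gamma}\times\widehat{\Delta}^{\mathrm{op}}$, and since the class of locally inverse semigroups is closed under direct products, under passage to the opposite semigroup (inverse semigroups being anti-isomorphic to themselves), and under regular subsemigroups, $\mathbb{S}\Omega$ is locally inverse. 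I would present the first argument as the main proof and keep this second one as a remark.
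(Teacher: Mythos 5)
Your proposal is correct, but the relationship to the paper is inverted: the argument you relegate to a closing remark \emph{is} the paper's entire proof. The paper simply observes that $\mathbb{S}\Omega$ is a regular subdirect product of the locally inverse semigroups $\widehat{\Gamma}$ and $\widehat{\Delta}$ and invokes the well-known fact that a regular subdirect product of locally inverse semigroups is locally inverse; it does not even comment on the twist in the second coordinate (the embedding really lands in $\widehat{\Gamma}\times\widehat{\Delta}^{\mathrm{op}}$), a point you handle explicitly via closure under anti-isomorphism, so your remark is in fact slightly more careful than the paper's text. Your main argument --- the direct verification of Theorem~\ref{thmlocinv}(2) on $E(\mathbb{S}\Omega)=E_\Omega$ --- is a genuinely different route, and it is sound: the coordinate projections are homomorphisms and hence push the natural partial order on idempotents into $\widehat{\mathcal{C}}$ and $\widehat{\mathcal{D}}$ (the order $ef=fe=e$ being insensitive to reversal of multiplication); Lemma~\ref{lemgc}(1) converts equality of apices into $\mathscr{L}$-relatedness inside $\widehat{\mathcal{C}}$ (respectively $\widehat{\mathcal{D}}$); Proposition~\ref{prop:converse} together with Theorem~\ref{thmlocinv}(2) then forces the two idempotent cones to coincide; and injectivity of the local isomorphisms $\Gamma$, $\Delta$ on the ideals $(d_3)$, $(c_3)$ descends equality of cones to equality of objects via \eqref{eqngcd}. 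As for what each approach buys: the paper's proof is a one-liner but leans on e-variety closure facts external to the paper (regular subsemigroups and products of locally inverse semigroups are locally inverse), whereas yours stays entirely inside the paper's own toolkit, makes visible exactly where the cross-connection axioms enter (the local isomorphism condition of Definition~\ref{dfnlociso} is precisely what the descent step needs), and as a byproduct essentially establishes the subsequent corollary that $E_\Omega$ is a pseudo-semilattice by direct computation rather than as a consequence.
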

\begin{proof}
	Observe that the cross-connection semigroup $\mathbb{S}\Omega$ is a subdirect product of two locally inverse semigroups $\widehat{\Gamma}$ and $\widehat{\Delta}$. It is well-known that a regular subdirect product of two locally inverse semigroups is locally inverse. Hence the theorem.
\end{proof}
\begin{corollary}
	Given a cross-connection $\Omega=(\Gamma,\Delta;\mathcal{C},\mathcal{D})$ of unambiguous categories $\mathcal{C}$ and $\mathcal{D}$, the biordered set $E_\Omega$ is a pseudo-semilattice.
\end{corollary}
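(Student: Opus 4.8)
The plan is to deduce the corollary directly from Theorem~\ref{thmcxncon} together with the standard fact, going back to Nambooripad and Pastijn, that a regular semigroup is locally inverse if and only if its biordered set of idempotents is a pseudo-semilattice. Since being a pseudo-semilattice is a property formulated entirely in terms of the biorder structure, it is invariant under biorder isomorphisms, and this is precisely what will allow us to transfer the conclusion from $E(\mathbb{S}\Omega)$ to $E_\Omega$.

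Concretely, I would argue as follows. By Theorem~\ref{thmcxncon}, the cross-connection semigroup $\mathbb{S}\Omega$ is locally inverse. Hence its set of idempotents $E(\mathbb{S}\Omega)$, viewed as a regular biordered set, is a pseudo-semilattice: in a locally inverse semigroup every sandwich set $S(e,f)$ of idempotents $e,f$ is a singleton, and the assignment sending $(e,f)$ to the unique element of $S(e,f)$ equips $E(\mathbb{S}\Omega)$ with the required pseudo-semilattice operation. This is exactly the characterisation underlying the description of locally inverse semigroups via pseudo-semilattices recalled in the abstract, and it is compatible with the equivalences listed in Theorem~\ref{thmlocinv}.

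It then remains to transport this structure along the identification $E(\mathbb{S}\Omega)\cong E_\Omega$. We have already observed that the map $(\gamma(c,d),\delta(c,d))\mapsto(c,d)$ is a regular biorder isomorphism of $E(\mathbb{S}\Omega)$ onto $E_\Omega$, and that under this map the preorders on $E_\Omega$ read $(c,d)\lel(c',d')\iff c\subseteq c'$ and $(c,d)\ler(c',d')\iff d\subseteq d'$. A biorder isomorphism preserves the quasi-orders $\lel,\ler$ together with the basic products and sandwich sets, so it carries the singleton sandwich sets of $E(\mathbb{S}\Omega)$ to singleton sandwich sets of $E_\Omega$; consequently $E_\Omega$ inherits the pseudo-semilattice operation and is itself a pseudo-semilattice.

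The argument is thus essentially immediate once Theorem~\ref{thmcxncon} is in place, and I do not expect any serious obstacle. The only point meriting a little care is the verification that \emph{pseudo-semilattice} is genuinely a biorder-invariant notion --- that is, that the sandwich sets are determined by the biorder structure alone --- so that the abstract biorder isomorphism $E(\mathbb{S}\Omega)\cong E_\Omega$, rather than a full isomorphism of semigroups, already suffices to draw the conclusion.
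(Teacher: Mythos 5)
Your proposal is correct and is essentially the paper's own argument: the corollary is stated without a separate proof precisely because it is meant to follow from Theorem~\ref{thmcxncon} (the cross-connection semigroup $\mathbb{S}\Omega$ is locally inverse, so its idempotents form a pseudo-semilattice) combined with the biorder isomorphism $E(\mathbb{S}\Omega)\cong E_\Omega$ established in the discussion immediately preceding the theorem, which is exactly the route you take. Your closing remark --- that one must check the pseudo-semilattice property (singleton sandwich sets) is determined by the biorder structure alone, so that a biorder isomorphism suffices --- is the one point the paper leaves implicit, and it is a correct and worthwhile observation rather than a deviation.
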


Further, by \cite[Theorem IV.35]{cross}, we have the following theorem.
\begin{thm}
	For a cross-connection $\Omega=(\Gamma,\Delta;\mathcal{C},\mathcal{D})$ with the cross-connection semigroup $\mathbb{S}\Omega$, the unambiguous categories $\mathbb{L}(\mathbb{S}\Omega)$ and $\mathbb{R}(\mathbb{S}\Omega)$ are isomorphic to the categories $\mathcal{C}$  and $\mathcal{D}$, respectively.
\end{thm}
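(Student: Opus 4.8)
The plan is to build an explicit isomorphism $G\colon\mathcal{C}\to\mathbb{L}(\mathbb{S}\Omega)$ of unambiguous categories and to obtain $\mathcal{D}\cong\mathbb{R}(\mathbb{S}\Omega)$ by the dual construction. By Theorem~\ref{thmcxncon} the semigroup $\mathbb{S}\Omega$ is locally inverse, so Corollary~\ref{corls} already guarantees that $\mathbb{L}(\mathbb{S}\Omega)$ and $\mathbb{R}(\mathbb{S}\Omega)$ are unambiguous categories; consequently it suffices to exhibit $G$ as an isomorphism of the underlying \emph{normal} categories. The reason is that axioms (UC\,2) and (UC\,3) merely assert the \emph{uniqueness} of data (splittings and normal factorisations) already supplied by (NC\,2) and (NC\,3), while any isomorphism of categories transports inclusions, retractions and isomorphisms bijectively, hence also normal factorisations; thus a normal-category isomorphism between two unambiguous categories is automatically an isomorphism of unambiguous categories. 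In this way the statement becomes the restriction of \cite[Theorem~IV.35]{cross} to the unambiguous setting, exactly as Theorem~\ref{thmls} was obtained by restricting \cite[Theorem~III.19]{cross}.

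On objects I would use the regular biorder isomorphism $E(\mathbb{S}\Omega)\cong E_\Omega$, $(\gamma(c,d),\delta(c,d))\mapsto(c,d)$, recorded before Theorem~\ref{thmcxncon}. Since there $(c,d)\lel(c',d')\iff c\subseteq c'$ and the inclusion preorder is strict, two idempotents of $\mathbb{S}\Omega$ are $\mathscr{L}$-related precisely when their first coordinates coincide, so the principal left ideal $\mathbb{S}\Omega\,(\gamma(c,d),\delta(c,d))$ depends only on $c$. I therefore set $G(c):=\mathbb{S}\Omega\,(\gamma(c,d),\delta(c,d))$ for any $d$ with $(c,d)\in E_\Omega$; such a $d$ exists because, writing the representing idempotent cone of $\Delta(c)$ as $\nu$, its apex $c_\nu$ lies in $M\Delta(c)$, whence the cross-connection condition \eqref{eqncxnms} forces $c\in M\Gamma(c_\nu)$, i.e.\ $(c,c_\nu)\in E_\Omega$. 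Together with the strictness of $\mathscr{L}$-classes this makes $G$ a well-defined bijection on objects. For a morphism $f\in\mathcal{C}(c,c')$ I would mimic the formula of Theorem~\ref{thmls}: put $\gamma_f:=\gamma(c,d)\ast f^\circ$, a normal cone whose apex is the image of $f$ and which by \eqref{eqngcd} lies in $\widehat{\Gamma}$, let $\delta_f:=\chi(\,\cdot\,)(\gamma_f)$ be its partner under the natural isomorphism $\chi$ of Theorem~\ref{thmnatiso}, so that $s_f:=(\gamma_f,\delta_f)$ is a linked pair, and set $G(f):=\rho(e_1,s_f,e_2)$, where $e_1,e_2$ are the chosen idempotents over $c,c'$. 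A short computation, using $\gamma(c,d)(c)=1_c$ (Lemma~\ref{lemrs}) on the left and an inclusion $\iota$ with $\iota^\circ=1$ on the right, confirms $e_1\circ s_f=s_f=s_f\circ e_2$, so $G(f)$ is indeed a morphism $G(c)\to G(c')$.

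The verification then falls into the standard checks, in this order: (i) $G$ preserves inclusions, since $G(\iota(c,c'))$ reduces to $\rho(e_1,e_1,e_2)$ once one notes $\iota(c,c')^\circ=1_c$ and that $c\subseteq c'$ gives $e_1\lel e_2$ for every admissible choice of second coordinates; (ii) $G$ is a functor, which follows from the compatibility of $\ast$ and $(\,\cdot\,)^\circ$ with composition already used in Theorem~\ref{thmls}; and (iii) $G$ is fully faithful. Step (iii) is the main obstacle. Surjectivity is where the linked-pair description does the real work: any $(\gamma,\delta)\in e_1\,\mathbb{S}\Omega\,e_2$ satisfies $\gamma=\gamma(c,d)\ast(\gamma(c))^\circ$ (from $e_1\circ(\gamma,\delta)=(\gamma,\delta)$) with apex included in $c'$ (from $(\gamma,\delta)\circ e_2=(\gamma,\delta)$), so $\gamma=\gamma_f$ for $f:=\gamma(c)$ followed by the inclusion into $c'$, while the linking forces $\delta=\delta_f$; thus no information beyond $f$ survives. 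Injectivity is exactly where the \emph{unambiguous} hypothesis enters: $G(f)=G(f')$ yields $\gamma(c,d)\ast f^\circ=\gamma(c,d)\ast f'^\circ$, hence $f^\circ=f'^\circ$ upon evaluating at $c$, and the \emph{uniqueness} of the normal factorisation (UC\,3) then upgrades this to $f=f'$. Once (iii) is settled, $G$ is an isomorphism of normal, hence of unambiguous, categories, and the dual argument applied to $\chi^{-1}$, $\widehat{\Delta}$ and $\mathbb{R}(\mathbb{S}\Omega)$ yields $\mathcal{D}\cong\mathbb{R}(\mathbb{S}\Omega)$, completing the proof.
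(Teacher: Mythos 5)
Your proposal is correct and takes essentially the same route as the paper: the paper's entire ``proof'' of this theorem is the single observation that it is \cite[Theorem IV.35]{cross} restricted to the unambiguous setting (legitimate because Theorem~\ref{thmcxncon} makes $\mathbb{S}\Omega$ locally inverse, so its ideal categories are unambiguous, and the uniqueness axioms (UC 2)--(UC 3) transfer along any inclusion-preserving isomorphism), which is exactly your opening paragraph. The explicit isomorphism you construct afterwards is a correct unpacking of what that citation encapsulates -- it mirrors Nambooripad's own argument for Theorem IV.35 -- so it adds detail rather than a different method.
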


It is known that the category $\mathbf{LIS}$ of locally inverse semigroups forms a full subcategory of the category $\mathbf{RS}$ of regular semigroups. Similarly, we can see that the category $\mathbf{CUC}$ of cross-connections of unambiguous categories forms a full subcategory of the category $\mathbf{Cr}$ of cross-connections of normal categories. Hence, by  \cite[Theorem V.18]{cross}, we obtain the following structure theorem for locally inverse semigroups.

\begin{thm}\label{cateq}
	The category $\mathbf{LIS}$ of locally inverse semigroups is equivalent to the category $\mathbf{CUC}$ of cross-connections of unambiguous categories.
\end{thm}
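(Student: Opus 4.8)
The plan is to obtain the desired equivalence as a restriction of Nambooripad's general equivalence $\mathbf{RS}\simeq\mathbf{Cr}$ established in \cite[Theorem V.18]{cross}. Recall that this equivalence is implemented by two mutually quasi-inverse functors: the functor $\Omega\colon\mathbf{RS}\to\mathbf{Cr}$ sending a regular semigroup $S$ to its cross-connection $\Omega S=(\mathbb{L}(S),\mathbb{R}(S);\Gamma_S,\Delta_S)$, and the functor $\mathbb{S}\colon\mathbf{Cr}\to\mathbf{RS}$ sending a cross-connection to its cross-connection semigroup $\mathbb{S}\Omega$; in particular there are natural isomorphisms $\mathbb{S}\circ\Omega\cong\mathrm{id}_{\mathbf{RS}}$ and $\Omega\circ\mathbb{S}\cong\mathrm{id}_{\mathbf{Cr}}$.

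I would first record the elementary categorical fact that an equivalence restricts to full subcategories as soon as both functors respect them on objects: if $F\colon\mathcal{A}\to\mathcal{B}$ is an equivalence with quasi-inverse $G$, and $\mathcal{A}'\subseteq\mathcal{A}$, $\mathcal{B}'\subseteq\mathcal{B}$ are full subcategories with $F(\mathrm{ob}\,\mathcal{A}')\subseteq\mathrm{ob}\,\mathcal{B}'$ and $G(\mathrm{ob}\,\mathcal{B}')\subseteq\mathrm{ob}\,\mathcal{A}'$, then $F_{|\mathcal{A}'}\colon\mathcal{A}'\to\mathcal{B}'$ is again an equivalence. Here $F_{|\mathcal{A}'}$ is fully faithful because $F$ is and both subcategories are full, while essential surjectivity follows because for each $B\in\mathrm{ob}\,\mathcal{B}'$ the object $G(B)$ lies in $\mathrm{ob}\,\mathcal{A}'$ and the counit component $F(G(B))\to B$ is an isomorphism which, by fullness of $\mathcal{B}'$, already belongs to $\mathcal{B}'$.

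It then suffices to verify the two object-level conditions with $F=\Omega$, $G=\mathbb{S}$, $\mathcal{A}'=\mathbf{LIS}$ and $\mathcal{B}'=\mathbf{CUC}$. The inclusion $\Omega(\mathrm{ob}\,\mathbf{LIS})\subseteq\mathrm{ob}\,\mathbf{CUC}$ is precisely Theorem~\ref{thmcxns} together with Proposition~\ref{prop:unambigous} and its dual: if $S$ is locally inverse then $\mathbb{L}(S)$ and $\mathbb{R}(S)$ are unambiguous, so $\Omega S$ is a cross-connection of unambiguous categories. The reverse inclusion $\mathbb{S}(\mathrm{ob}\,\mathbf{CUC})\subseteq\mathrm{ob}\,\mathbf{LIS}$ is precisely Theorem~\ref{thmcxncon}: the cross-connection semigroup of a cross-connection of unambiguous categories is locally inverse. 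Since, moreover, $\mathbf{LIS}$ is a full subcategory of $\mathbf{RS}$ and $\mathbf{CUC}$ is a full subcategory of $\mathbf{Cr}$, the restriction lemma applies and yields the equivalence $\mathbf{LIS}\simeq\mathbf{CUC}$.

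The step deserving the most care --- and the only genuine obstacle once the preceding sections are in hand --- is the fullness of $\mathbf{CUC}$ in $\mathbf{Cr}$, which is what makes the essential-surjectivity argument legitimate. A morphism in $\mathbf{Cr}$ is a pair of compatible (inclusion-preserving) functors between the underlying normal categories, and one must check that between two cross-connections whose constituent categories happen to be unambiguous, every such $\mathbf{Cr}$-morphism is automatically a morphism of $\mathbf{CUC}$, so that passing from normal to unambiguous categories imposes no new compatibility requirement. This is clear because unambiguous categories are obtained from normal categories purely by strengthening the existence axioms (NC2) and (NC3) to the uniqueness axioms (UC2) and (UC3) --- an object-level restriction that leaves the notion of morphism untouched. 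Granting this, the restriction argument goes through and the theorem follows.
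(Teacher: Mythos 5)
Your proposal is correct and follows essentially the same route as the paper: the paper likewise obtains the result by restricting Nambooripad's equivalence $\mathbf{RS}\simeq\mathbf{Cr}$ from \cite[Theorem V.18]{cross} to the full subcategories $\mathbf{LIS}$ and $\mathbf{CUC}$, with Theorem~\ref{thmcxns} and Theorem~\ref{thmcxncon} supplying the two object-level inclusions. Your write-up merely makes explicit the restriction lemma and the fullness verification that the paper leaves tacit, which is a useful but not substantively different elaboration.
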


\section{The inverse case}\label{secinv}
In this section, we specialise the results of Section \ref{seccxnlis} to obtain a new structure theorem for inverse semigroups. We will see that the full machinery of cross-connections is not required for this task due to the intrinsic symmetry of inverse semigroups. It also turns out that normal cones, our previous building blocks, are too general to build inverse semigroups. We replace them with certain subspecies called inversive cones. As the reader will see, employing these cones, we obtain the structure theorem for inverse semigroups using a single category. We  characterise this category as an inversive category and prove a category equivalence between the category $\mathbf{IS}$ of inverse semigroups and the category $\mathbf{IC}$ of inversive categories; as a parallel to the Ehresmann--Schein--Nambooripad Theorem (which describes a category isomorphism between inverse semigroups and inductive groupoids). Further, we will also outline how an inductive groupoid is `sitting inside' a given inversive category; thereby describing the equivalence of these approaches. Thus, we are able to partly recover the  Ehresmann--Schein--Nambooripad Theorem from our considerations of locally inverse semigroups.

\subsection{Inverse semigroups and inversive categories}
We begin by analysing the category $\mathbb{L}(S)$, where $S$ is an inverse semigroup. Our discussion will develop as follows: registering certain properties\footnote{Some of them had been discussed in \cite{inv}, but we reprove these properties here in our notation for the sake of completeness and uniformity.} of $\mathbb{L}(S)$ is interwoven with introducing appropriate abstract notions that capture these properties. Eventually, we arrive at a collection of notions that provides a complete abstract characterisation for categories of the form $\mathbb{L}(S)$ with $S$ being an inverse semigroup.

As in Section \ref{seccxnlis}, we observe that the category $\mathbb{L}(S)$ is a category with subobjects. Since $S$ is inverse, the set $E(S)$ forms a semilattice. So, the set $v\mathbb{L}(S)$ along with the partial order $\leq$ defined by:
$$Se\leq Sf \iff Se\subseteq Sf$$
also forms a semilattice. Abstracting this, we have the following definition.
\begin{dfn}\label{dfncatsem}
	 A category with subobjects $(\mathcal{C},\mathcal{P})$ is called a \emph{semilattice ordered category} (abbreviated as \emph{so-category} in the sequel) if $v\mathcal{P}$ forms a semilattice with respect to the relation $\leq$ defined as:
	$$	p\leq q \iff \text{there is an inclusion from }p \text{ to }q$$
	for any $p,q\in v\mathcal{P}$.
\end{dfn}

Given any two objects $c,c'$ in an so-category $\mathcal{C}$, there exists a unique object $d$ in $\mathcal{C}$ such that 1) there are inclusions from $d$ to $c$ and $c'$, 2) for every object $d'$ such that there are inclusions from $d'$ to $c$ and $c'$, there is an inclusion from $d'$ to $d$. Hence this `maximal' object $d$ acts as the \emph{meet} of the objects $c$ and $c'$. We denote the unique object $d$ in $v\mathcal{C}$ by $c\wedge c'$.

When $S$ is an inverse semigroup, we can easily see that any inclusion in $\mathbb{L}(S)$ will be of the form  $\rho(e,e,f)$ and any retraction will be of the form $\rho(e,f,f)$ where $e,f\in E(S)$. Any isomorphism  in $\mathbb{L}(S)$ will be of the form $\rho(aa^{-1}, a, a^{-1}a)$ for some $a\in S$; equivalently of the form $\rho(e,u,f)$ for some $e,f\in E(S)$ and some $u\in S$ such that  $e\mathrel{\mathscr{R}}u\mathrel{\mathscr{L}}f$.
For an arbitrary morphism $\rho(e,u,f)\in \mathbb{L}(S)$, we can easily see that its unique normal factorisation is given by:
$$\rho(e,u,f) = \rho(e,g,g) \rho(g,u,h) \rho(h,h,f)$$
where $g=uu^{-1}$ and $h=u^{-1}u$. Also, for an inclusion $\rho(e,e,f)$, its unique retraction is given by $\rho(f,e,e)$.

Observe that in an so-category $\mathcal{C}$ where every inclusion splits uniquely, we have two morphisms uniquely associated with any pair $c,c'\in v\mathcal{C}$ such that $c\leq c'$, namely, the inclusion $\iota(c,c')$ and its retraction $q(c',c)$. The subcategory generated by the retractions and inclusions in $\mathcal{C}$ is called the \emph{core} of $\mathcal{C}$ and  denoted by $\langle \mathcal{C} \rangle$.

When $S$ is an inverse semigroup, any morphism $\rho$ in the core category $\langle\mathbb{L}(S)\rangle$  can be written as $\rho= \rho(e_1,e_1,e_2)\:\rho(e_2,e_3,e_3)\cdots\rho(e_{2n-1},e_{2n-1},e_{2n})\:\rho(e_{2n},e_{2n},e_{2n+1})$ where each `odd' factor $\rho(e_{2i-1},e_{2i-1},e_{2i})$ is an inclusion and each `even' factor $\rho(e_{2i},e_{2i},e_{2i+1})$ is a retraction. (To ensure that the factorisation of $\rho$ starts with an inclusion and ends with a retraction, we can, if necessary, prepend or append a morphism of the form $1_e=\rho(e,e,e)$ which serves both as an inclusion  and a retraction.) Then $\rho=\rho(e_1,e_1e_3\cdots e_{2n+1}, e_{2n+1})$. Since $E(S)$ is a semilattice, we see that the element $e_1e_3\cdots e_{2n+1}$ is an idempotent, say $f$, and so the unique normal factorisation of $\rho$ will be given by
$$\rho= \rho(e_1,f,f)\rho(f,f,e_n)$$
where $\rho(e_1,f,f)$ is a retraction and $\rho(f,f,e_n)$ is an inclusion. This leads to the following definition of a special kind of normal factorisation.

\begin{dfn}\label{invfact}
Let $\mathcal{C}$ be an so-category in which inclusions split uniquely. A morphism $f=\iota(c_1,c_2)q(c_2,c_3)\cdots \iota(c_{2n-1},c_{2n})q(c_{2n},c_{2n+1})$ in the core $\langle \mathcal{C} \rangle$ is said to have an \emph{inversive factorisation} if $f=q(c_1,d)\iota(d,c_{2n+1})$ where $q(c_1,d)$ is the retraction from $c_1$ to $d:=\bigwedge\limits_{i=0}^n c_{2i+1}$ and $\iota(d,c_{2n+1})$ is the inclusion from $d$ to $c_{2n+1}$.
\end{dfn}

\begin{rmk}\label{rmkinvfact2}
Let $\mathcal{C}$ be an so-category with inclusions splitting uniquely where every morphism in the core $\langle \mathcal{C} \rangle$ has an {inversive factorisation}. Then if $f\colon c \to d$ is an isomorphism in $\langle \mathcal{C} \rangle$, then $c=d$ and $f=1_c$, that is, the only isomorphisms in $\langle \mathcal{C} \rangle$ are the identity morphisms. In other words, the core $\langle \mathcal{C} \rangle$ is the subcategory of $\mathcal{C}$ consisting of all the morphisms arising from the underlying semilattice.
\end{rmk}	

In an so-category $\mathcal{C}$, a \emph{cone} $\gamma$ with apex $d\in v\mathcal{C}$ is a map $\gamma\colon v\mathcal{C}\to\mathcal{C}$ such that $\gamma(a)\in\mathcal{C}(a,d)$ for each $a\in v\mathcal{C}$ and $\iota(a,b)\gamma(b)=\gamma(a)$ whenever $a\leq b$. For such a cone $\gamma$, define the set
$$m_\gamma:=\{ c: \gamma(c) \text{ is an isomorphism}\}.$$
We  refer to the set $m_\gamma$ as the $M$-set of the cone $\gamma$.

As mentioned earlier, normal cones prove to be too general for the inverse case. So, we proceed to analyse the principal cone $\rho^a$ in an inverse semigroup $S$ to characterise our apex building blocks. Recall from equation \eqref{eqnprinc} that the principal cone $\rho^a\colon v\mathbb{L}(S)\to \mathbb{L}(S)$ with apex $c_\gamma=Sa=Sf$ is defined for each $Se\in v\mathbb{L}(S)$ as $\rho^a(Se) = \rho(e,ea,f)$, where $f\in E(L_a)$. Since $S$ is inverse, there is a unique idempotent in the $\mathscr{L}$-class of the element $a$ so that $f=a^{-1}a$. Now, recall from \cite[Lemma III.15]{cross} that the set
$$m_{\rho^a}:=\{ Se: \rho^a(Se) \text{ is an isomorphism}\} =\{ Se : e\in E(R_a)\} = \{Saa^{-1}\}.$$
That is, for a principal cone $\rho^a$ in the category $\mathbb{L}(S)$ where $S$ is an inverse semigroup, there is a unique isomorphism component and so the $M$-set $m_{\rho^a}$ is a singleton. In what follows, we use the same notation for this singleton set and its unique element.

Let $\rho^a$ be a principal cone in the category $\mathbb{L}(S)$ and let $Sf:=Sa^{-1}a$ be its apex and $Sg:=Saa^{-1}$ its $M$-set. Then for an arbitrary object $Se\in v\mathbb{L}(S)$, we have
\begin{align*}
\rho^a(Se)&=\rho(e,ea,f)\\
&= \rho(e,eg,eg)\rho(eg,ea,h)\rho(h,h,f) &&\text{using normal factorisation and }h=a^{-1}ea\\
&= \rho(e,eg,eg)\rho(eg,ea,f)&&\text{since }eah=ea(a^{-1}ea)=ea(a^{-1}e) ea = ea\\
&= \rho(e,eg,eg)\rho(eg,ega,f)&&\text{since }ea=e(aa^{-1}a)=e(aa^{-1})a = ega\\
&= q(Se,Seg)\rho^a(Seg)&& \\
&= q(Se,Seg)\:\iota(Seg,Sg)\:\rho^a(Sg) &&\text{since }\rho^a \text{ is a cone and }Seg\subseteq Sg.
\end{align*}
That is, the component of the principal cone $\rho^a$ at any object $Se$ is composed of:
\begin{enumerate}[(i)]
	\item the retraction from $Se$ to the object $Seg=Se\wedge m_{\rho^a}$,
	\item the inclusion from $Se\wedge m_{\rho^a}$ to the object $m_{\rho^a}$,
	\item the component of the principal cone at $m_{\rho^a}$.
\end{enumerate}
Hence, $\coim\rho^a(Se)=Se\wedge m_{\rho^a}$. The above discussion inspires us to isolate certain special normal cones in an so-category, which we call inversive cones.

\begin{dfn}\label{dfnicone}
	A cone $\gamma$ in an so-category $\mathcal{C}$ is said to be an \emph{inversive cone} if:
	\begin{enumerate}
		\item the $M$-set $m_{\gamma}$ is  a singleton,
		\item for each $c\in v\mathcal{C}$, $\coim\gamma(c)=c\wedge m_{\gamma}$.
	\end{enumerate}
\end{dfn}

\begin{rmk}\label{rmkinvcone}
	Observe that an inversive cone $\gamma$ in an so-category $\mathcal{C}$ gets completely determined by the component of $\gamma$ at the object $m_\gamma\in v\mathcal{C}$. Indeed, for any $c\in v\mathcal{C}$, since $c\wedge m_\gamma \le m_\gamma$, we have $\gamma(c\wedge m_\gamma)=\iota(c\wedge m_\gamma,m_\gamma)\gamma(m_\gamma)$. Hence $\gamma(c)=q(c,c\wedge m_\gamma)\iota(c\wedge m_\gamma,m_\gamma)\gamma(m_\gamma)$. This leads to the following lemma.
\end{rmk}

\begin{lemma}\label{lempic}
	If $S$ is an inverse semigroup, for every $Sf\in \mathbb{L}(S)$, there exists a unique idempotent inversive cone with apex $Sf$, namely $\rho^f$.
\end{lemma}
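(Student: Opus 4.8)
The plan is to establish three facts: that $\rho^f$ has apex $Sf$ and is idempotent, that it satisfies the two defining conditions of an inversive cone, and that it is the only idempotent inversive cone with apex $Sf$.

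First I would check idempotency. As $f$ is an idempotent, $f\in E(L_f)$, so \eqref{eqnprinc} gives $c_{\rho^f}=Sf$ and $\rho^f(Sf)=\rho(f,ff,f)=\rho(f,f,f)=1_{Sf}$. By Lemma~\ref{lemrs}, a normal cone $\mu$ is idempotent exactly when $\mu(c_\mu)=1_{c_\mu}$, so $\rho^f$ is an idempotent cone with apex $Sf$.

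Next I would verify that $\rho^f$ is inversive by specialising the computations carried out above for a general principal cone $\rho^a$ to the case $a=f$. Since $S$ is inverse and $f$ is idempotent, $f=f^{-1}$ and $ff^{-1}=f$, so the $M$-set is $m_{\rho^f}=\{Sff^{-1}\}=\{Sf\}$, a singleton; this is condition (1) of Definition~\ref{dfnicone}. Condition (2), that $\coim\rho^f(Se)=Se\wedge m_{\rho^f}$ for every object $Se$, is precisely the factorisation of the component of $\rho^a$ established above, read off at $a=f$. Hence $\rho^f$ is an inversive cone.

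Finally, for uniqueness, I would take an arbitrary idempotent inversive cone $\gamma$ with apex $Sf$ and show $\gamma=\rho^f$. Idempotency and Lemma~\ref{lemrs} give $\gamma(Sf)=1_{Sf}$, which is an isomorphism, so $Sf\in m_\gamma$; since $\gamma$ is inversive, $m_\gamma$ is a singleton, forcing $m_\gamma=\{Sf\}$. By Remark~\ref{rmkinvcone}, an inversive cone is completely determined by its component at its (unique) $M$-set object, and here both $\gamma$ and $\rho^f$ have $M$-set object $Sf$ with component the identity $1_{Sf}$; hence the formula of Remark~\ref{rmkinvcone} yields $\gamma(Se)=q(Se,Se\wedge Sf)\,\iota(Se\wedge Sf,Sf)=\rho^f(Se)$ for every $Se$, so $\gamma=\rho^f$. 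I do not anticipate a genuine obstacle; the one step needing care is precisely this observation that an idempotent inversive cone must have its apex as its single $M$-set object, since it is this fact that lets Remark~\ref{rmkinvcone} collapse every such cone onto $\rho^f$.
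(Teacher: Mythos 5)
Your proof is correct and follows essentially the same route as the paper's: verify that $\rho^f$ is an idempotent inversive cone, then show any idempotent inversive cone $\mu$ with apex $Sf$ must have $m_\mu=\{Sf\}$ (via Lemma~\ref{lemrs} and the singleton condition) and hence collapses onto $\rho^f$ by the decomposition of Remark~\ref{rmkinvcone}. The paper merely writes out that decomposition inline, computing $\mu(Se)=\rho(e,ef,ef)\rho(ef,ef,f)\rho(f,f,f)=\rho(e,ef,f)=\rho^f(Se)$, rather than citing the remark, so the two arguments are the same in substance.
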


\begin{proof}
	Clearly the cone $\rho^f$ is an idempotent inversive cone with apex $Sf$. Let $\mu$ be an arbitrary idempotent inversive cone with apex $Sf$. Since $\mu$ is an idempotent cone, by Lemma \ref{lemrs}, we have $\mu(Sf)=1_{Sf}$. But since $\mu$ is an inversive cone, the $M$-set $m_{\mu}$ is  a singleton and this implies that $m_{\mu}=\{Sf\}$. Also, for an arbitrary $Se\in v\mathbb{L}(S)$, we have $\coim\mu(Se)=Se\wedge m_{\mu}$. So,
	\begin{align*}
	\mu(Se)&= q(Se,Se\wedge m_{\mu}) \mu(Se\wedge m_{\mu})&&\\
	&= q(Se,Se\wedge m_{\mu})\iota(Se\wedge m_{\mu},m_{\mu})\mu(m_{\mu})&&\text{since }Se\wedge m_{\mu}\le m_{\mu}\\
	&=\rho(e,ef,ef)\rho(ef,ef,f)\rho(f,f,f)&&\text{since }m_{\mu}=\{Sf\}\\
	&=\rho(e,ef,f)&&\\
	&=\rho^f(Se).&&
	\end{align*}
	So $\mu=\rho^f$ and hence the lemma.
\end{proof}

Now, we collect all properties of the category of the principal left ideals of an inverse semigroup registered so far into a suitable categorical abstraction.
\begin{dfn}\label{dfnicat}
	A category $\mathcal{C}$ is said to be an \emph{inversive category} if:
	\begin{enumerate}
		\item [(IC 1)] $\mathcal{C}$ is an so-category;
		\item [(IC 2)] every inclusion in $\mathcal{C}$ splits \emph{uniquely};
		\item [(IC 3)] every morphism in $\mathcal{C}$ admits a \emph{unique} normal factorisation;
		\item [(IC 4)] every morphism in the core $\langle \mathcal{C} \rangle$ has an \emph{inversive} factorisation;
		\item [(IC 5)] for each $c\in v\mathcal{C}$, there is a \emph{unique inversive} idempotent cone with apex $c$.
	\end{enumerate}
\end{dfn}
In the sequel, given an object $c$ in an inversive category $\mathcal{C}$, the {unique inversive} idempotent cone with apex $c$ is denoted by $\mu_c$.

An inclusion preserving functor $F$ between two inversive categories $\mathcal{C}_1$ and $\mathcal{C}_2$ is called an \emph{inversive functor} if for any two objects $c,c'\in v\mathcal{C}_1$,
$$vF(c\wedge c')= vF(c)\wedge vF(c').$$
It is easy to see that inversive categories with inversive functors as morphisms form a locally small category $\mathbf{IC}$.

We have already seen that given an inverse semigroup $S$, the category $\mathbb{L}(S)$ is inversive. Further, given a homomorphism $\phi$ between two inverse semigroups $S_1$ and $S_2$, we can define a functor $\Phi\colon \mathbb{L}(S_1) \to \mathbb{L}(S_2)$ as follows. For idempotents $e,f\in S_1$ and $u\in eS_1f$,
\begin{equation}\label{eqnPhi}
v\Phi\colon S_1e\mapsto S_2 e\phi;\quad \Phi \colon \rho(e,u,f) \mapsto \rho(e\phi,u\phi,f\phi).
\end{equation}
It is easy to verify that $\Phi$ is an inversive functor. We have the following proposition.
\begin{proposition}\label{proC}
The assignment:
	$$S\mapsto \mathbb{L}(S); \quad \phi \mapsto \Phi$$
	constitutes a functor, say $\mathtt{C}$, from the category $\mathbf{IS}$ of inverse semigroups to the category $\mathbf{IC}$ of inversive categories.
\end{proposition}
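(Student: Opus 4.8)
The plan is to verify that the assignment $\mathtt{C}$ respects identities and composition, after first confirming that it is well-defined on objects and morphisms. The well-definedness on objects is already established, since Definition~\ref{dfnicat} together with the discussion preceding Lemma~\ref{lempic} shows that $\mathbb{L}(S)$ is an inversive category whenever $S$ is inverse. For morphisms, I would note that the text preceding equation~\eqref{eqnPhi} already asserts that $\Phi$ is an inversive functor; so the remaining work is purely the functoriality of $\mathtt{C}$.

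First I would treat identities. If $\phi = 1_S \colon S \to S$ is the identity homomorphism, then from~\eqref{eqnPhi} we have $v\Phi(Se) = S(e\,1_S) = Se$ and $\Phi(\rho(e,u,f)) = \rho(e,u,f)$, so $\Phi = 1_{\mathbb{L}(S)}$. Hence $\mathtt{C}(1_S) = 1_{\mathtt{C}(S)}$.

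Next I would treat composition. Let $\phi \colon S_1 \to S_2$ and $\psi \colon S_2 \to S_3$ be homomorphisms of inverse semigroups, with induced functors $\Phi, \Psi$ and with $\Xi$ the functor induced by $\phi\psi$. On objects, $v\Xi(S_1 e) = S_3\, e(\phi\psi) = S_3\,(e\phi)\psi = v\Psi(v\Phi(S_1 e))$. On a morphism $\rho(e,u,f) \in \mathbb{L}(S_1)$ with $u \in eS_1 f$,
\[
\Xi(\rho(e,u,f)) = \rho\big(e(\phi\psi),\,u(\phi\psi),\,f(\phi\psi)\big) = \rho\big((e\phi)\psi,\,(u\phi)\psi,\,(f\phi)\psi\big) = \Psi\big(\Phi(\rho(e,u,f))\big),
\]
where the last equality is just the definition~\eqref{eqnPhi} applied twice. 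Thus $\mathtt{C}(\phi\psi) = \mathtt{C}(\phi)\,\mathtt{C}(\psi)$ (recalling that morphisms compose left-to-right), and $\mathtt{C}$ is a functor.

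I do not anticipate a genuine obstacle here: the verification is routine once the two nontrivial facts have been imported, namely that $\mathbb{L}(S)$ is inversive (Definition~\ref{dfnicat} and Lemma~\ref{lempic}) and that each $\Phi$ is an inversive functor. The only point requiring mild care is checking that $\Phi$ is \emph{well-defined} as a functor on morphisms, i.e.\ that the assignment $\rho(e,u,f) \mapsto \rho(e\phi,u\phi,f\phi)$ is independent of the chosen representatives: by the criterion quoted after~\eqref{eqnprinc}, $\rho(e,u,f) = \rho(g,v,h)$ iff $e\mathrel{\mathscr{L}}g$, $f\mathrel{\mathscr{L}}h$ and $v = gu$, and applying the homomorphism $\phi$ preserves each of these conditions (Green's $\mathscr{L}$-relation is preserved by homomorphisms, and $v\phi = (gu)\phi = (g\phi)(u\phi)$). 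This independence, together with the fact that $\Phi$ visibly sends inclusions to inclusions and preserves the composition $\rho(e,u,f)\rho(f,v,h) = \rho(e,uv,h)$, is what the preceding ``It is easy to verify that $\Phi$ is an inversive functor'' refers to, and it is exactly what makes the displayed composition computation above legitimate.
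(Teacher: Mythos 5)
Your proposal is correct and follows essentially the same route as the paper: the paper states Proposition~\ref{proC} without a written proof, relying on the preceding facts that $\mathbb{L}(S)$ is inversive and that each $\Phi$ defined by \eqref{eqnPhi} is an inversive functor. Your explicit checks of well-definedness (via the representative criterion $\rho(e,u,f)=\rho(g,v,h)$), preservation of identities, and preservation of composition are precisely the routine verifications the paper leaves to the reader.
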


\subsection{Inverse semigroup from an inversive category}

Having functorially associated an inversive category with a given inverse semigroup in the previous section, we proceed to show that every inversive category arises as the category $\mathbb{L}(S)$ of a suitable inverse semigroup $S$. Naturally, we search for our required inverse semigroup in the set of all inversive cones arising from an inversive category. First we need to prove some preliminary lemmas.

\begin{lemma}\label{leminvcone}
If an so-category $\mathcal{C}$ is unambiguous, then a cone $\gamma$ in $\mathcal{C}$ is inversive if and only if it can be represented as $\gamma=\mu \ast u$ where $\mu$ is an idempotent inversive cone and $u$ is an isomorphism in $\mathcal{C}$.
\end{lemma}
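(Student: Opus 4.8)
The plan is to prove the two implications separately, using the structure theorem for inversive cones (Remark~\ref{rmkinvcone}) and the characterisations of Green's relations in $\widehat{\mathcal{C}}$ (Lemmas~\ref{lemgc} and~\ref{lemorder}).

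For the \emph{if} direction, I would start with an idempotent inversive cone $\mu$ and an isomorphism $u\colon c_\mu\to d$ in $\mathcal{C}$, and set $\gamma:=\mu\ast u$. Since $\mu$ is an idempotent cone, Lemma~\ref{lemrs} gives $\mu(c_\mu)=1_{c_\mu}$, so the apex of $\gamma$ is $d$ and the component $\gamma(c)=\mu(c)u$ for each $c$. The key computation is to check the two defining conditions of an inversive cone (Definition~\ref{dfnicone}). First, $\gamma(c)$ is an isomorphism exactly when $\mu(c)$ is (as $u$ is already an isomorphism), so $m_\gamma=m_\mu$, which is a singleton by hypothesis; since $m_\mu=\{c_\mu\}$ we get $m_\gamma=\{c_\mu\}$. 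Second, because $u$ is an isomorphism, post-composing by $u$ does not change the retraction part of a normal factorisation, so $\coim\gamma(c)=\coim\mu(c)=c\wedge m_\mu=c\wedge m_\gamma$, using condition (2) for $\mu$ and the equality $m_\mu=m_\gamma$. Thus $\gamma$ is inversive.

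For the \emph{only if} direction, suppose $\gamma$ is inversive with apex $d$ and singleton $M$-set $m_\gamma=\{n\}$. The idea is to produce $\mu$ as the unique idempotent inversive cone with apex $n$ (which exists because $\mathcal{C}$, being unambiguous and an so-category, satisfies the hypotheses needed to run Lemma~\ref{lempic}'s argument; more precisely I would invoke the uniqueness of idempotent inversive cones at a fixed apex, reproving it in the abstract setting if necessary). Since $n\in m_\gamma$, the component $\gamma(n)\colon n\to d$ is an isomorphism; put $u:=\gamma(n)$. I then claim $\gamma=\mu\ast u$. To verify this, I would compare components at an arbitrary $c$: by Remark~\ref{rmkinvcone} applied to $\gamma$, we have $\gamma(c)=q(c,c\wedge n)\,\iota(c\wedge n,n)\,\gamma(n)$, while $(\mu\ast u)(c)=\mu(c)u=q(c,c\wedge n)\,\iota(c\wedge n,n)\,\mu(n)\,u$, again by Remark~\ref{rmkinvcone} applied to the idempotent inversive $\mu$ (whose $M$-set is $\{n\}$). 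Since $\mu(n)=1_n$ and $u=\gamma(n)$, the two expressions coincide, giving $\gamma=\mu\ast u$ as required.

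The main obstacle I anticipate is the \emph{only if} direction, specifically justifying the existence of the idempotent inversive cone $\mu$ with apex $n$ purely from the unambiguous so-category hypotheses, since Lemma~\ref{lempic} was stated only for categories of the form $\mathbb{L}(S)$ with $S$ inverse. I would handle this by constructing $\mu$ directly via Remark~\ref{rmkinvcone}: define $\mu$ to be the cone with apex $n$ whose component at $n$ is $1_n$ and set $\mu(c):=q(c,c\wedge n)\,\iota(c\wedge n,n)$ for all $c$, then check that this is a well-defined cone (using that inclusions split uniquely and the meet operation behaves correctly in an so-category) and that it is idempotent and inversive. Verifying the cone axiom $\iota(a,b)\mu(b)=\mu(a)$ for $a\le b$ reduces to a compatibility identity between the retraction–inclusion factorisations through $a\wedge n$ and $b\wedge n$, which should follow from uniqueness of splittings together with $a\wedge n\le b\wedge n$.
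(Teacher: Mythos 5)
Your \emph{if} direction is correct and is essentially the paper's argument: from $\gamma=\mu\ast u$ you get $m_\gamma=m_\mu=\{c_\mu\}$, and the coimage computation via ``retraction followed by a monomorphism'' together with uniqueness of normal factorisations is exactly how the paper concludes $\coim\gamma(c)=c\wedge m_\gamma$. The gap is in your \emph{only if} direction, at precisely the step you flagged: the claim that the cone axiom for your from-scratch cone $\mu(c):=q(c,c\wedge n)\,\iota(c\wedge n,n)$ ``follows from uniqueness of splittings together with $a\wedge n\le b\wedge n$.'' It does not. What splitting uniqueness actually yields is only the two weaker facts that, for $a\le b$, both sides of the required identity
\[
\iota(a,b)\,q(b,b\wedge n)\,\iota(b\wedge n,n)\;=\;q(a,a\wedge n)\,\iota(a\wedge n,n)
\]
agree after precomposing with $\iota(a\wedge n,a)$ (both become $\iota(a\wedge n,n)$) and after postcomposing with the retraction $q(n\text{-side})$ onto $a\wedge n$ (both become $q(a,a\wedge n)$, using that retractions compose). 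Since $\iota(a\wedge n,a)$ is not an epimorphism and retractions are not monomorphisms, neither fact can be cancelled to give the identity itself. What the identity really asserts is that the \emph{image} of $\iota(a,b)\,q(b,b\wedge n)$ lands inside $a\wedge n$, and nothing in axioms (UC\,1)--(UC\,4) plus the semilattice condition forces this by a splitting argument alone; in the semigroup model $\mathbb{L}(S)$ this is where local inverseness (uniqueness of idempotents of $\omega(e)$ in an $\mathscr{L}$-class) does genuine work.

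The repair is available inside your own hypotheses, and it collapses your construction onto the paper's proof. You are \emph{given} the inversive cone $\gamma$ with $m_\gamma=\{n\}$; by Remark~\ref{rmkinvcone}, $\gamma(c)=q(c,c\wedge n)\,\iota(c\wedge n,n)\,\gamma(n)$ for every $c$, so your candidate components satisfy $\mu(c)=\gamma(c)\,\gamma(n)^{-1}$. Hence the cone axiom for $\mu$ follows from the cone axiom for $\gamma$ by cancelling the isomorphism $\gamma(n)$ on the right --- equivalently, simply define $\mu:=\gamma\ast u^{-1}$ with $u:=\gamma(n)$, which is automatically a cone by \eqref{eqnbin0} and is idempotent since $\mu(n)=uu^{-1}=1_n$. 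This is exactly what the paper does; it then verifies that this $\mu$ satisfies Definition~\ref{dfnicone}(1) (an extra isomorphism component of $\mu$ would produce one for $\gamma$, a contradiction) and Definition~\ref{dfnicone}(2) (via Remark~\ref{rmkinvcone} and unique normal factorisation), a check your sketch should also include. In short: the construction you propose is the right object, but its well-definedness must be derived from $\gamma$ itself rather than from splitting uniqueness, and once you do that you have reproduced the paper's argument $\mu=\gamma\ast(\gamma(m_\gamma))^{-1}$. Note also that your appeal to ``the unique idempotent inversive cone with apex $n$'' cannot lean on Lemma~\ref{lempic}, which concerns $\mathbb{L}(S)$ for $S$ inverse; uniqueness in the abstract setting would need the same Remark~\ref{rmkinvcone} computation, so it is cleaner to avoid invoking it at all, as the paper does.
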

\begin{proof}
Let $\gamma$ be an inversive cone. Then $\gamma(m_\gamma)$ is an isomorphism. Let $u=\gamma(m_\gamma)$ and $\mu= \gamma\ast u^{-1}$. By definition $\gamma=\gamma\ast (u^{-1}u)= 	(\gamma\ast u^{-1})\ast u=\mu\ast u$. Also since $c_\mu=m_\gamma$, we have $\mu(c_{\mu})=\gamma\ast u^{-1}(m_\gamma)=\gamma(m_\gamma)\:u^{-1} =uu^{-1}=1_{c_{\mu}}$. Hence $\mu$ is an idempotent cone.
	
Now it remains to show that $\mu$ is inversive. Suppose $\mu$ does not satisfy condition (1) of Definition \ref{dfnicone}. Then there exists an object $d\in v\mathcal{C}$ such that $d\ne m_\gamma$ and $\mu(d)$ is an isomorphism. Then $\mu(d)=\gamma(d)\ast u^{-1}$ is an isomorphism. That is $\gamma(d)$ is an isomorphism and $d\ne m_\gamma$. This is a contradiction to the fact that $\gamma$ is inversive and hence $\mu $ satisfies Definition \ref{dfnicone}(1).
	
Also, for an arbitrary $c\in v\mathcal{C}$, observe that
\begin{align*}
\mu(c)&=\gamma(c)\: u^{-1}&&\\
&=q(c,c\wedge m_\gamma)\iota(c\wedge m_\gamma,m_\gamma)\gamma(m_\gamma)(\gamma(m_\gamma))^{-1}&&\text{ by Remark }\ref{rmkinvcone}\\
&=q(c,c\wedge m_\gamma)\iota(c\wedge m_\gamma,m_\gamma).&&
\end{align*}
Hence using the unique normal factorisation property, we have $\coim\mu(c)=c\wedge m_\gamma=c\wedge c_\mu=c\wedge m_\mu$. Thus, $\mu$ is an inversive cone.
	
Conversely, suppose $\gamma=\mu\ast u$ where $\mu$ is an idempotent inversive cone and $u$ is an isomorphism in $\mathcal{C}$. Then arguing similarly as above, we can see $\gamma$ is a cone satisfying Definition \ref{dfnicone}(1) with $m_\gamma=c_\mu=m_\mu$ and $\gamma(m_\gamma)=u$. Also, for an arbitrary $c\in v\mathcal{C}$, observe that
	\begin{align*}
	\gamma(c)&=\mu(c)\: u&&\\
	&=q(c,c\wedge m_\mu)\iota(c\wedge m_\mu,m_\mu)\: u&&\text{by Remark }\ref{rmkinvcone}\\
	&=q(c,c\wedge m_\gamma)\iota(c\wedge m_\gamma,m_\gamma) \gamma(m_\mu)&&\text{since }m_\gamma=m_\mu.
	\end{align*}
Since $\iota(c\wedge m_\gamma,m_\gamma) \gamma(m_\mu)$ is a monomorphism, using the unique normal factorisation property, we have $\coim\gamma(c)=c\wedge m_\gamma$. Hence $\gamma$ is an inversive cone.
\end{proof}

\begin{rmk}\label{rmkuninvcone}
In an inversive category, since there is an associated {inversive} idempotent cone $\mu_c$ for each object $c$, the above lemma can be strengthened as follows: a cone $\gamma$ is inversive if and only if $\gamma$ can be uniquely represented as $\gamma=\mu_{m_\gamma} \ast \gamma(m_\gamma)$.	
\end{rmk}

\begin{lemma}\label{lemidinvcone}
	Given an idempotent inversive cone $\mu$ and a retraction $e\colon c_\mu \to d$, the cone $\nu=\mu\ast e$ is an idempotent inversive cone with apex $d$.
\end{lemma}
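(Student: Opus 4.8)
The plan is to verify directly that $\nu=\mu\ast e$ is an idempotent inversive cone with apex $d$. First observe that $e\colon c_\mu\to d$, being a retraction, splits $\iota(d,c_\mu)$, so $d\subseteq c_\mu$ and $\iota(d,c_\mu)e=1_d$; as a split epimorphism $e$ is in particular an epimorphism, so by \eqref{eqnbin0} the map $\nu=\mu\ast e$ is a well-defined normal cone with apex $c_\nu=d$. For idempotency I would use Lemma~\ref{lemrs} and merely check $\nu(d)=1_d$: since $d\subseteq c_\mu$, the cone law gives $\mu(d)=\iota(d,c_\mu)\mu(c_\mu)=\iota(d,c_\mu)$ (using $\mu(c_\mu)=1_{c_\mu}$ because $\mu$ is idempotent), whence $\nu(d)=\mu(d)e=\iota(d,c_\mu)e=1_d$.

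The heart of the argument is to compute the component $\nu(c)$ at an arbitrary object $c$. Since $\mu$ is an idempotent inversive cone with $m_\mu=\{c_\mu\}$, Remark~\ref{rmkinvcone} yields $\mu(c)=q(c,c\wedge c_\mu)\iota(c\wedge c_\mu,c_\mu)$, and hence
\[
\nu(c)=\mu(c)e=q(c,c\wedge c_\mu)\,\iota(c\wedge c_\mu,c_\mu)\,e .
\]
This is a composite of retractions and inclusions, so it is a morphism of the core $\langle\mathcal{C}\rangle$; applying the inversive factorisation (condition (IC 4) of Definition~\ref{dfnicat}) and using $d\leq c_\mu$ to collapse the iterated meet via $(c\wedge c_\mu)\wedge d=c\wedge d$, I obtain
\[
\nu(c)=q(c,c\wedge d)\,\iota(c\wedge d,d).
\]

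This display is a normal factorisation of $\nu(c)$ with trivial isomorphism part, so by the uniqueness of normal factorisations (condition (IC 3)) its coimage is the codomain $c\wedge d$ of the retraction. Moreover $\nu(c)$ is an isomorphism exactly when both $q(c,c\wedge d)$ and $\iota(c\wedge d,d)$ are identities, i.e.\ when $c\wedge d=c=d$; hence $m_\nu=\{d\}$ is a singleton and $\coim\nu(c)=c\wedge d=c\wedge m_\nu$. Both clauses of Definition~\ref{dfnicone} thus hold, so $\nu$ is an idempotent inversive cone with apex $d$ (and, by (IC 5), $\nu=\mu_d$).

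I expect the middle step to be the main obstacle: one must first recognise $\nu(c)$ as a genuine core morphism before invoking the inversive factorisation, and then track the meets carefully, using $d\leq c_\mu$ so that the alternating inclusion–retraction word collapses through $c\wedge d$. Once the clean formula $\nu(c)=q(c,c\wedge d)\iota(c\wedge d,d)$ is in hand, verifying the two inversive conditions from the uniqueness of normal factorisations is routine.
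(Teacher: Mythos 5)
Your proof is correct and follows essentially the same route as the paper's: both express $\mu(c)$ via Remark~\ref{rmkinvcone}, recognise $\nu(c)=q(c,c\wedge c_\mu)\,\iota(c\wedge c_\mu,c_\mu)\,q(c_\mu,d)$ as a morphism of the core $\langle\mathcal{C}\rangle$, and apply the inversive factorisation (IC~4) together with the meet collapse $c\wedge c_\mu\wedge d=c\wedge d$ to obtain $\nu(c)=q(c,c\wedge d)\,\iota(c\wedge d,d)$, from which both clauses of Definition~\ref{dfnicone} follow. The only cosmetic difference is that where you invoke uniqueness of normal factorisations (IC~3) to conclude that $\nu(c)$ can be an isomorphism only when $c\wedge d=c=d$, the paper cites Remark~\ref{rmkinvfact2} (the only isomorphisms in the core are identities) --- the same fact in different clothing.
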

\begin{proof}
	Since $\mu$ is an idempotent inversive cone $\mu(c)=q(c,c\wedge c_\mu)\iota(c\wedge c_\mu, c_\mu)$ and also $\mu(c_\mu)=1_{c_\mu}$. Then
	\begin{align*}
	\nu(d)&=\mu\ast e (d)= \mu(d)\: e\\
	&=q(d,d\wedge c_\mu)\:\iota(d\wedge c_\mu, c_\mu)\: q(c_\mu,d)	&&\text{since }e= q(c_\mu,d)\\
	&=q(d,d)\:\iota(d,c_\mu)\:q(c_\mu,d) &&\text{since }d\wedge c_\mu=d\\
	&=1_d  &&\text{since }\iota(d,c_\mu)\:q(c_\mu,d)=1_d.
	\end{align*}
	Hence $\nu$ is an idempotent cone with apex $d$. Now if there exists $d' \in m_\nu$, then $\nu(d')$ is an isomorphism. Then
	$$\nu(d')=\mu(d')\: q= q(d',d'\wedge c_\mu)\:\iota(d'\wedge c_\mu, c_\mu)\:q(c_\mu,d)).$$
	Now since $\mathcal{C}$ is inversive and $\nu(d')\in \langle \mathcal{C} \rangle$, by (IC 4) and Remark \ref{rmkinvfact2}, we have $d'=d$ and $\nu(d')=1_{d}$. Hence $\nu$ satisfies Definition \ref{dfnicone}(1).
	
	Also, $\nu(c)=q(c,c\wedge c_\mu)\iota(c\wedge c_\mu, c_\mu)q(c_\mu,c_\nu)$ and so $\nu(c)\in \langle\mathcal{C} \rangle$. By (IC 4), the morphism $\nu(c)$ has a unique inversive factorisation. Since $c\wedge c_\mu \wedge c_\nu = c\wedge (c_\mu \wedge c_\nu)= c \wedge c_\nu$, we have
	$$\nu(c)=q(c,c\wedge c_\nu)\iota(c\wedge c_\nu,c_\nu).$$
	Hence $\coim\nu(c)=c\wedge c_\nu=c\wedge m_\nu$ and the cone $\nu$ satisfies Definition \ref{dfnicone}(2).
\end{proof}

Now, given an inversive category $\mathcal{C}$, we build an inverse semigroup from its inversive cones. Let $\gamma,\delta$ be inversive cones in $\mathcal{C}$. As in \eqref{eqnbin}, we define
\begin{equation*}
\gamma \cdot\delta:=\gamma \ast(\delta(c_{\gamma}))^\circ
\end{equation*}
where $(\delta(c_{\gamma}))^\circ$ is the epimorphic component of the morphism $\delta(c_{\gamma})$. Then clearly $\gamma\cdot\delta$ is a cone. We need to verify that it is an inversive cone.
\begin{proposition}
	$\gamma\cdot\delta$ as defined above is an inversive cone.	
\end{proposition}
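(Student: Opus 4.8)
The plan is to reduce $\gamma\cdot\delta$ to the canonical shape ``idempotent inversive cone composed with an isomorphism'' recognised by Lemma~\ref{leminvcone}. Before invoking that lemma I would record that an inversive category is in particular unambiguous: conditions (UC~1)--(UC~3) are literally (IC~1)--(IC~3), while (UC~4) follows from (IC~5), since the inversive idempotent cone $\mu_c$ furnished for each object $c$ is a normal cone with $\mu_c(c)=1_c$ by Lemma~\ref{lemrs}. Hence Lemma~\ref{leminvcone} together with its sharpening in Remark~\ref{rmkuninvcone} is available. Writing $m_\gamma$ and $c_\gamma$ for the $M$-set and apex of $\gamma$, Remark~\ref{rmkuninvcone} gives $\gamma=\mu\ast u$, where $\mu:=\mu_{m_\gamma}$ is an idempotent inversive cone with apex $m_\gamma$ and $u:=\gamma(m_\gamma)\colon m_\gamma\to c_\gamma$ is an isomorphism.

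Next I would exploit the associativity of the operation $\ast$ from \eqref{eqnbin0}: for an epimorphism $\varphi$ one has $(\mu\ast u)\ast\varphi=\mu\ast(u\varphi)$, both sides sending $c\mapsto\mu(c)\,u\,\varphi$. Applying this with $\varphi:=(\delta(c_\gamma))^\circ$, which is an epimorphism with domain $c_\gamma$, I obtain
\[
\gamma\cdot\delta=\gamma\ast(\delta(c_\gamma))^\circ=(\mu\ast u)\ast(\delta(c_\gamma))^\circ=\mu\ast\bigl(u\,(\delta(c_\gamma))^\circ\bigr).
\]
The morphism $\psi:=u\,(\delta(c_\gamma))^\circ\colon m_\gamma\to d$ is a composite of an isomorphism and an epimorphism, hence an epimorphism, and its domain $m_\gamma$ is exactly $c_\mu$.

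Finally I would normal-factor $\psi$. Because $\psi$ is an epimorphism, its (unique, by unambiguity) normal factorisation has trivial inclusion part: if $\psi=\theta\sigma\iota$, then $\iota$ is a monomorphism (being an inclusion) and, cancelling the epimorphism $\psi$ on the left, also an epimorphism, so $\iota$ is an isomorphism and therefore an identity in the strict preorder of inclusions. Thus $\psi=e\sigma$ with $e\colon m_\gamma\to c'$ a retraction and $\sigma\colon c'\to d$ an isomorphism, whence
\[
\gamma\cdot\delta=\mu\ast(e\sigma)=(\mu\ast e)\ast\sigma.
\]
By Lemma~\ref{lemidinvcone} the cone $\mu\ast e$ is an idempotent inversive cone (as $e$ is a retraction out of $c_\mu$), and $\sigma$ is an isomorphism; so Lemma~\ref{leminvcone} identifies $(\mu\ast e)\ast\sigma=\gamma\cdot\delta$ as an inversive cone, as required. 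I expect the only genuinely delicate step to be the justification that the inclusion part of the normal factorisation of $\psi$ is trivial; everything else is bookkeeping with $\ast$ together with the two preparatory lemmas.
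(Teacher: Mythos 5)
Your proposal is correct and follows essentially the same route as the paper's proof: decompose $\gamma=\mu\ast u$ with $\mu$ idempotent inversive (the paper takes $\nu:=\gamma\ast(\gamma(m_\gamma))^{-1}$, which by (IC~5) is exactly your $\mu_{m_\gamma}$), pass the epimorphic component through $\ast$, factor the epimorphism $u\,(\delta(c_\gamma))^\circ$ as a retraction followed by an isomorphism, and conclude via Lemmas~\ref{lemidinvcone} and~\ref{leminvcone}; your preliminary check that inversive categories are unambiguous, and your argument that the inclusion part of the factorisation is trivial, are details the paper leaves implicit or merely asserts. One small repair is needed in the latter step: ``monomorphism and epimorphism implies isomorphism'' is not a valid inference in an abstract category, so instead use that the inclusion $\iota\colon c'\to c$ splits (IC~2), giving a retraction $\theta$ with $\iota\theta=1_{c'}$; then $\iota(\theta\iota)=(\iota\theta)\iota=\iota=\iota 1_c$, and left-cancelling the epimorphism $\iota$ (whose epimorphy you already established by cancelling $\psi$) yields $\theta\iota=1_c$, so $\iota$ is an isomorphism and hence an identity, the preorder of inclusions being strict. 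With that one-line fix your proof is complete and coincides in structure with the paper's.
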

\begin{proof}
	By Lemma \ref{leminvcone}, it suffices to show that $\gamma\cdot\delta$ can be represented as $\mu\ast u$ where $\mu$ is an idempotent inversive cone and $u$ is an isomorphism in $\mathcal{C}$. Since $\gamma$ is an inversive cone, $\nu:=\gamma\ast (\gamma(m_\gamma))^{-1}$ is an idempotent cone with apex $m_\gamma$ (as in the proof of Lemma \ref{lemidinvcone}).  Now observe that
	$$\gamma \cdot\delta=\gamma \ast(\delta(c_{\gamma}))^\circ =\gamma\ast (\gamma(m_\gamma))^{-1}\gamma(m_\gamma)(\delta(c_{\gamma}))^\circ=\nu\ast \gamma(m_\gamma)(\delta(c_{\gamma}))^\circ.$$	
	Then since $\gamma(m_\gamma)$ is an isomorphism and $(\delta(c_{\gamma}))^\circ$ is an epimorphism, the morphism $\gamma(m_\gamma)(\delta(c_{\gamma}))^\circ$ is an epimorphism and has a unique normal factorisation of the form $qu$, where $q$ is a retraction and $u$ is an isomorphism. Let $\mu= \nu\ast q$. Then
	$$\gamma \cdot\delta=\nu\ast qu=(\nu\ast q)\ast u=\mu\ast u.$$
	By Lemma \ref{lemidinvcone}, the idempotent cone $\mu$ is inversive and so by Lemma \ref{leminvcone}, we  see that  $\gamma\cdot\delta$ is an inversive cone.
\end{proof}

Thus, given an inversive category $\mathcal{C}$,  the set $\widetilde{\mathcal{C}}$ of all its inversive cones forms a subsemigroup of the regular semigroup $\widehat{\mathcal{C}}$ of all its normal cones.

\begin{proposition}
	The subsemigroup $\widetilde{\mathcal{C}}$ is inverse.	
\end{proposition}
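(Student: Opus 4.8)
The plan is to apply the characterisation of inverse semigroups in Theorem~\ref{thminv}(3): since $\widetilde{\mathcal{C}}$ is a subsemigroup of the regular semigroup $\widehat{\mathcal{C}}$, it suffices to show that $\widetilde{\mathcal{C}}$ is regular in its own right and that its idempotents form a semilattice.

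First I would pin down the idempotents of $\widetilde{\mathcal{C}}$. By Lemma~\ref{lemrs}, a cone $\gamma$ is idempotent precisely when $\gamma(c_\gamma)=1_{c_\gamma}$; for an inversive cone this forces $c_\gamma\in m_\gamma$, and since $m_\gamma$ is a singleton we obtain $m_\gamma=\{c_\gamma\}$. Hence the idempotents of $\widetilde{\mathcal{C}}$ are exactly the \emph{idempotent} inversive cones, which by (IC 5) are precisely the cones $\mu_c$, one for each object $c\in v\mathcal{C}$; the assignment $c\mapsto\mu_c$ is then a bijection onto $E(\widetilde{\mathcal{C}})$.

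Next I would compute the product $\mu_c\cdot\mu_{c'}$. Using Remark~\ref{rmkinvcone} together with $\mu_{c'}(c')=1_{c'}$, the component $\mu_{c'}(c)$ equals $q(c,c\wedge c')\,\iota(c\wedge c',c')$, which already displays a normal factorisation; by the uniqueness in (IC 3) its epimorphic component is the retraction $q(c,c\wedge c')$. Therefore $\mu_c\cdot\mu_{c'}=\mu_c\ast q(c,c\wedge c')$, and Lemma~\ref{lemidinvcone} together with the uniqueness in (IC 5) identifies this cone as $\mu_{c\wedge c'}$. Since $\wedge$ is a commutative, associative and idempotent operation on $v\mathcal{C}$, the map $c\mapsto\mu_c$ becomes an isomorphism of the semilattice $(v\mathcal{C},\wedge)$ onto $E(\widetilde{\mathcal{C}})$; in particular $E(\widetilde{\mathcal{C}})$ is a semilattice. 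I expect this product computation to be the main technical step, since it is where one must correctly extract the epimorphic component and then control the resulting cones through both uniqueness axioms.

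Finally I would establish regularity by exhibiting inverses explicitly. Writing an arbitrary $\gamma\in\widetilde{\mathcal{C}}$ as $\gamma=\mu_m\ast u$ with $m:=m_\gamma$, $c:=c_\gamma$ and $u:=\gamma(m)\colon m\to c$ an isomorphism (Remark~\ref{rmkuninvcone}), I would set $\delta:=\mu_c\ast u^{-1}$, which is again an inversive cone by Lemma~\ref{leminvcone}. A short calculation with the operation $\ast$ gives $\gamma\cdot\delta=\mu_m$ and $\delta\cdot\gamma=\mu_c$, whence $\gamma\cdot\delta\cdot\gamma=\mu_m\ast u=\gamma$ and dually $\delta\cdot\gamma\cdot\delta=\delta$. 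Thus every element of $\widetilde{\mathcal{C}}$ has an inverse lying in $\widetilde{\mathcal{C}}$, so $\widetilde{\mathcal{C}}$ is regular, and Theorem~\ref{thminv}(3) yields that $\widetilde{\mathcal{C}}$ is inverse.
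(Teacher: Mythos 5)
Your proof is correct, but it takes a genuinely different route from the paper's. The paper verifies condition (2) of Theorem~\ref{thminv}: using Lemma~\ref{lemgc}(1) it argues that any idempotent cone $\mathscr{L}$-related to an inversive cone $\gamma$ must have apex $c_\gamma$, so that (IC 5) forces each $\mathscr{L}$-class of $\widetilde{\mathcal{C}}$ to contain exactly one idempotent, while Remark~\ref{rmkuninvcone} supplies $\gamma\ast(\gamma(m_\gamma))^{-1}$ as the unique idempotent in the $\mathscr{R}$-class of $\gamma$. (Incidentally, the paper cites Theorem~\ref{thminv}(3) at the end, but its argument clearly invokes condition (2); it is your proof that genuinely uses condition (3).) You instead establish regularity by exhibiting the explicit inverse $\delta=\mu_{c_\gamma}\ast u^{-1}$ of $\gamma=\mu_{m_\gamma}\ast u$, and you show the idempotents form a semilattice via the computation $\mu_c\cdot\mu_{c'}=\mu_{c\wedge c'}$; both steps check out, since $(\mu_{c'}(c))^\circ=q(c,c\wedge c')$ by the uniqueness in (IC 3), and Lemma~\ref{lemidinvcone} together with the uniqueness in (IC 5) identifies $\mu_c\ast q(c,c\wedge c')$ with $\mu_{c\wedge c'}$, while the verifications $\gamma\cdot\delta=\mu_{m_\gamma}$, $\delta\cdot\gamma=\mu_{c_\gamma}$ follow directly from \eqref{eqnbin0} and \eqref{eqnbin}. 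What the paper's route buys is brevity: it reuses the Green's-relations machinery already developed for $\widehat{\mathcal{C}}$ wholesale. What your route buys is explicitness: you obtain as by-products a closed formula for the inverse of an arbitrary inversive cone and the isomorphism $E(\widetilde{\mathcal{C}})\cong(v\mathcal{C},\wedge)$, a nice structural fact (mirroring that the idempotents of an inverse semigroup form the semilattice of its principal left ideals) which the paper's argument never makes visible.
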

\begin{proof}
Given an inversive cone $\gamma\in\widetilde{\mathcal{C}}$ with apex $d$, Lemma~\ref{lemgc}(1) implies that every idempotent cone $\mu$ such that $\mu\mathrel{\mathscr{L}}\gamma$ in $\widehat{\mathcal{C}}$ satisfies $c_mu=d$. By (IC 5), there exists a unique idempotent inversive cone with this property. Hence every $\mathscr{L}$-class in $\widetilde{\mathcal{C}}$ contains a unique idempotent. Also, the $\mathscr{R}$-class of $\gamma$ in $\widetilde{\mathcal{C}}$ contains a unique idempotent, namely $\gamma\ast(\gamma(m_\gamma))^{-1}$, see Remark \ref{rmkuninvcone}. By Theorem \ref{thminv}(3), the semigroup $\widetilde{\mathcal{C}}$ is inverse.
\end{proof}

The following are immediate specialisations of Theorem \ref{thmls} and Corollary \ref{corls}.
\begin{thm}\label{thmvarPsi}
	Let $\mathcal{C}$ be an inversive category and $\widetilde{\mathcal{C}}$ the inverse semigroup of its inversive cones. Define a functor $\varPsi$ between the categories $\mathcal{C}$ and $\mathbb{L}(\widetilde{\mathcal{C}})$ as follows:
\begin{align*}
	v\varPsi(c)&:= \widetilde{\mathcal{C}}\mu&&\text{for }c\in v\mathcal{C},\\
	\varPsi(f)&:= \rho(\mu,\mu\ast f^\circ, \nu)&&\text{for }f\in\mathcal{C}(c,d),
\end{align*}
where $\mu,\nu\in E(\widetilde{\mathcal{C}})$ are such that $c_\mu=c$ and $c_{\nu}=d$. Then $\varPsi$ is an isomorphism of inversive categories.
\end{thm}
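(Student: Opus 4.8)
The plan is to deduce the statement from Theorem~\ref{thmls} by observing that an inversive category is in particular an unambiguous category, and that the inverse semigroup $\widetilde{\mathcal{C}}$ sits inside $\widehat{\mathcal{C}}$ so neatly that $\varPsi$ is literally the functor $F$ of Theorem~\ref{thmls} with $\widehat{\mathcal{C}}$ replaced by $\widetilde{\mathcal{C}}$. First I would check that (IC\,1)--(IC\,5) entail (UC\,1)--(UC\,4): an so-category is a category with subobjects, so (IC\,1) gives (UC\,1); (IC\,2) and (IC\,3) are (UC\,2) and (UC\,3) verbatim; and (IC\,5) supplies, for each object $c$, the inversive idempotent cone $\mu_c$, which is a normal cone with $\mu_c(c)=1_c$, hence (UC\,4). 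Thus Theorem~\ref{thmls} applies and yields an isomorphism of unambiguous categories $F\colon\mathcal{C}\to\mathbb{L}(\widehat{\mathcal{C}})$, where one is free to take $vF(c)=\widehat{\mathcal{C}}\mu_c$ and $F(f)=\rho(\mu_c,\mu_c\ast f^\circ,\mu_d)$, since $\mu_c,\mu_d\in E(\widetilde{\mathcal{C}})\subseteq E(\widehat{\mathcal{C}})$ have the required apices.

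Next I would pin down the objects and morphisms of $\mathbb{L}(\widetilde{\mathcal{C}})$. By (IC\,5) the idempotents of $\widetilde{\mathcal{C}}$ are exactly the cones $\mu_c$, $c\in v\mathcal{C}$, and two of them determine the same principal left ideal iff they are $\mathscr{L}$-related, iff (Lemma~\ref{lemgc}(1)) they share an apex; hence $v\mathbb{L}(\widetilde{\mathcal{C}})=\{\widetilde{\mathcal{C}}\mu_c\}$ is in inclusion-preserving bijection with $v\mathcal{C}$, so $v\varPsi$ is a well-defined bijection on objects. For the morphisms the key fact I would establish is that each building block $\mu_c\ast f^\circ$ (with $f\in\mathcal{C}(c,d)$) is an \emph{inversive} cone: writing a normal factorisation $f=\theta\sigma\iota$ gives $f^\circ=\theta\sigma$, so $\mu_c\ast f^\circ=(\mu_c\ast\theta)\ast\sigma$, where $\mu_c\ast\theta$ is an idempotent inversive cone by Lemma~\ref{lemidinvcone} and $\sigma$ is an isomorphism, whence $\mu_c\ast f^\circ\in\widetilde{\mathcal{C}}$ by Lemma~\ref{leminvcone}. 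A short computation (using $\mu_c(c)=1_c$ together with the fact that the component of $\mu_d$ at the image of $f$ is an inclusion) then shows $\mu_c\ast f^\circ\in\mu_c\widetilde{\mathcal{C}}\mu_d$, so $\varPsi(f)$ is a genuine morphism of $\mathbb{L}(\widetilde{\mathcal{C}})$.

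The crux is the equality of the two sandwich sets $\mu_c\widehat{\mathcal{C}}\mu_d=\mu_c\widetilde{\mathcal{C}}\mu_d$. The inclusion $\supseteq$ is trivial, while $\subseteq$ holds because full faithfulness of $F$ means precisely that every element of $\mu_c\widehat{\mathcal{C}}\mu_d$ has the form $\mu_c\ast f^\circ$ for some $f\in\mathcal{C}(c,d)$, and we have just seen each such cone lies in $\widetilde{\mathcal{C}}$. Since $\widetilde{\mathcal{C}}$ is a subsemigroup of $\widehat{\mathcal{C}}$, composition in the two ideal categories agrees; therefore the object bijection $\widehat{\mathcal{C}}\mu_c\leftrightarrow\widetilde{\mathcal{C}}\mu_c$ together with the now-identical hom-sets gives a canonical, inclusion-preserving isomorphism $\mathbb{L}(\widehat{\mathcal{C}})\cong\mathbb{L}(\widetilde{\mathcal{C}})$ carrying $F$ to $\varPsi$. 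Consequently $\varPsi$ is an isomorphism of categories; being inclusion preserving and bijective on objects between so-categories, it is automatically an order isomorphism of the underlying semilattices and hence respects meets, so it is an isomorphism of inversive categories.

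I expect the main obstacle to be exactly the bridging step: proving that $\mu_c\ast f^\circ$ is always inversive and that this forces $\mu_c\widehat{\mathcal{C}}\mu_d=\mu_c\widetilde{\mathcal{C}}\mu_d$. Everything after this is bookkeeping transported from Theorem~\ref{thmls}. One point worth highlighting is that, unlike in the general unambiguous setting, no choice of representative idempotent is involved, because by (IC\,5) the cone $\mu_c$ is the \emph{unique} inversive idempotent cone with apex $c$; this removes the well-definedness checks for $v\varPsi$ and $\varPsi$ that would otherwise be needed and is the source of the ``immediate specialisation'' alluded to before the statement.
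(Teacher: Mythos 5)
Your proposal is correct and follows exactly the route the paper intends: the paper offers no separate proof, merely asserting the theorem as an ``immediate specialisation'' of Theorem~\ref{thmls}, and your argument supplies precisely the details that assertion suppresses. In particular, your bridging step --- using Lemmas~\ref{lemidinvcone} and~\ref{leminvcone} to show each $\mu_c\ast f^\circ$ is inversive, and fullness of $F$ to get $\mu_c\widehat{\mathcal{C}}\mu_d=\mu_c\widetilde{\mathcal{C}}\mu_d$, so that $\mathbb{L}(\widehat{\mathcal{C}})$ and $\mathbb{L}(\widetilde{\mathcal{C}})$ agree on the relevant objects and hom-sets --- is exactly what is needed to pass from the semigroup $\widehat{\mathcal{C}}$ of Theorem~\ref{thmls} to the semigroup $\widetilde{\mathcal{C}}$ in the statement.
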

\begin{corollary}
	A category is inversive if and only if it is isomorphic to the category $\mathbb{L}(S)$ for some inverse semigroup $S$.	
\end{corollary}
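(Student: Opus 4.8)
The plan is to establish the two implications separately, letting the results already assembled do essentially all of the work.

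For the ``if'' direction I would start from a category $\mathcal{C}$ admitting an inclusion-preserving isomorphism onto $\mathbb{L}(S)$ with $S$ inverse. Everything proved in this subsection up to Proposition~\ref{proC} shows that $\mathbb{L}(S)$ satisfies (IC 1)--(IC 5) and is therefore inversive; so it only remains to check that each of these five conditions is invariant under an inclusion-preserving isomorphism. This is a routine bookkeeping argument: such an isomorphism restricts to an isomorphism of the inclusion subcategories and hence to a semilattice isomorphism of the object sets, which gives (IC 1) and preserves meets; it carries the unique retraction splitting a given inclusion to the unique retraction on the image, giving (IC 2); it transports unique normal factorisations and inversive factorisations in the core term by term, giving (IC 3) and (IC 4); and it maps the unique inversive idempotent cone at an object to the unique one at the image object, giving (IC 5). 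I would record these correspondences one condition at a time, but none of them presents any difficulty, so $\mathcal{C}$ is inversive.

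For the ``only if'' direction, given an inversive category $\mathcal{C}$ I would take $S:=\widetilde{\mathcal{C}}$, the set of all inversive cones in $\mathcal{C}$. The two propositions immediately preceding Theorem~\ref{thmvarPsi} show that $\widetilde{\mathcal{C}}$ is a subsemigroup of the regular semigroup $\widehat{\mathcal{C}}$ and that it is in fact an inverse semigroup. Theorem~\ref{thmvarPsi} then supplies the functor $\varPsi\colon\mathcal{C}\to\mathbb{L}(\widetilde{\mathcal{C}})$ and asserts that it is an isomorphism of inversive categories. Hence $\mathcal{C}\cong\mathbb{L}(S)$ with $S$ inverse, which is exactly what is required. (This is the precise inversive analogue of the passage from Theorem~\ref{thmls} to Corollary~\ref{corls}.)

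The only point calling for any care---and the one I expect to be the mild main obstacle---is the invariance claim in the ``if'' direction. Since, however, the morphisms of $\mathbf{IC}$ are by definition inclusion-preserving and meet-preserving, an isomorphism of inversive categories automatically transports the entire inclusion-and-meet structure, so the verification reduces to the condition-by-condition check sketched above rather than to any genuinely new argument.
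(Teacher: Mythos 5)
Your proposal is correct and follows the paper's own route exactly: the ``only if'' direction is Theorem~\ref{thmvarPsi} applied to the inverse semigroup $\widetilde{\mathcal{C}}$ of inversive cones, and the ``if'' direction is the earlier verification that $\mathbb{L}(S)$ satisfies (IC~1)--(IC~5) for $S$ inverse, together with invariance of these conditions under isomorphism of inversive categories (which the paper leaves implicit, treating the corollary as an immediate specialisation of Theorem~\ref{thmls} and Corollary~\ref{corls}). Your condition-by-condition transport argument is a harmless elaboration of that implicit step.
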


\begin{rmk}
	Observe that we can prove the exact dual results for the category  $\mathbb{R}(S)$ of principal right ideals of an inverse semigroup $S$.	
\end{rmk}

Thus, given an inversive category $\mathcal{C}$, we have an associated inverse semigroup $\widetilde{\mathcal{C}}$. Now we proceed to show that this association is also functorial. To this end, we begin with the following lemma which easily follows from Lemma \ref{leminvcone}.
\begin{lemma}\label{lemphiinvcone}
	Suppose $\Phi$ is an inversive functor between two inversive categories $\mathcal{C}_1$ and $\mathcal{C}_2$, if $\gamma=\mu_{m_\gamma}\ast u$ is an inversive cone in $\mathcal{C}_1$, then
	$$\Phi(\gamma):=\mu_{v\Phi(m_\gamma)} \ast \Phi(u)$$
	is an inversive cone in the inversive category $\mathcal{C}_2$.
\end{lemma}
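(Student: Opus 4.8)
The plan is to reduce the claim entirely to the characterisation of inversive cones supplied by Lemma~\ref{leminvcone}. The first step is to note that the target category $\mathcal{C}_2$, being inversive, is in particular unambiguous: (IC~1)--(IC~3) are literally (UC~1)--(UC~3), while (UC~4) holds because the inversive idempotent cone $\mu_c$ furnished by (IC~5) is a normal cone with $\mu_c(c)=1_c$ (it is idempotent, so the second half of Lemma~\ref{lemrs} applies, and it possesses an isomorphism component, namely at its $M$-set point). Consequently Lemma~\ref{leminvcone} is available in $\mathcal{C}_2$, and it suffices to exhibit $\Phi(\gamma)$ in the form $\mu\ast v$ with $\mu$ an idempotent inversive cone and $v$ an isomorphism of $\mathcal{C}_2$.

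Next I would verify that the defining expression $\mu_{v\Phi(m_\gamma)}\ast\Phi(u)$ is exactly of this shape. By Remark~\ref{rmkuninvcone}, the representation $\gamma=\mu_{m_\gamma}\ast u$ with $u=\gamma(m_\gamma)$ is the \emph{unique} such representation, so $m_\gamma$ and $u$ — and hence $v\Phi(m_\gamma)$ and $\Phi(u)$ — are determined by $\gamma$; this secures the well-definedness of $\Phi(\gamma)$. Since $m_\gamma$ is the unique object at which $\gamma$ is an isomorphism, $u$ is an isomorphism in $\mathcal{C}_1$, and as $\Phi$ is a functor, $\Phi(u)$ is an isomorphism (hence an epimorphism) in $\mathcal{C}_2$. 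Moreover, (IC~5) applied to $\mathcal{C}_2$ provides the unique inversive idempotent cone $\mu_{v\Phi(m_\gamma)}$ with apex $v\Phi(m_\gamma)$.

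It then remains to confirm that the binary operation $\ast$ of \eqref{eqnbin0} is legitimately formed: the apex of $\mu_{v\Phi(m_\gamma)}$ is $v\Phi(m_\gamma)$, which is precisely the domain of $\Phi(u)\colon v\Phi(m_\gamma)\to v\Phi(c_\gamma)$, so $\mu_{v\Phi(m_\gamma)}\ast\Phi(u)$ is a well-defined normal cone by \eqref{eqnbin0}. With $\mu_{v\Phi(m_\gamma)}$ an idempotent inversive cone and $\Phi(u)$ an isomorphism, Lemma~\ref{leminvcone} immediately yields that $\Phi(\gamma)=\mu_{v\Phi(m_\gamma)}\ast\Phi(u)$ is an inversive cone in $\mathcal{C}_2$, as required.

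The computation is short, and the only points demanding care are bookkeeping ones: verifying that $\mathcal{C}_2$ meets the hypotheses of Lemma~\ref{leminvcone}, and checking that the apex of the chosen idempotent cone coincides with the domain of $\Phi(u)$ so that $\ast$ can be applied. I do not anticipate a genuine obstacle here; in fact the meet-preservation built into the notion of an inversive functor is not even invoked for this particular lemma — only the preservation of isomorphisms by the functor $\Phi$ and the existence clause (IC~5) in $\mathcal{C}_2$ are used.
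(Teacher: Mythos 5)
Your proposal is correct and follows exactly the route the paper intends: the paper offers no separate argument for this lemma beyond remarking that it ``easily follows from Lemma~\ref{leminvcone}'', and your write-up simply supplies the bookkeeping (that $\mathcal{C}_2$ is unambiguous, that $\Phi(u)$ is an isomorphism whose domain is the apex of $\mu_{v\Phi(m_\gamma)}$, and that Remark~\ref{rmkuninvcone} makes $\Phi(\gamma)$ well defined) needed to invoke that lemma. Your closing observation that only functoriality and (IC~5), not meet-preservation, are used here is accurate and consistent with the paper.
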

\begin{lemma}\label{lemphiinvcone1}
	If $\Phi$ is an inversive functor between two inversive categories $\mathcal{C}_1$ and $\mathcal{C}_2$, then the mapping $\phi:\widetilde{\mathcal{C}_1}\to \widetilde{\mathcal{C}_2}$ defined as
	$$\phi\colon\gamma\mapsto\Phi(\gamma)$$
	is a semigroup homomorphism.
\end{lemma}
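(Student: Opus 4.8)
The plan is to reduce everything to a single ``component commutation'' identity and then invoke the fact that an inversive cone is pinned down by its behaviour on its $M$-set. First I would record three preservation properties of the inversive functor $\Phi$. Since $\Phi$ is inclusion preserving and a functor, applying $\Phi$ to a splitting $\iota(c',c)\,q=1_{c'}$ yields $\iota(v\Phi(c'),v\Phi(c))\,\Phi(q)=1_{v\Phi(c')}$, so by the unique splitting axiom (IC 2) in $\mathcal{C}_2$ the morphism $\Phi(q)$ is exactly the retraction $q(v\Phi(c),v\Phi(c'))$; thus $\Phi$ sends retractions to retractions. Applying $\Phi$ to a normal factorisation $\theta=\theta_0\sigma\iota_0$ then produces a retraction--isomorphism--inclusion decomposition of $\Phi(\theta)$, so by the uniqueness axiom (IC 3) we get $(\Phi(\theta))^\circ=\Phi(\theta^\circ)$: that is, $\Phi$ commutes with taking epimorphic components. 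Functoriality also gives that $\Phi$ preserves isomorphisms, and $\Phi$ preserves meets by the defining property of an inversive functor.

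The central step is the identity $\Phi(\gamma)(v\Phi(c))=\Phi(\gamma(c))$ for every inversive cone $\gamma$ in $\mathcal{C}_1$ and every $c\in v\mathcal{C}_1$. Writing $\gamma=\mu_{m_\gamma}\ast u$ with $u=\gamma(m_\gamma)$ (Remark \ref{rmkuninvcone}), Lemma \ref{lemphiinvcone} gives $\Phi(\gamma)=\mu_{v\Phi(m_\gamma)}\ast\Phi(u)$, an inversive cone with $m_{\Phi(\gamma)}=v\Phi(m_\gamma)$, apex $c_{\Phi(\gamma)}=v\Phi(c_\gamma)$, and value $\Phi(u)$ at its $M$-set. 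Expanding both sides through the inversive-cone formula of Remark \ref{rmkinvcone}, and using retraction and meet preservation to match $q(v\Phi(c),v\Phi(c)\wedge v\Phi(m_\gamma))$ with $\Phi(q(c,c\wedge m_\gamma))$ and $v\Phi(c)\wedge v\Phi(m_\gamma)$ with $v\Phi(c\wedge m_\gamma)$, the two expressions coincide. This is the computational heart of the argument.

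With this identity in hand the homomorphism property follows by a direct evaluation. Recall $\gamma\cdot\delta=\gamma\ast(\delta(c_\gamma))^\circ$, so $(\gamma\cdot\delta)(c)=\gamma(c)(\delta(c_\gamma))^\circ$. Since $c_{\Phi(\gamma)}=v\Phi(c_\gamma)$, evaluating $\Phi(\gamma)\cdot\Phi(\delta)$ at $v\Phi(c)$ gives $\Phi(\gamma)(v\Phi(c))\,(\Phi(\delta)(v\Phi(c_\gamma)))^\circ$; applying the component identity to $\gamma$ and to $\delta$ and using $(\Phi(\delta(c_\gamma)))^\circ=\Phi((\delta(c_\gamma))^\circ)$ turns this into $\Phi(\gamma(c))\,\Phi((\delta(c_\gamma))^\circ)=\Phi((\gamma\cdot\delta)(c))$. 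On the other hand the component identity applied to the inversive cone $\gamma\cdot\delta$ gives $\Phi(\gamma\cdot\delta)(v\Phi(c))=\Phi((\gamma\cdot\delta)(c))$. Hence $\Phi(\gamma\cdot\delta)$ and $\Phi(\gamma)\cdot\Phi(\delta)$ agree at every object of the form $v\Phi(c)$.

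The step I expect to be the main obstacle is upgrading this agreement on the image of $\Phi$ to genuine equality of cones in $\mathcal{C}_2$, whose objects need not all lie in that image. Here I would use that both $\Phi(\gamma\cdot\delta)$ and $\Phi(\gamma)\cdot\Phi(\delta)$ are inversive cones, so by Remark \ref{rmkinvcone} each is completely determined by its (singleton) $M$-set together with its value there. Taking $c=m_{\gamma\cdot\delta}$ above, the common value $\Phi(\gamma\cdot\delta)(v\Phi(m_{\gamma\cdot\delta}))=\Phi((\gamma\cdot\delta)(m_{\gamma\cdot\delta}))$ is an isomorphism because $\Phi$ preserves isomorphisms; as $m_{\Phi(\gamma\cdot\delta)}=v\Phi(m_{\gamma\cdot\delta})$ and the $M$-set of an inversive cone is the unique object where its component is an isomorphism, it follows that $m_{\Phi(\gamma)\cdot\Phi(\delta)}=v\Phi(m_{\gamma\cdot\delta})$ as well. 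Thus the two inversive cones share the same $M$-set, which lies in the image of $\Phi$, and they agree there; by Remark \ref{rmkinvcone} they are equal, proving $\phi(\gamma\cdot\delta)=\phi(\gamma)\cdot\phi(\delta)$.
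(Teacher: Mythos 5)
Your proof is correct, but it takes a genuinely different route from the paper's. The paper establishes the homomorphism property by a single long equational chain carried out entirely in canonical form: it writes $\gamma_1=\mu_{m_{\gamma_1}}\ast u_1$ and $\gamma_2=\mu_{m_{\gamma_2}}\ast u_2$, computes $\gamma_1\cdot\gamma_2$ as $\mu_{c'}\ast u'$ through successive normal factorisations, applies the definition of $\Phi$ on cones from Lemma~\ref{lemphiinvcone}, and then reverses the same manipulations inside $\mathcal{C}_2$, using exactly the preservation facts you record (retractions via (IC~2), normal factorisations via (IC~3)) to reassemble $\Phi(\gamma_1)\cdot\Phi(\gamma_2)$. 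You instead isolate the pointwise identity $\Phi(\gamma)(v\Phi(c))=\Phi(\gamma(c))$, use it to show that $\Phi(\gamma\cdot\delta)$ and $\Phi(\gamma)\cdot\Phi(\delta)$ agree at every object in the image of $v\Phi$, and then upgrade to equality of cones by the rigidity principle of Remark~\ref{rmkinvcone}: an inversive cone is determined by its singleton $M$-set together with its component there, and you verify that both cones have $M$-set $v\Phi(m_{\gamma\cdot\delta})$ because that is the unique object where each has an isomorphism component. Your decomposition is more modular, and it squarely addresses a point the paper's computation sidesteps by never leaving canonical forms, namely that cones in $\mathcal{C}_2$ also have components at objects outside the image of $v\Phi$, so agreement on that image alone would not suffice; your justification of $(\Phi(\theta))^\circ=\Phi(\theta^\circ)$ via uniqueness of normal factorisation is also spelled out, where the paper merely asserts that inversive functors preserve normal factorisations. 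The trade-off is that the paper's approach never has to compare components or identify $M$-sets at all, since the canonical representation $\mu\ast u$ carries all the data through the whole computation.
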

\begin{proof}
	By Remark \ref{rmkuninvcone} and Lemma \ref{lemphiinvcone}, the mapping $\phi$ is well defined. Also, for $\gamma_1,\gamma_2\in \widetilde{\mathcal{C}_1}$
	\begin{align*}
	(\gamma_1\cdot\gamma_2)\phi
	&=\Phi(\gamma_1\cdot\gamma_2)\\
	&=\Phi(\gamma_1 \ast(\gamma_2(c_{\gamma_1}))^\circ)&&\text{using \eqref{eqnbin}}\\
	&=\Phi (\mu_{m_{\gamma_1}}\ast u_1\:(\mu_{m_{\gamma_2}}\ast u_2(c_{\gamma_1}))^\circ)&&\text{writing } \gamma_1=\mu_{m_{\gamma_1}}\ast u_1 \\
	& &&\text{and }\gamma_2=\mu_{m_{\gamma_2}}\ast u_2\text{ for}\\
    & &&\text{some isomorphisms }u_1,u_2\\
	&=\Phi (\mu_{m_{\gamma_1}}\ast u_1\:(\mu_{m_{\gamma_2}}(c_{\gamma_1})\: u_2)^\circ)&&\text{using \eqref{eqnbin0}}\\
	&=\Phi (\mu_{m_{\gamma_1}}\ast u_1\:q\: u)&&\text{where } qu \text{ is the}\\
	& &&\text{normal factorisation of }\\
	& && (\mu_{m_{\gamma_2}}(c_{\gamma_1})\: u_2)^\circ \\
	&=\Phi (\mu_{m_{\gamma_1}}\ast q'u')&&\text{where } q'u' \text{ is the}\\
	& &&\text{normal factorisation of }\\
	& &&\text{the epimorphism } u_1q u \\
	&=\Phi (\mu_{c'}\ast u')&&\text{letting } \mu_{c'}:=\mu_{m_{\gamma_1}}\ast q'\\
	&=\mu_{v\Phi(c')} \ast \Phi(u')&&\text{using Lemma \ref{lemphiinvcone}}\\
	&=\mu_{v\Phi(m_{\gamma_1})}\ast q(v\Phi(m_{\gamma_1}),v\Phi(c'))\:\Phi(u')&&\text{using Lemma \ref{lemidinvcone}}\\
	&=\mu_{v\Phi(m_{\gamma_1})}\ast \Phi(q')\:\Phi(u')&&\text{using (IC 2) in }\mathcal{C}_2\\
	&=\mu_{v\Phi(m_{\gamma_1})}\ast \Phi(u_1)\:\Phi(q)\Phi(u)&&\text{since inversive functors}\\
	& && \text{preserve normal factorisations}\\
	&=\mu_{v\Phi(m_{\gamma_1})}\ast \Phi(u_1)\:\Phi((\mu_{m_{\gamma_2}}(c_{\gamma_1})\: u_2)^\circ)&&\text{---\texttt{"}---}\\
	&=\mu_{v\Phi(m_{\gamma_1})}\ast \Phi(u_1)\:(\Phi(\mu_{m_{\gamma_2}}(c_{\gamma_1}))\: \Phi(u_2))^\circ&&\text{---\texttt{"}---}\\
	&=\mu_{v\Phi(m_{\gamma_1})} \ast \Phi(u_1) \:  (\mu_{v\Phi(m_{\gamma_2})}(v\Phi(c_{\gamma_1})) \:  \Phi(u_2))^\circ&&\text{using Lemma \ref{lemphiinvcone}}\\
	&=\mu_{v\Phi(m_{\gamma_1})} \ast \Phi(u_1) \:   (\mu_{v\Phi(m_{\gamma_2})} \: \ast \Phi(u_2)(v\Phi(c_{\gamma_1})))^\circ&&\text{using \eqref{eqnbin0}}\\
	&=(\mu_{v\Phi(m_{\gamma_1})} \ast \Phi(u_1))\cdot(\mu_{v\Phi(m_{\gamma_2})} \ast \Phi(u_2))&&\text{using \eqref{eqnbin}}\\
	&=\Phi(\gamma_1)\cdot\Phi(\gamma_2)&&\text{using Lemma \ref{lemphiinvcone}}\\
	&=\gamma_1\phi\cdot\gamma_2\phi.&&
	\end{align*}
\end{proof}

Thus we have the following proposition which can be easily verified.
\begin{proposition}\label{proS}
	Given an inversive category $\mathcal{C}$ and an inversive functor $\Phi$ between two inversive categories $\mathcal{C}_1$ and $\mathcal{C}_2$, the following assignment
	$$\mathcal{C}\mapsto \widetilde{\mathcal{C}}; \quad \Phi \mapsto \phi$$
	constitutes a functor, say $\mathtt{S}$, from the category $\mathbf{IC}$ of inversive categories to the category $\mathbf{IS}$ of inverse semigroups.
\end{proposition}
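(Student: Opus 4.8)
The plan is to verify the three defining properties of a functor: that the object assignment lands in $\mathbf{IS}$, that the morphism assignment lands in the hom-sets of $\mathbf{IS}$, and that it respects identities and composition. The first two are essentially already established: for any inversive category $\mathcal{C}$ the set $\widetilde{\mathcal{C}}$ of its inversive cones is an inverse semigroup by the proposition preceding Theorem~\ref{thmvarPsi}, and for any inversive functor $\Phi\colon\mathcal{C}_1\to\mathcal{C}_2$ the induced map $\phi\colon\widetilde{\mathcal{C}_1}\to\widetilde{\mathcal{C}_2}$, $\gamma\mapsto\Phi(\gamma)$, is a well-defined semigroup homomorphism by Lemma~\ref{lemphiinvcone1}. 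Since composites of inversive functors are again inversive functors (so that $\mathbf{IC}$ is indeed a category), it remains only to check the two functorial laws.

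For identities, I would observe that the identity functor $1_{\mathcal{C}}$ fixes every object and every morphism of $\mathcal{C}$, in particular $v\,1_{\mathcal{C}}(m_\gamma)=m_\gamma$ and $1_{\mathcal{C}}(u)=u$ for any inversive cone written canonically as $\gamma=\mu_{m_\gamma}\ast u$ per Remark~\ref{rmkuninvcone}. Hence, by the definition of the induced map in Lemma~\ref{lemphiinvcone}, $1_{\mathcal{C}}(\gamma)=\mu_{m_\gamma}\ast u=\gamma$, so $\mathtt{S}(1_{\mathcal{C}})$ is the identity homomorphism on $\widetilde{\mathcal{C}}$.

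For composition, given composable inversive functors $\Phi\colon\mathcal{C}_1\to\mathcal{C}_2$ and $\Psi\colon\mathcal{C}_2\to\mathcal{C}_3$, I would fix an inversive cone $\gamma=\mu_{m_\gamma}\ast u$ in $\widetilde{\mathcal{C}_1}$ and apply Lemma~\ref{lemphiinvcone} twice. Since $\Phi(\gamma)=\mu_{v\Phi(m_\gamma)}\ast\Phi(u)$ is the canonical form of an inversive cone in $\mathcal{C}_2$ with $M$-set $v\Phi(m_\gamma)$ and isomorphism part $\Phi(u)$, a second application gives $\Psi(\Phi(\gamma))=\mu_{v\Psi(v\Phi(m_\gamma))}\ast\Psi(\Phi(u))$. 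On the other hand, applying Lemma~\ref{lemphiinvcone} to the composite functor $\Phi\Psi$ yields $(\Phi\Psi)(\gamma)=\mu_{v(\Phi\Psi)(m_\gamma)}\ast(\Phi\Psi)(u)$, and since functors compose left to right one has $v(\Phi\Psi)(m_\gamma)=v\Psi(v\Phi(m_\gamma))$ and $(\Phi\Psi)(u)=\Psi(\Phi(u))$. The two expressions coincide, so $\mathtt{S}(\Phi\Psi)$ and $\mathtt{S}(\Phi)\mathtt{S}(\Psi)$ agree on every element of $\widetilde{\mathcal{C}_1}$, and by Lemma~\ref{lemphiinvcone1} both are homomorphisms, giving their equality as morphisms of $\mathbf{IS}$.

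The computations are routine; the only point demanding care is purely bookkeeping, namely keeping the apex-and-isomorphism decomposition $\gamma=\mu_{m_\gamma}\ast u$ consistent under iterated application of the functors and respecting the left-to-right composition convention. All the substantive work---that $\Phi$ sends inversive cones to inversive cones compatibly with the canonical decomposition, and that the induced map is multiplicative---has already been carried out in Lemmas~\ref{lemphiinvcone} and~\ref{lemphiinvcone1}, so no genuine obstacle remains.
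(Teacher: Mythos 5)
Your proposal is correct and follows essentially the same route as the paper, which states the proposition with the remark that it ``can be easily verified,'' the substantive content having been established in Lemma~\ref{lemphiinvcone} and Lemma~\ref{lemphiinvcone1}. Your explicit verification of the identity and composition laws via the canonical decomposition $\gamma=\mu_{m_\gamma}\ast\gamma(m_\gamma)$ of Remark~\ref{rmkuninvcone} is exactly the routine check the paper leaves to the reader.
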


\subsection{A new structure theorem for inverse semigroups}
Having characterised the principal left (right) ideals of an inverse semigroup $S$ as an inversive category, we now proceed to establish a new structure theorem for inverse semigroups.
First recall the following folklore property of inverse semigroups.

\begin{proposition} \label{proinvrr}
	Let $S$ be an inverse semigroup. For every pair $p,q$ of distinct elements in $S$ there exists an idempotent $e\in S$ such that $pe\ne qe$ (respectively $ep\ne eq$). In particular, $S$ is right reductive.
\end{proposition}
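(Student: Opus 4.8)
The plan is to prove the contrapositive of the right-multiplication version: assuming $pe = qe$ for \emph{every} idempotent $e \in E(S)$, I will deduce $p = q$; the separation statement then follows immediately, and the parenthetical assertion $ep \ne eq$ will be obtained by a symmetric argument. The engine of the proof is the natural partial order on $S$, so I first recall the standard facts (see \cite{howie}) that, in an inverse semigroup, an element $a$ lies below $b$ (written $a \le b$) if and only if $a = bh$ for some idempotent $h$, equivalently $a = h'b$ for some idempotent $h'$, and that this relation $\le$ is antisymmetric.

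The two decisive steps are evaluations of the hypothesis at the ``local identities'' $p^{-1}p$ and $q^{-1}q$. Testing at $e = p^{-1}p$ and using $p\,p^{-1}p = p$, the left-hand side collapses to $p$, so the hypothesis gives $p = q\,(p^{-1}p)$; since $p^{-1}p$ is idempotent, this places $p \le q$. Testing instead at $e = q^{-1}q$ yields $p\,(q^{-1}q) = q\,(q^{-1}q) = q$, that is $q = p\,(q^{-1}q)$, which places $q \le p$. Antisymmetry of the natural partial order now forces $p = q$, establishing that distinct elements are separated by right multiplication by a suitable idempotent. The dual assertion with $ep \ne eq$ follows by running the same argument on the other side, testing the hypothesis $ep = eq$ at the idempotents $pp^{-1}$ and $qq^{-1}$ and invoking the left-hand characterisation $a = h'b$ of $\le$.

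Right reductivity is then a free consequence: if $ep = eq$ holds for \emph{every} element $e \in S$, then in particular it holds for every idempotent, whence the separation result just proved gives $p = q$; thus $S$ is right reductive, and by the other half of the argument left reductive as well. I anticipate no serious obstacle here; the only points demanding care are the correct choice of local idempotent attached to each element ($p^{-1}p,\,q^{-1}q$ for the right version and $pp^{-1},\,qq^{-1}$ for the left) and the decision to route the conclusion through antisymmetry of $\le$ rather than through a direct algebraic elimination, which would be messier since it would require bookkeeping with commuting idempotents instead of a single one-line appeal to the partial order.
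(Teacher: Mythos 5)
Your proof is correct and follows essentially the same route as the paper: both proofs test the hypothesis at exactly the two idempotents $p^{-1}p$ and $q^{-1}q$ to obtain $p=q(p^{-1}p)$ and $q=p(q^{-1}q)$. The only difference is in the final step, where the paper finishes with the explicit chain $p=qp^{-1}p=pq^{-1}qp^{-1}p=pp^{-1}pq^{-1}q=pq^{-1}q=q$ (using that idempotents commute), whereas you outsource precisely this computation to the standard fact that the natural partial order on an inverse semigroup is antisymmetric -- a legitimate, if slightly less self-contained, way to close the argument.
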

\begin{proof}
	Suppose $S$ is inverse and $p,q\in S$ are such that $pe=qe$ for
	every idempotent $e\in S$. Taking the idempotent $p^{-1}p$ for $e$,
	we conclude that $p=pp^{-1}p=qp^{-1}p$ and, similarly, $q=pq^{-1}q$.
	So $$p=qp^{-1}p=pq^{-1}qp^{-1}p=pp^{-1}pq^{-1}q=pq^{-1}q=q.$$	
	Hence the proposition.
\end{proof}
\begin{thm}\label{thmrhoiso}
	If $S$ is an inverse semigroup, the map $\tilde{\rho}\colon S \to \widetilde{\mathbb{L}(S)}$ given by $a\mapsto\rho^a$ is a semigroup isomorphism.
\end{thm}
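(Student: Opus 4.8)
The plan is to check three things about $\tilde{\rho}$: that it is a well-defined homomorphism into $\widetilde{\mathbb{L}(S)}$, that it is injective, and that it is surjective. The first two are almost immediate from results already at hand, so the substance of the proof is surjectivity.

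Since an inverse semigroup is in particular locally inverse, Proposition~\ref{prosr} applies and tells us that $a\mapsto\rho^a$ is a homomorphism from $S$ into $\widehat{\mathbb{L}(S)}$. The discussion preceding Definition~\ref{dfnicone} establishes that every principal cone $\rho^a$ satisfies both conditions of that definition, i.e.\ is an inversive cone; hence the image of $\tilde{\rho}$ lies in the inverse subsemigroup $\widetilde{\mathbb{L}(S)}$ of $\widehat{\mathbb{L}(S)}$, and $\tilde{\rho}\colon S\to\widetilde{\mathbb{L}(S)}$ is a well-defined homomorphism. For injectivity I would appeal once more to Proposition~\ref{prosr}, which reduces the injectivity of $\tilde{\rho}$ to that of the right regular representation $\rho$. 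Proposition~\ref{proinvrr} supplies, for any two distinct $p,q\in S$, an idempotent $e$ with $ep\ne eq$, so that $\rho$ is injective and therefore so is $\tilde{\rho}$.

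The main work is surjectivity: every inversive cone must be shown to be principal. I would fix an arbitrary $\gamma\in\widetilde{\mathbb{L}(S)}$, with apex $Sd$ and singleton $M$-set $m_\gamma=\{Sg\}$ for $d,g\in E(S)$. By Remark~\ref{rmkuninvcone} we may factor $\gamma=\mu_{Sg}\ast\gamma(Sg)$, and Lemma~\ref{lempic} identifies the unique idempotent inversive cone with apex $Sg$ as $\mu_{Sg}=\rho^g$. The component $\gamma(Sg)\colon Sg\to Sd$ is an isomorphism of $\mathbb{L}(S)$, and every such isomorphism has the form $\rho(aa^{-1},a,a^{-1}a)$; choosing $a$ so that $aa^{-1}=g$ and $a^{-1}a=d$, we get $\gamma(Sg)=\rho(g,a,d)$. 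It then remains to compute, for an arbitrary object $Se$,
$$\gamma(Se)=\rho^g(Se)\,\rho(g,a,d)=\rho(e,eg,g)\,\rho(g,a,d)=\rho(e,ega,d),$$
and since $ga=aa^{-1}a=a$ forces $ega=ea$, the right-hand side equals $\rho(e,ea,d)=\rho^a(Se)$. Thus $\gamma=\rho^a=\tilde{\rho}(a)$, proving surjectivity.

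The only point requiring care is the bookkeeping in the surjectivity step: selecting the correct idempotent generators $g$ and $d$ of the $M$-set and apex, and verifying that the element $a$ extracted from the canonical form of the isomorphism $\gamma(Sg)$ is precisely the one whose principal cone reconstitutes $\gamma$. Once the isomorphism is written in canonical form, the verification is the short computation above, and combining the three parts yields that $\tilde{\rho}$ is a bijective homomorphism, hence an isomorphism.
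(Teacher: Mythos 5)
Your proposal is correct and follows essentially the same route as the paper: injectivity via Propositions~\ref{prosr} and~\ref{proinvrr}, and surjectivity by decomposing an arbitrary inversive cone as an idempotent inversive cone (identified as a principal cone $\rho^g$ by Lemma~\ref{lempic}) composed with an isomorphism in canonical form, then checking that the result is the principal cone $\rho^a$. Your version merely makes the paper's one-line computation $\gamma=\rho^e\ast\rho(e,u,f)=\rho^{eu}=\rho^u$ explicit component by component, and adds the (harmless, implicitly assumed) verification that principal cones are inversive so that $\tilde{\rho}$ indeed lands in $\widetilde{\mathbb{L}(S)}$.
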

\begin{proof}
	Using the language of Proposition \ref{prosr}, Proposition \ref{proinvrr} implies that when $S$ is an inverse semigroup, the right regular representation $\rho\colon S\to S_\rho$ is injective. Hence by Proposition \ref{prosr}, the map $\tilde{\rho}\colon S \to \widetilde{\mathbb{L}(S)}$ given by $a\mapsto\rho^a$ is an injective homomorphism. Also, if $\gamma\in \widetilde{\mathbb{L}(S)}$ is any inversive cone, then by Lemma \ref{leminvcone}, $\gamma=\mu\ast u$ for some idempotent inversive cone $\mu$ and an isomorphism $u$ in $\mathbb{L}(S)$. Clearly by Lemma \ref{lempic}, the only idempotent inversive cones in $\mathbb{L}(S)$ are those of the form $\rho^e$ for some $e\in E(S)$. Also, since any isomorphism in $\mathbb{L}(S)$ is of the form $\rho(e,u,f)$ for $e\mathrel{\mathscr{R}}u\mathrel{\mathscr{L}}f$, we see that
	$$\gamma= \mu\ast u= \rho^e\ast \rho(e,u,f)=\rho^{eu}=\rho^u.$$
	So $\tilde{\rho}$ is surjective and hence the theorem.
\end{proof}

Now, by Proposition \ref{proC}, we have a functor $\mathtt{C}$ from the category $\mathbf{IS}$ of inverse semigroups to the category $\mathbf{IC}$ of inversive categories and by Proposition \ref{proS}, we have a functor $\mathtt{S}$ from the category $\mathbf{IC}$ of inversive categories to the category $\mathbf{IS}$ of inverse semigroups. We proceed to prove that the functors $\mathtt{C}$ and $\mathtt{S}$ constitute an adjunction between the categories $\mathbf{IS}$ and $\mathbf{IC}$, leading to a category equivalence.

Observe that for a given inverse semigroup $S$,
$$\mathtt{C}\mathtt{S} (S)=\mathtt{S}(\mathbb{L}(S))=\widetilde{\mathbb{L}(S)}.$$
So if we define the map	$\psi(S)\colon S\to \mathtt{C}\mathtt{S} (S)$ as $a\mapsto\rho^a$, then by Theorem \ref{thmrhoiso}, we see that $\psi(S)$ is a semigroup isomorphism. So, any element of $\mathtt{C}\mathtt{S} (S)$ can be denoted by $\rho^u$ for some $u\in S$. Also for $e\mathrel{\mathscr{R}}u\mathrel{\mathscr{L}}f$, we see that
$$\rho^u=\rho^{eu}= \rho^e\ast \rho(e,u,f).$$
Further, given a homomorphism $\phi$ between two inverse semigroups $S_1$ and $S_2$, using Proposition \ref{proC}, we see that  $\mathtt{C}(\phi) = \Phi$ as defined in  (\ref{eqnPhi}). So, we have a functor $\mathtt{C}(\phi) \colon \mathbb{L}(S_1)\to \mathbb{L}(S_2)$. Thus, $\mathtt{S}(\mathtt{C}(\phi))$ is the semigroup homomorphism as defined in Lemma \ref{lemphiinvcone} between the semigroups $\widetilde{\mathbb{L}(S_1)}$ and $\widetilde{\mathbb{L}(S_2)}$  induced by the functor $\mathtt{C}(\phi)$. Hence for an element $\rho^{u}= \rho^{e}\ast \rho(e,u,f) =\mu_{S_1e} \ast \rho(e,u,f)\in \widetilde{\mathbb{L}(S_1)}=\mathtt{C}\mathtt{S} (S_1)$,
\begin{align*}
(\rho^{u})\mathtt{C}\mathtt{S}(\phi) &= (\mu_{S_1e}\ast \rho(e,u,f))\mathtt{C}\mathtt{S}(\phi)\\
&= \mu_{v\mathtt{C}(\phi)(S_1e)}\ast \mathtt{C}(\phi)(\rho(e,u,f))\\
&= \mu_{v\Phi(S_1e)}\ast \Phi(\rho(e,u,f))\\
&=\mu_{S_2e\phi} \ast\rho(e\phi,u\phi,f\phi)\\
&=\rho^{e\phi}\ast\rho(e\phi,u\phi,f\phi)\\
&=\rho^{e\phi u\phi}=\rho^{e u\phi}= \rho^{u\phi}.
\end{align*}

\begin{proposition}\label{proeq1}
	Given an inverse semigroup $S$, the map $\psi\colon S\mapsto\psi(S)$ is a natural isomorphism from the identity functor $1_{\mathbf{IS}}$ to the functor $\mathtt{C}\mathtt{S}$.
\end{proposition}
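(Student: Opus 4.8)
The plan is to verify the two defining conditions for $\psi$ to be a natural isomorphism from $1_{\mathbf{IS}}$ to $\mathtt{C}\mathtt{S}$: first, that each component $\psi(S)$ is an isomorphism in $\mathbf{IS}$, and second, that the naturality squares commute. The isomorphism condition requires essentially no new work. By construction $\mathtt{C}\mathtt{S}(S)=\widetilde{\mathbb{L}(S)}$ and the component $\psi(S)\colon a\mapsto\rho^a$ is literally the map $\tilde{\rho}$ of Theorem \ref{thmrhoiso}; that theorem asserts precisely that $\tilde{\rho}$ is a semigroup isomorphism, so every $\psi(S)$ is invertible in $\mathbf{IS}$.

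Next I would establish naturality. Fix a homomorphism $\phi\colon S_1\to S_2$ in $\mathbf{IS}$. Since $1_{\mathbf{IS}}$ sends $\phi$ to $\phi$ itself, commutativity of the relevant square reduces, in the left-to-right composition convention of the paper, to the single identity of homomorphisms $\phi\cdot\psi(S_2)=\psi(S_1)\cdot\mathtt{C}\mathtt{S}(\phi)$ from $S_1$ into $\widetilde{\mathbb{L}(S_2)}$. Evaluating both sides at an arbitrary $a\in S_1$, the left-hand side produces $\rho^{a\phi}$ straight from the definitions of $\phi$ and $\psi(S_2)$, while the right-hand side produces $(\rho^a)\mathtt{C}\mathtt{S}(\phi)$. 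Hence the whole square collapses to the pointwise assertion $(\rho^a)\mathtt{C}\mathtt{S}(\phi)=\rho^{a\phi}$.

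The crux is thus exactly the behaviour of $\mathtt{C}\mathtt{S}(\phi)$ on principal cones, and this has already been computed: the chain of equalities displayed immediately before the proposition shows $(\rho^u)\mathtt{C}\mathtt{S}(\phi)=\rho^{u\phi}$ for every $u\in S_1$. Taking $u=a$ gives agreement of the two sides on every element, so the square commutes for each $\phi$ and $\psi$ is a natural transformation; being componentwise an isomorphism, it is a natural isomorphism. I do not expect a genuine obstacle here. The one step carrying real weight, namely the action of the composite functor on the cones $\rho^u$, is already in hand, so what remains is essentially bookkeeping; the only point demanding care is writing the naturality square correctly under the paper's left-to-right convention, so that the equation to be checked is indeed $(\rho^a)\mathtt{C}\mathtt{S}(\phi)=\rho^{a\phi}$ rather than its mirror image.
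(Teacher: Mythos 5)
Your proposal is correct and follows exactly the paper's own argument: invoke Theorem \ref{thmrhoiso} to get that each component $\psi(S)=\tilde{\rho}$ is an isomorphism, then reduce naturality to the pointwise identity $(\rho^a)\mathtt{C}\mathtt{S}(\phi)=\rho^{a\phi}$, which is precisely the displayed computation preceding the proposition. Nothing is missing.
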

\begin{proof}
	Since $\psi(S)$ is a semigroup isomorphism, it suffices to show that $\psi$ is a natural transformation. That is, for a homomorphism $\phi \colon S_1 \to S_2$, we need to show that the following diagram commutes.
	\begin{equation*}\label{}
	\xymatrixcolsep{3pc}\xymatrixrowsep{4pc}\xymatrix
	{
		S_1 \ar[d]_{\phi}\ar[rr]^{\psi(S_1)}   	&& \mathtt{C}\mathtt{S} (S_1) \ar[d]^{\mathtt{C}\mathtt{S} (\phi)} \\
		S_2 \ar[rr]^{\psi(S_2)}   	&& \mathtt{C}\mathtt{S} (S_2)
	}
	\end{equation*}
	For $a\in S_1$, we have
	$$a\phi\psi(S_2)=\rho^{a\phi}.$$
	Also, from the discussion above, we have,
	\begin{align*}\label{}
	a\psi(S_1)\mathtt{C}\mathtt{S} (\phi) = (\rho^a)\mathtt{C}\mathtt{S} (\phi)
	=\rho^{a\phi}.
	\end{align*}
	So, the above diagram commutes and hence $\psi$ is a natural isomorphism.	
\end{proof}

Finally, we need to show that the identity functor $1_{\mathbf{IC}}$ is naturally isomorphic to the functor $\mathtt{S}\mathtt{C}$. Observe that for a given inversive category $\mathcal{C}$,
$$\mathtt{S}\mathtt{C}(\mathcal{C})= \mathtt{C}(\widetilde{\mathcal{C}})=\mathbb{L}(\widetilde{\mathcal{C}}).$$
So if we define a functor $\varPsi(\mathcal{C})\colon \mathcal{C}\to \mathbb{L}(\widetilde{\mathcal{C}})$ as in Theorem \ref{thmvarPsi}, recall that $\varPsi(\mathcal{C})$ is an isomorphism of inversive categories. Given an inversive  functor $\Phi$ between two inversive categories $\mathcal{C}_1$ and $\mathcal{C}_2$, then $\mathtt{S}(\Phi)$ as defined in Lemma \ref{lemphiinvcone1} is a semigroup homomorphism from $\widetilde{\mathcal{C}_1}$ to $\widetilde{\mathcal{C}_2}$. Observe that an arbitrary object of the category $\mathtt{S}\mathtt{C} (\mathcal{C}_1)=\mathbb{L}(\widetilde{\mathcal{C}_1})$ may be denoted by $\widetilde{\mathcal{C}_1}\vartheta$ where $\vartheta$ is an idempotent inversive cone in $\mathbb{L}(\widetilde{\mathcal{C}_1})$. Hence the functor $\mathtt{S}\mathtt{C} (\Phi)$ will map an object $\widetilde{\mathcal{C}_1}\vartheta\mapsto\widetilde{\mathcal{C}_2}\Phi(\vartheta)$ where $\Phi(\vartheta)$ is defined as in Lemma \ref{lemphiinvcone}.

If $\rho(\vartheta,\xi,\upsilon)$ is an arbitrary morphism of the category $\mathbb{L}(\widetilde{\mathcal{C}_1})$ where $\vartheta,\upsilon$ are idempotent inversive cones in $\mathbb{L}(\widetilde{\mathcal{C}_1})$ and $\xi\in\vartheta \widetilde{\mathcal{C}_1} \upsilon$, the functor $\mathtt{S}\mathtt{C} (\Phi)$ map is as follows:
$$\rho(\vartheta,\xi,\upsilon)\mapsto\rho(\Phi(\vartheta),\Phi(\xi),\Phi(\upsilon)),$$
where $\Phi(\vartheta)$ etc. are defined as in Lemma \ref{lemphiinvcone}.

\begin{proposition}\label{proeq2}
	Given an inversive category $\mathcal{C}$, the map $\varPsi\colon \mathcal{C}\mapsto\varPsi(\mathcal{C})$ is a natural isomorphism from the identity functor $1_{\mathbf{IC}}$ to the functor $\mathtt{S}\mathtt{C}$.
\end{proposition}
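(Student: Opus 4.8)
The plan is to mirror the strategy of Proposition~\ref{proeq1}. By Theorem~\ref{thmvarPsi}, every component $\varPsi(\mathcal{C})\colon\mathcal{C}\to\mathbb{L}(\widetilde{\mathcal{C}})$ is already an isomorphism of inversive categories, hence invertible, so it only remains to verify that $\varPsi$ is a natural transformation. Concretely, for an arbitrary inversive functor $\Phi\colon\mathcal{C}_1\to\mathcal{C}_2$ I would check that the square built from $\varPsi(\mathcal{C}_1)$, $\varPsi(\mathcal{C}_2)$, $\Phi$ and $\mathtt{S}\mathtt{C}(\Phi)$ commutes, treating the object part and the morphism part separately by means of the explicit formulas for $\varPsi(\mathcal{C})$ in Theorem~\ref{thmvarPsi} and the description of $\mathtt{S}\mathtt{C}(\Phi)$ recorded just before the statement.

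For the object part, fix $c\in v\mathcal{C}_1$. Following $\varPsi(\mathcal{C}_1)$ and then $\mathtt{S}\mathtt{C}(\Phi)$ sends $c$ to $\widetilde{\mathcal{C}_2}\Phi(\mu_c)$, while following $\Phi$ and then $\varPsi(\mathcal{C}_2)$ sends $c$ to $\widetilde{\mathcal{C}_2}\mu_{v\Phi(c)}$. Since $\mu_c=\mu_c\ast 1_c$ with $m_{\mu_c}=c$ by Remark~\ref{rmkuninvcone}, Lemma~\ref{lemphiinvcone} gives $\Phi(\mu_c)=\mu_{v\Phi(c)}\ast\Phi(1_c)=\mu_{v\Phi(c)}$, so the two objects coincide.

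For the morphism part, fix $f\in\mathcal{C}_1(c,d)$. In view of the object computation both paths yield a morphism of the form $\rho(\mu_{v\Phi(c)},-,\mu_{v\Phi(d)})$, so commutativity reduces to the single identity $\Phi(\mu_c\ast f^\circ)=\mu_{v\Phi(c)}\ast\Phi(f)^\circ$ between the middle components. Here I would use that an inversive functor preserves normal factorisations (inclusions by definition, retractions by the uniqueness clause (IC~2), isomorphisms as any functor does), whence $\Phi(f)^\circ=\Phi(f^\circ)$. Writing the normal factorisation $f=\theta\sigma\iota$, so that $f^\circ=\theta\sigma$ with $\theta\colon c\to\coim f$ a retraction and $\sigma$ an isomorphism, Lemma~\ref{lemidinvcone} identifies $\mu_c\ast\theta$ with the unique idempotent inversive cone $\mu_{\coim f}$, giving $\mu_c\ast f^\circ=\mu_{\coim f}\ast\sigma$; Lemma~\ref{lemphiinvcone} then yields $\Phi(\mu_c\ast f^\circ)=\mu_{v\Phi(\coim f)}\ast\Phi(\sigma)$. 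On the other side, $\mu_{v\Phi(c)}\ast\Phi(f^\circ)=(\mu_{v\Phi(c)}\ast\Phi(\theta))\ast\Phi(\sigma)$, and since $\Phi(\theta)$ is a retraction onto $v\Phi(\coim f)$, Lemma~\ref{lemidinvcone} together with (IC~5) forces $\mu_{v\Phi(c)}\ast\Phi(\theta)=\mu_{v\Phi(\coim f)}$. The two expressions then agree, establishing the identity and hence naturality on morphisms.

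The routine bookkeeping aside, the step I expect to be the main obstacle is exactly this morphism computation: one must compare the middle components with care and invoke the uniqueness of idempotent inversive cones (IC~5) to recognise both $\mu_c\ast\theta$ and $\mu_{v\Phi(c)}\ast\Phi(\theta)$ as the canonical cones $\mu_{\coim f}$ and $\mu_{v\Phi(\coim f)}$ respectively. Once the identity $\Phi(\mu_c\ast f^\circ)=\mu_{v\Phi(c)}\ast\Phi(f)^\circ$ is in hand, naturality is immediate, and combined with the componentwise invertibility from Theorem~\ref{thmvarPsi} this shows that $\varPsi$ is a natural isomorphism $1_{\mathbf{IC}}\to\mathtt{S}\mathtt{C}$.
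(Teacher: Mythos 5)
Your proposal is correct and follows essentially the same route as the paper: since each component $\varPsi(\mathcal{C})$ is an isomorphism by Theorem~\ref{thmvarPsi}, one checks naturality on objects (identifying $\Phi(\mu_c)$ with $\mu_{v\Phi(c)}$ via Lemma~\ref{lemphiinvcone} and the uniqueness in (IC~5)) and on morphisms by reducing to the single identity $\Phi(\mu_c\ast f^\circ)=\mu_{v\Phi(c)}\ast(\Phi(f))^\circ$ between middle components. The only difference is that where the paper disposes of this identity by citing the computation inside Lemma~\ref{lemphiinvcone1}, you verify it directly from the normal factorisation $f=\theta\sigma\iota$ together with Lemmas~\ref{lemidinvcone} and~\ref{lemphiinvcone}, which is a sound (and arguably cleaner) way of supplying the same ingredient.
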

\begin{proof}
	Since $\varPsi(\mathcal{C})$ is an isomorphism, it suffices to show that $\varPsi$ is a natural transformation. That is, for a functor $\Phi \colon \mathcal{C}_1 \to \mathcal{C}_2$, we need to show that the following diagram commutes.
	\begin{equation*}\label{}
	\xymatrixcolsep{3pc}\xymatrixrowsep{4pc}\xymatrix
	{
		\mathcal{C}_1 \ar[d]_{\Phi}\ar[rr]^{\varPsi(\mathtt{C}_1)}   	&& \mathtt{S}\mathtt{C} (\mathcal{C}_1) \ar[d]^{\mathtt{S} \mathtt{C} (\Phi)} \\
		\mathcal{C}_2 \ar[rr]^{\varPsi(\mathtt{C}_2)}   	&& \mathtt{S}\mathtt{C} (\mathcal{C}_2)
	}
	\end{equation*}
	
	For an object $c\in v\mathcal{C}_1$, we have
	\begin{align*}
	\Phi\varPsi(\mathcal{C}_2)(c)&=\varPsi(\mathcal{C}_2)(\Phi(c))\\
	&=\widetilde{\mathcal{C}_2}\mu \quad \text{(where $\mu$ is an idempotent inversive cone such that }c_\mu=\Phi(c).)
	\end{align*}
	Also, from the discusion above, we have,
	\begin{align*}\label{}
	\varPsi(\mathcal{C}_1)\mathtt{S}\mathtt{C}(\Phi) (c) &= \mathtt{S}\mathtt{C}(\Phi)(\widetilde{\mathcal{C}_1}\vartheta)\quad \text{(where } c_\vartheta=c)\\
	&=\widetilde{\mathcal{C}_2}\Phi(\vartheta) \quad (\text{where $\Phi(\vartheta)$ is an inversive cone as in Lemma \ref{lemphiinvcone}.})
	\end{align*}
	By the defnition of $\Phi(\vartheta)$ as in Lemma \ref{lemphiinvcone}, it is clear that if $\vartheta$ is an idempotent inversive cone in $\mathcal{C}_1$ with apex $c$, then $\Phi(\vartheta)$ is an idempotent inversive cone $\mathcal{C}_2$ with apex $\Phi(c)$. Moreover since there is a unique idempotent inversive cone with a given vertex, we conclude that $\mu=\Phi(\vartheta)$. Hence
	$$\Phi\varPsi(\mathcal{C}_2)(c)= \varPsi(\mathcal{C}_1)\mathtt{S}\mathtt{C}(\Phi) (c)$$ and so the diagram commutes for every object $c\in v\mathcal{C}_1$.
	
	Further for a morphism $f\colon c\to d$ in the category $\mathcal{C}_1$, we have
	\begin{align*}
	\Phi\varPsi(\mathcal{C}_2)(f)=\varPsi(\mathcal{C}_2)(\Phi(f))=\rho(\mu,\mu\ast(\Phi(f))^\circ,\nu)
	\end{align*}
	where $\mu$ and $\nu$ are idempotent inversive cones such that $c_\mu=\Phi(c)$ and $c_\nu=\Phi(d)$.
	Also,
	\begin{align*}
	\varPsi(\mathcal{C}_1)\mathtt{S}\mathtt{C}(\Phi) (f)&=\mathtt{S}\mathtt{C}(\Phi) (\rho(\vartheta,\vartheta\ast f^\circ,\upsilon)) &&\text{where }\vartheta,\upsilon\text{ are idempotent inversive cones}\\
	& &&\text{in }\mathbb{L}(\widetilde{\mathcal{C}_1})\text{ such that }c_\vartheta=c\text{ and }c_\upsilon=d\\
	&=\rho(\Phi(\vartheta),\Phi(\vartheta\ast f^\circ),\Phi(\upsilon)) &&\text{where }\Phi(\vartheta) \text{ etc. are as in Lemma \ref{lemphiinvcone}.}
	\end{align*}
	As argued in the case of objects, we can easily see that $\mu=\Phi(\vartheta)$ and $\nu=\Phi(\upsilon)$. Further, since $\Phi$ is an inversive functor, as argued in the proof of Lemma \ref{lemphiinvcone1}, we can verify that $\mu\ast(\Phi(f))^\circ=\Phi(\vartheta\ast f^\circ)$. That is, for a morphism $f\colon c\to d$ in the category $\mathcal{C}_1$,
	$$\Phi\varPsi(\mathcal{C}_2)(f)= \varPsi(\mathcal{C}_1)\mathtt{S}\mathtt{C}(\Phi) (f)$$ and  the diagram commutes for every morphism $f$ in $\mathcal{C}_1$. So, the diagram is commutative and hence $\varPsi$ is a natural isomorphism.	
\end{proof}

Combining Propositions \ref{proeq1} and \ref{proeq2}, we arrive at the main result of this section:
\begin{thm}\label{thminvcat}
	The category $\mathbf{IS}$ of inverse semigroups is equivalent to the category $\mathbf{IC}$ of inversive categories.	
\end{thm}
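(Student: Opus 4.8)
The plan is to read the equivalence off directly from the two preceding propositions, using the standard criterion for a category equivalence. Recall (see \cite{mac}) that a pair of functors $F\colon\mathcal{A}\to\mathcal{B}$ and $G\colon\mathcal{B}\to\mathcal{A}$ exhibits $\mathcal{A}$ and $\mathcal{B}$ as equivalent categories precisely when the composite $FG$ (apply $F$, then $G$, in the left-to-right convention of this paper) is naturally isomorphic to the identity functor $1_{\mathcal{A}}$ and the composite $GF$ is naturally isomorphic to $1_{\mathcal{B}}$. No further verification (such as checking fullness, faithfulness, or essential surjectivity by hand) is needed once both composites are known to be naturally isomorphic to the respective identity functors.

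First I would fix the two functors supplied by Propositions \ref{proC} and \ref{proS}, namely $\mathtt{C}\colon\mathbf{IS}\to\mathbf{IC}$ and $\mathtt{S}\colon\mathbf{IC}\to\mathbf{IS}$, and set $F:=\mathtt{C}$ and $G:=\mathtt{S}$. The composite endofunctor $\mathtt{C}\mathtt{S}$ of $\mathbf{IS}$ is then the assignment $S\mapsto\widetilde{\mathbb{L}(S)}$, while the composite endofunctor $\mathtt{S}\mathtt{C}$ of $\mathbf{IC}$ is the assignment $\mathcal{C}\mapsto\mathbb{L}(\widetilde{\mathcal{C}})$, exactly as recorded in the discussions preceding Propositions \ref{proeq1} and \ref{proeq2}.

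The key step is simply to invoke the two natural isomorphisms already established. Proposition \ref{proeq1} provides a natural isomorphism $\psi\colon 1_{\mathbf{IS}}\Rightarrow\mathtt{C}\mathtt{S}$, and Proposition \ref{proeq2} provides a natural isomorphism $\varPsi\colon 1_{\mathbf{IC}}\Rightarrow\mathtt{S}\mathtt{C}$. These are precisely the two natural isomorphisms demanded by the equivalence criterion for the pair $(\mathtt{C},\mathtt{S})$, so the conclusion follows at once.

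The genuine content of the theorem therefore lives entirely in the two propositions being combined, and the present step is purely formal. The only point requiring care is bookkeeping of the composition order: because functors and morphisms here compose left to right, one must confirm that $\mathtt{C}\mathtt{S}$ is indeed the composite landing in $\mathbf{IS}$ and $\mathtt{S}\mathtt{C}$ the one landing in $\mathbf{IC}$, so that $\psi$ and $\varPsi$ are attached to the correct composites. I anticipate no obstacle beyond this, since each component $\psi(S)=\tilde{\rho}$ has already been shown to be a semigroup isomorphism (Theorem \ref{thmrhoiso}), each component $\varPsi(\mathcal{C})$ to be an isomorphism of inversive categories (Theorem \ref{thmvarPsi}), and the naturality of both families was verified in the proofs of Propositions \ref{proeq1} and \ref{proeq2}.
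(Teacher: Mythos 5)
Your proposal is correct and matches the paper's own proof: the paper likewise obtains Theorem \ref{thminvcat} by combining Propositions \ref{proeq1} and \ref{proeq2}, which supply the natural isomorphisms $1_{\mathbf{IS}}\Rightarrow\mathtt{C}\mathtt{S}$ and $1_{\mathbf{IC}}\Rightarrow\mathtt{S}\mathtt{C}$ for the functors of Propositions \ref{proC} and \ref{proS}, and then invokes the standard criterion for category equivalence. Your additional remarks on composition order and on where the real content lies are accurate but not needed beyond this.
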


\subsection{Inductive groupoids and inversive categories}

Any element $x$ in an inverse semigroup $S$ can be seen as a \emph{morphism} from the idempotent $e:=xx^{-1}$ to the idempotent $f:=x^{-1}x$.
\begin{center}
	\begin{tikzpicture}[ node distance=0 cm,outer sep = 0pt]
	\node[draw, circle, minimum size=.6cm, fill=violet!30, node distance=1.75cm] (c1) at (-3,0) {$e$};
	\node[draw, circle, minimum size=.6cm, fill=violet!30, node distance=1.75cm] (c2) at (1,1) {$f$};
	\node[minimum height=.5cm, minimum width=.6cm, fill=white!20,anchor=south west] at (-4.2,.5) {$1_e$};
	\draw[->][line width=.5pt,red,in=90, out=180,looseness=8] (c1.west) to (c1.north);
	\node[minimum height=.5cm, minimum width=.6cm, fill=white!20,anchor=south west] at (1.2,1.35) {$1_f$};
	\draw[->][line width=.5pt,red,in=0, out=90,looseness=8] (c2.north) to (c2.east);
	\node[minimum height=.5cm, minimum width=.6cm, fill=white!20,anchor=south west] at (-1.5,1.4) {$x$};
	\draw[->][line width=1pt,blue,in=150, out=40] (c1.north) to  (c2.west);
	\node[minimum height=.5cm, minimum width=.6cm, fill=white!20,anchor=north west] at (-1.1,-.4) {$x^{-1}$};
	\draw[->][line width=1pt,blue,in=330, out=220] (c2.south) to  (c1.east);
	\end{tikzpicture}
\end{center}
In this manner, one can naturally associate a groupoid $\mathcal{G}(S)$ with a given inverse semigroup $S$ such that the set $v\mathcal{G}(S)$ of objects coincides with the set of idempotents of the semigroup $S$. Abstracting the characteristic properties of the groupoid $\mathcal{G}(S)$ leads to the following definition.

\begin{dfn}\label{ig}
	Let $\mathcal{G}$ be a groupoid and denote by $\mathbf{d}\colon\mathcal{G}\to v\mathcal{G}$ and $\mathbf{r}\colon\mathcal{G}\to v\mathcal{G}$ its domain and codomain maps, respectively. Let $\leq$ be a partial order on $\mathcal{G}$. Then $(\mathcal{G},\leq)$ is called an \emph{inductive groupoid} if $(v\mathcal{G},\leq)$ is a semilattice and for all $e,f \in v\mathcal{G}$ and all $x,y,u,v\in\mathcal{G}$, the following hold.
	\begin{enumerate}
		\item [(OG1)] If $u\leq x$, $v\leq y$ and $\mathbf{r}(u)=\mathbf{d}(v)$, $\mathbf{r}(x)=\mathbf{d}(y)$, then $uv \leq xy$.
		\item [(OG2)] If $x\leq y$, then $x^{-1}\leq y^{-1}$.
		\item [(OG3)] If $1_e\leq 1_{\mathbf{d}(x)}$, then there exists a unique morphism $e{\downharpoonleft}x\in\mathcal{G}$ (called the \emph{restriction} of $x$ to $e$) such that $e{\downharpoonleft} x\leq x$ and $\mathbf{d}(e{\downharpoonleft} x) = e$.
		\item [(OG3$^*$)] If $1_f\leq 1_{\mathbf{r}(x)}$, then there exists a unique morphism $x{\downharpoonright}f\in\mathcal{G} $ (called the \emph{corestriction} of $x$ to $f$) such that $x{\downharpoonright} f \leq x$ and $\mathbf{r}(x{\downharpoonright} f) = f$.
	\end{enumerate}
\end{dfn}

The inductive groupoids with inductive functors as morphisms form the locally small category  $\mathbf{IG}$ of inductive groupoids, see \cite[Chapter 4]{lawson} for details. The above discussed association between semigroups and groupoids can be extended to a category isomorphism (not just a category equivalence) as follows.
\begin{thm}[\mdseries Ehresmann--Schein--Nambooripad Theorem, see {\cite[Theorem 4.1.8]{lawson}}]\label{thmesn}
	The category $\mathbf{IG}$ of inductive groupoids is isomorphic to the category $\mathbf{IS}$ of inverse semigroups.
\end{thm}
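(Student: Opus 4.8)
The plan is to exhibit two functors $G\colon\mathbf{IS}\to\mathbf{IG}$ and $F\colon\mathbf{IG}\to\mathbf{IS}$ and to verify that they are mutually inverse on the nose, thereby upgrading the object-level correspondence sketched above (an inverse semigroup $S$ yielding the groupoid $\mathcal{G}(S)$) to a strict isomorphism of categories.

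First I would complete the construction of $G$. On objects it sends $S$ to the groupoid $\mathcal{G}(S)$ with object set $E(S)$ and with the elements of $S$ as morphisms, where $x\colon xx^{-1}\to x^{-1}x$; composition is the restricted product, defined for $x,y$ precisely when $\mathbf{r}(x)=x^{-1}x=yy^{-1}=\mathbf{d}(y)$, in which case $xy$ is the semigroup product, and the inverse of $x$ is $x^{-1}$. Equipping $\mathcal{G}(S)$ with the natural partial order ($a\leq b$ iff $a=aa^{-1}b$) makes $(v\mathcal{G}(S),\leq)=(E(S),\leq)$ a semilattice by Theorem~\ref{thminv}. I would then check (OG1)--(OG3$^*$) directly: (OG1) and (OG2) follow from the compatibility of the natural order with multiplication and inversion, while the (co)restrictions are realised by $e{\downharpoonleft}x:=ex$ and $x{\downharpoonright}f:=xf$, their uniqueness being immediate since $a\leq x$ with $aa^{-1}=e$ forces $a=aa^{-1}x=ex$. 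On morphisms, a homomorphism $\phi\colon S_1\to S_2$ induces the order-preserving functor $\mathcal{G}(\phi)$ acting as $\phi$ on elements.

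Next I would build $F$. On objects it sends an inductive groupoid $(\mathcal{G},\leq)$ to the set of its morphisms equipped with the \emph{pseudoproduct}
\[
x\otimes y:=\bigl(x{\downharpoonright}e\bigr)\bigl(e{\downharpoonleft}y\bigr),\qquad e:=\mathbf{r}(x)\wedge\mathbf{d}(y),
\]
the right-hand product being the groupoid composition, legitimate since $\mathbf{r}(x{\downharpoonright}e)=e=\mathbf{d}(e{\downharpoonleft}y)$. I would verify that $x^{-1}$ acts as an inverse of $x$ (one computes $x\otimes x^{-1}=1_{\mathbf{d}(x)}$, $x^{-1}\otimes x=1_{\mathbf{r}(x)}$, and hence $x\otimes x^{-1}\otimes x=x$); since the idempotents of $(\mathcal{G},\otimes)$ are exactly the identities $1_e$ and these commute (their pseudoproduct is $1_{e\wedge f}$), Theorem~\ref{thminv}(3) shows $F(\mathcal{G})$ is inverse. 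An inductive functor $\Psi$ is sent to the map it induces on morphism sets, which respects $\otimes$ because $\Psi$ preserves the order, the meet, and the (co)restrictions.

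Finally I would confirm that $F$ and $G$ are mutually inverse. Starting from $S$, the pseudoproduct on $\mathcal{G}(S)$ collapses to the original product, since $e=x^{-1}x\wedge yy^{-1}$ makes $x{\downharpoonright}e=xe$ and $e{\downharpoonleft}y=ey$ with $(xe)(ey)=xy$, so $FG(S)=S$; starting from $(\mathcal{G},\leq)$, the natural order on $F(\mathcal{G})$ recovers $\leq$ and the restricted product recovers the groupoid composition, so $GF(\mathcal{G})=\mathcal{G}$, and the identities on morphisms are routine. I expect the main obstacle to be the \emph{associativity of the pseudoproduct}: establishing $(x\otimes y)\otimes z=x\otimes(y\otimes z)$ demands careful bookkeeping of the several (co)restrictions against the meets $\mathbf{r}(x)\wedge\mathbf{d}(y)$ and $\mathbf{r}(y)\wedge\mathbf{d}(z)$, and is the one place where (OG1)--(OG3$^*$) must be used in concert. (Alternatively, within the present framework one can extract the inductive groupoid of isomorphisms sitting inside the inversive category $\mathbb{L}(S)$ and appeal to the equivalence $\mathbf{IS}\simeq\mathbf{IC}$ of Theorem~\ref{thminvcat}; this route, however, yields only an equivalence rather than the strict isomorphism asserted here.)
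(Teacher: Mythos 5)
The paper offers no proof of this statement at all---it is quoted directly from Lawson {\cite[Theorem 4.1.8]{lawson}}---and your proposal is precisely the standard proof given in that source: the restricted-product groupoid $\mathcal{G}(S)$ with the natural partial order in one direction, the pseudoproduct $x\otimes y=(x{\downharpoonright}e)(e{\downharpoonleft}y)$ with $e=\mathbf{r}(x)\wedge\mathbf{d}(y)$ in the other, and the check that the two functors are strictly mutually inverse. Your outline is correct, it rightly isolates the one substantive verification (associativity of the pseudoproduct, which must be carried out in full for a complete proof), and your closing parenthetical correctly observes that the route available inside this paper---Theorem \ref{thminvcat} combined with the equivalence $\mathbf{IG}\simeq\mathbf{IC}$---yields only an equivalence of categories rather than the isomorphism asserted here, which is exactly why the paper cites the result instead of deriving it.
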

Using Theorem \ref{thmesn} and Theorem \ref{thminvcat}, by transitivity, we see that the category $\mathbf{IG}$ of inductive groupoids is equivalent to the category $\mathbf{IC}$ of inversive categories. Now, we proceed to describe a direct category equivalence between the categories $\mathbf{IG}$ and $\mathbf{IC}$, without any semigroup assumptions. This may be seen as a very much simplified version of the results in \cite{indcxn1,indcxn2}.

First, given an inversive category $\mathcal{C}$ with the semilattice order $\le$, we proceed to identify the inductive groupoid associated with $\mathcal{C}$. To this end, let $\mathcal{G}_\mathcal{C}$ be the subcategory of the category $\mathcal{C}$ consisting of all isomorphisms in $\mathcal{C}$. Clearly $\mathcal{G}_\mathcal{C}$ is a groupoid. Given any two morphisms $f$ and $g$ in $\mathcal{G}_\mathcal{C}$ with domains $c$ and $d$ respectively, define a relation $\le_\mathcal{C}$ as follows:
$$f\le_\mathcal{C}g \iff c\le d \text{ and } f=(\iota(c,d)g)^\circ$$
where $\iota(c,d)$ is the inclusion from $c$ to $d$ and $(\iota(c,d)g)^\circ$ is the epimorphic component of the monomorphism $\iota(c,d)g$ in the inversive category $\mathcal{C}$. It can be easily seen that $\le_\mathcal{C}$ is a partial order on $\mathcal{G}_\mathcal{C}$.

Since the order $\le_\mathcal{C}$ reduces to the semilattice order $\le$ on the identities of $\mathcal{G}_\mathcal{C}$, we observe that $(v\mathcal{G}_\mathcal{C},\le_\mathcal{C})$ forms a semilattice. Further, given a morphism $g$ in $\mathcal{G}_\mathcal{C}$ with domain $d$ and if $1_c\le_\mathcal{C}1_d$, then by letting
$$c{\downharpoonleft}g:= (\iota(c,d)g)^\circ$$
as the restriction of the morphism $g$ in the groupoid $\mathcal{G}_\mathcal{C}$ to the object $c$, we can easily verify the following.

\begin{proposition}
	$(\mathcal{G}_\mathcal{C},\le_\mathcal{C})$ is an inductive groupoid.	
\end{proposition}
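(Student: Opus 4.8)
The plan is to verify the four axioms (OG1), (OG2), (OG3), (OG3$^*$) of Definition~\ref{ig}; the facts that $\le_\mathcal{C}$ is a partial order and that $(v\mathcal{G}_\mathcal{C},\le_\mathcal{C})$ is a semilattice have already been recorded above, and that $\mathcal{G}_\mathcal{C}$ is a groupoid is clear. Everything rests on one elementary observation about inversive categories, which I would isolate first: \emph{the epimorphic component of a monomorphism is an isomorphism, and a monomorphism $\varphi$ factors as $\varphi=\varphi^\circ\iota$ with $\varphi^\circ$ an isomorphism and $\iota$ the inclusion of its image into its codomain}. Indeed, if $\varphi=\theta\sigma\iota$ is a normal factorisation, then $\sigma\iota$ is a monomorphism, so $\theta$ is forced to be a monomorphism; but a retraction $\theta$ splitting $\iota(c',c)$ that is a monomorphism satisfies $\theta\iota(c',c)\theta=\theta=1_c\theta$, whence $\theta\iota(c',c)=1_c$, making $\theta$ an inverse of the inclusion $\iota(c',c)$. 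Since the inclusions form a strict preorder, this forces $\theta=1_c$, so $\varphi^\circ=\sigma$ is an isomorphism. In particular, for an isomorphism $g\colon d\to\mathbf{r}(g)$ and $c\le d$, the morphism $\iota(c,d)g$ is a monomorphism, so $c{\downharpoonleft}g=(\iota(c,d)g)^\circ$ is genuinely an isomorphism, i.e. a morphism of $\mathcal{G}_\mathcal{C}$.

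Granting this, (OG3) is immediate: $c{\downharpoonleft}g$ is an isomorphism with $\mathbf{d}(c{\downharpoonleft}g)=c$, and $c{\downharpoonleft}g\le_\mathcal{C}g$ holds by the very definition of $\le_\mathcal{C}$, while uniqueness is also built into that definition, since any $h\in\mathcal{G}_\mathcal{C}$ with $h\le_\mathcal{C}g$ and $\mathbf{d}(h)=c$ must equal $(\iota(c,d)g)^\circ$. For (OG2), suppose $x\le_\mathcal{C}y$ with $c:=\mathbf{d}(x)\le\mathbf{d}(y)=:d$, so $x=(\iota(c,d)y)^\circ$ and hence $\iota(c,d)y=x\,\iota(c^\ast,\mathbf{r}(y))$, where $c^\ast:=\mathbf{r}(x)\le\mathbf{r}(y)$. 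Right-multiplying by $y^{-1}$ gives $\iota(c^\ast,\mathbf{r}(y))\,y^{-1}=x^{-1}\iota(c,d)$; the right-hand side is a normal factorisation (isomorphism followed by inclusion), so by the uniqueness of normal factorisations (IC~3) its epimorphic component is $x^{-1}$, which says precisely $x^{-1}\le_\mathcal{C}y^{-1}$.

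I would then obtain (OG3$^*$) for free by dualising (OG3) through (OG2): for $f\le\mathbf{r}(x)$ set $x{\downharpoonright}f:=(f{\downharpoonleft}x^{-1})^{-1}$. Since $f{\downharpoonleft}x^{-1}$ is the restriction of the isomorphism $x^{-1}$ (whose domain is $\mathbf{r}(x)$) to $f$, its inverse is an isomorphism with $\mathbf{r}(x{\downharpoonright}f)=f$, and $x{\downharpoonright}f\le_\mathcal{C}x$ follows from (OG2) applied to $f{\downharpoonleft}x^{-1}\le_\mathcal{C}x^{-1}$; uniqueness transfers along the same route. Finally, (OG1) is the only axiom requiring genuine bookkeeping. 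Writing $a\le a'$ and $b\le b'$ for the domains of $u\le_\mathcal{C}x$ and $v\le_\mathcal{C}y$, the hypotheses $\mathbf{r}(u)=\mathbf{d}(v)$, $\mathbf{r}(x)=\mathbf{d}(y)$ together with (OG2) force $\mathbf{r}(u)=b$ and $\mathbf{r}(x)=b'$. Using $\iota(a,a')x=u\,\iota(b,b')$ and $\iota(b,b')y=v\,\iota(\mathbf{r}(v),\mathbf{r}(y))$, I would compute
\[
\iota(a,a')\,xy=u\,\iota(b,b')\,y=uv\,\iota(\mathbf{r}(v),\mathbf{r}(y)),
\]
which exhibits $uv\,\iota(\mathbf{r}(v),\mathbf{r}(xy))$ as a normal factorisation of $\iota(a,a')\,xy$; by (IC~3) its epimorphic component is $uv$, and since $\mathbf{d}(uv)=a\le a'=\mathbf{d}(xy)$ this is exactly $uv\le_\mathcal{C}xy$. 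The main obstacle I anticipate is the disciplined use of the key observation: one must confirm that the factorisation $\varphi=\varphi^\circ\iota$ with $\varphi^\circ$ an isomorphism really applies to each monomorphism $\iota(c,d)g$ occurring here, and that every equation of the shape (isomorphism)$\,\cdot\,$(inclusion) is recognised as \emph{the} normal factorisation, so that uniqueness (IC~3) may be invoked to read off the epimorphic component. Once this is in place, (OG1) and (OG2) reduce to the two displayed composite computations and the remaining axioms are essentially definitional.
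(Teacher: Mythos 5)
Your proof is correct. Note that the paper offers no argument for this proposition at all---it defines $\le_\mathcal{C}$ and the restriction $c{\downharpoonleft}g:=(\iota(c,d)g)^\circ$ and then asserts that the axioms ``can easily be verified''---so your direct verification of (OG1), (OG2), (OG3), (OG3$^*$) is precisely the content left to the reader, and it follows the intended route: isolate the fact that a monomorphism factors as (isomorphism)$\cdot$(inclusion), and then use uniqueness of normal factorisations (IC~3) to read off epimorphic components from every equation of that shape. One small elision in your key observation deserves a line: after showing that the retraction $\theta$ is a monomorphism and hence a two-sided inverse of $\iota(c',c)$, the conclusion $\theta=1_c$ does not follow from strictness of the preorder alone, since strictness only excludes non-identity isomorphisms \emph{inside} the subcategory of inclusions, and $\theta$ is a priori only a morphism of $\mathcal{C}$. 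You first need $\theta$ to be an inclusion, which follows from condition (3) of Definition~\ref{catsub} applied to the equation $1_c=\theta\,\iota(c',c)$; then $\iota(c',c)$ is an isomorphism of the strict preorder and is therefore an identity. With that line inserted, the observation---and with it your whole verification, including the (OG1) bookkeeping and the transfer of (OG3$^*$) through (OG2)---is sound.
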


Also, given an inversive functor between two inversive categories, its restriction will give an inductive functor between the two corresponding inductive groupoids. Thus the above correspondence is functorial between the categories $\mathbf{IC}$ and $\mathbf{IG}$.

Conversely, given an inductive groupoid $(\mathcal{G},\leq)$, we proceed to `build' the inversive category $\mathcal{C}_\mathcal{G}$ associated with it. To this end, we will follow the scheme used in \cite[Section 4]{indcxn2}. We consider three intermediary categories: $\mathcal{P}_\mathcal{G}$, $\mathcal{G}$ and $\mathcal{Q}_\mathcal{G}$  responsible for inclusions, isomorphisms and retractions, respectively. Then we introduce a special partial binary operation to build the required category $\mathcal{C}_\mathcal{G}$ from these categories. A somewhat similar construction can be seen in \cite[Theorem 3.4]{rajancat}.

Given an inductive groupoid $(\mathcal{G},\leq)$, the categories $\mathcal{P}_\mathcal{G}$ and $\mathcal{Q}_\mathcal{G}$ are such that
$$v\mathcal{P}_\mathcal{G}= v\mathcal{Q}_\mathcal{G} :=v\mathcal{G}.$$
Also, for $e,f\in v\mathcal{G}$ such that $e\leq f$, a morphism in $\mathcal{P}_\mathcal{G}$ is defined as the unique morphism $\iota(e,f)$ from $e$ to $f$ and a morphism in $\mathcal{Q}_\mathcal{G}$ is defined as the unique morphism $q(f,e)$ from $f$ to $e$. The category $\mathcal{P}_\mathcal{G}$ is a strict preorder and its morphisms are called inclusions.

In the case of general normal categories in \cite{indcxn1,indcxn2}, a further auxiliary category was needed. Here we do not need it since inversive categories have unique factorisations and this allows us to use $\mathcal{G}$ itself as a building block. So, the set of objects of the required category is $v\mathcal{C}_\mathcal{G} :=v\mathcal{G}$ and morphisms in $\mathcal{C}_\mathcal{G}$ are defined by:
\begin{equation*}
\mathcal{C}_\mathcal{G}:=\{(q,\alpha,j)\in \mathcal{Q}_\mathcal{G} \times \mathcal{G} \times \mathcal{P}_\mathcal{G} : \mathbf{r}(q) =\mathbf{d}(\alpha) \text{ and }  \mathbf{r}(\alpha) =\mathbf{d}(j)\}.
\end{equation*}

As in \cite[Section 4]{indcxn2}, we  denote an arbitrary morphism $(q,\alpha,j)$ in $\mathcal{C}_\mathcal{G}$ by just $[e,\alpha,f\rangle$ where $e=\mathbf{d}(q)$ and $f=\mathbf{r}(j)$. Given two such morphisms $[e,\alpha,f\rangle, [f,\beta,g \rangle \in \mathcal{C}_\mathcal{G}$, we  compose them as follows. For $h=\mathbf{r}(\alpha)\wedge\mathbf{d}(\beta)$,
\begin{equation}\label{eqncomplg}
[e,\alpha,f\rangle  \:  [f,\beta,g \rangle  =  [  e,\alpha{\downharpoonright}h\cdot h{\downharpoonleft}\beta,g \rangle .
\end{equation}

Then $\mathcal{C}_\mathcal{G}$ forms a category such that $\mathcal{P}_\mathcal{G}$ is a strict preorder subcategory by identifying any morphism $j\in \mathcal{P}_\mathcal{G}$ with $[\mathbf{d}(j),1_{\mathbf{d}(j)},\mathbf{r}(j)\rangle \in \mathcal{C}_\mathcal{G}$. Similarly, $\mathcal{Q}_\mathcal{G}$ is a subcategory of $\mathcal{C}_\mathcal{G}$ by identifying any morphism $q\in \mathcal{Q}_\mathcal{G}$ with $[\mathbf{d}(q),1_{\mathbf{r}(q)},\mathbf{r}(q)\rangle \in \mathcal{C}_\mathcal{G}$.
It is easy to verify that $(\mathcal{C}_\mathcal{G},\mathcal{P}_\mathcal{G})$ satisfies (IC 1), (IC 2) and (IC 3).

Let $f= \iota(e_1,e_2)q(e_2,e_3)\cdots\iota(e_{2n-1},e_{2n}) q(e_{2n},e_{2n+1})$ be an arbitrary morphism in the core $\langle \mathcal{C}_\mathcal{G} \rangle$ so that
\begin{align*}
f&=[e_1,1_{e_1},e_2\rangle \: [e_2,1_{e_3},e_3\rangle\: [e_3,1_{e_3},e_4\rangle \cdots [e_{2n-1},1_{e_{2n-1}},e_{2n-1}\rangle\: [e_{2n},1_{e_{2n+1}},e_{2n+1}\rangle\\
&=[e_1,1_{e_1\wedge e_3},e_3\rangle\: [e_3,1_{e_3},e_4\rangle \cdots [e_{2n-1},1_{e_{2n-1}},e_{2n-1}\rangle\: [e_{2n},1_{e_{2n+1}},e_{2n+1}\rangle\\
&=[e_1,1_{e_1\wedge e_3},e_4\rangle \cdots [e_{2n-1},1_{e_{2n-1}},e_{2n-1}\rangle\: [e_{2n},1_{e_{2n+1}},e_{2n+1}\rangle\\
&=[e_1,1_{e},e_{2n+1}\rangle  \qquad\text{ where }e= \bigwedge\limits_{i=0}^n e_{2i+1}\\
&=[e_1,1_{e},e\rangle [e,1_{e},e_{2n+1}\rangle.
\end{align*}
Hence any morphism in $\langle \mathcal{C}_\mathcal{G} \rangle$ admits an inversive factorisation and so (IC 4) is satisfied.

Finally, given $\alpha\in \mathcal{G}$, if we define a map $r^\alpha:v\mathcal{C}_\mathcal{G} \to \mathcal{C}_\mathcal{G} $ as
$$r^\alpha\colon g\mapsto [  g, h{\downharpoonleft}\alpha, \mathbf{r}(\alpha) \rangle $$
where $h=g\wedge\mathbf{d}(\alpha)$, we can easily verify that $r^\alpha$ is an inversive cone with apex $\mathbf{r}(\alpha)$ and $m_{r^\alpha}=\mathbf{d}(\alpha)$. Hence for any object $e\in v\mathcal{C}_\mathcal{G}$, if we define  $r^{e}$ as
$$r^e\colon g\mapsto [  g, 1_h, e \rangle $$
where $h=g\wedge e$, then $r^{e}$ is the unique idempotent inversive cone with apex $e$. Thus (IC 5) is also satisfied. Hence we have the following proposition.
\begin{proposition}
	$(\mathcal{C}_\mathcal{G},\mathcal{P}_\mathcal{G})$ is an inversive category.
\end{proposition}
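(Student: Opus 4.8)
The plan is to verify that the construction $(\mathcal{C}_\mathcal{G},\mathcal{P}_\mathcal{G})$ fulfills the five axioms (IC~1)--(IC~5) of an inversive category. Most of the groundwork has already been laid in the preceding paragraphs: it has been noted that $(\mathcal{C}_\mathcal{G},\mathcal{P}_\mathcal{G})$ satisfies (IC~1), (IC~2) and (IC~3); that the explicit core computation delivers an inversive factorisation, establishing (IC~4); and that the family $r^e$ provides, for each object $e$, a unique idempotent inversive cone with apex $e$, yielding (IC~5). So the residual task of the proof is essentially to \emph{assemble} these verified facts into a single statement and to fill in the small gaps that the text only gestures at.

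First I would confirm that $\mathcal{C}_\mathcal{G}$ really is a category with subobjects in the sense of Definition~\ref{catsub}, with $\mathcal{P}_\mathcal{G}$ as its distinguished strict preorder: the identifications $j\mapsto[\mathbf{d}(j),1_{\mathbf{d}(j)},\mathbf{r}(j)\rangle$ and $q\mapsto[\mathbf{d}(q),1_{\mathbf{r}(q)},\mathbf{r}(q)\rangle$ embed $\mathcal{P}_\mathcal{G}$ and $\mathcal{Q}_\mathcal{G}$ as subcategories, and the composition rule~\eqref{eqncomplg} must be checked to be associative and to respect these embeddings. Since $(v\mathcal{G},\le)$ is a semilattice (part of the data of an inductive groupoid) and $v\mathcal{P}_\mathcal{G}=v\mathcal{G}$, the order on objects induced by the inclusions of $\mathcal{P}_\mathcal{G}$ is exactly $\le$, so $(\mathcal{C}_\mathcal{G},\mathcal{P}_\mathcal{G})$ is an so-category and (IC~1) holds. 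For (IC~2) and (IC~3), I would make explicit that the inclusion $\iota(e,f)=[e,1_e,f\rangle$ splits by the retraction $q(f,e)=[f,1_e,e\rangle$, that this retraction is forced by the unique-factorisation format of morphisms as triples, and that an arbitrary $[e,\alpha,f\rangle$ factors as $[e,1_e,\mathbf{d}(\alpha)\rangle\,[\mathbf{d}(\alpha),\alpha,\mathbf{r}(\alpha)\rangle\,[\mathbf{r}(\alpha),1_{\mathbf{r}(\alpha)},f\rangle$, a retraction--isomorphism--inclusion decomposition whose uniqueness follows from the rigidity of the triple encoding.

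The crux of the argument, and the step I expect to demand the most care, is the interplay between the restriction/corestriction operations of the inductive groupoid and the categorical meet $\wedge$ in (IC~4) and (IC~5). In particular, the core computation collapsing a zig-zag of inclusions and retractions into $[e_1,1_e,e_{2n+1}\rangle$ with $e=\bigwedge_{i=0}^n e_{2i+1}$ relies on repeated use of axioms (OG3) and (OG3$^*$) together with the semilattice identities, and one must check that the meets are taken consistently so that the telescoping is well defined at each stage. Likewise, verifying that $r^\alpha\colon g\mapsto[g,h{\downharpoonleft}\alpha,\mathbf{r}(\alpha)\rangle$ with $h=g\wedge\mathbf{d}(\alpha)$ is genuinely a cone requires the compatibility condition $\iota(g',g)\,r^\alpha(g)=r^\alpha(g')$ for $g'\le g$, which unwinds to a restriction identity in $\mathcal{G}$; confirming that $m_{r^\alpha}=\{\mathbf{d}(\alpha)\}$ uses that $h{\downharpoonleft}\alpha$ is an isomorphism precisely when $g\wedge\mathbf{d}(\alpha)=\mathbf{d}(\alpha)$, i.e.\ $\mathbf{d}(\alpha)\le g$, which by the cone structure singles out $\mathbf{d}(\alpha)$. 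Finally, uniqueness of the idempotent inversive cone $r^e$ with a given apex $e$ follows from Remark~\ref{rmkinvcone}, since such a cone is determined by its component at $m=\{e\}$, which is forced to be $1_e$. Collecting these verifications establishes all of (IC~1)--(IC~5) and hence the proposition.
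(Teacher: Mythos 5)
Your proposal is correct and takes essentially the same approach as the paper: the paper offers no separate proof, the proposition being stated as the summary of exactly the discussion you assemble — (IC 1)--(IC 3) declared easy to verify, the explicit collapse of a zig-zag of inclusions and retractions for (IC 4), and the cones $r^\alpha$ and $r^e$ for (IC 5), with your filled-in details (the splitting $[e,1_e,f\rangle\,[f,1_e,e\rangle=1_e$, the triple-encoding argument for unique normal factorisation, and the uniqueness of $r^e$ via Remark~\ref{rmkinvcone}) all being sound. The only blemish is notational: by the paper's identification $q\mapsto[\mathbf{d}(q),1_{\mathbf{r}(q)},\mathbf{r}(q)\rangle$, the retraction factor in your normal factorisation should read $[e,1_{\mathbf{d}(\alpha)},\mathbf{d}(\alpha)\rangle$ rather than $[e,1_e,\mathbf{d}(\alpha)\rangle$, i.e.\ the middle entry is the identity at the codomain of the retraction.
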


As in \cite[Section 4]{indcxn2}, we can easily show that the discussed correspondence between the categories $\mathbf{IG}$ and $\mathbf{IC}$ is also functorial. This leads to the following theorem.

\begin{thm}
	The category $\mathbf{IG}$ of inductive groupoids is equivalent to the category $\mathbf{IC}$ of inversive categories.	
\end{thm}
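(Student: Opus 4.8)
The plan is to show that the two constructions described above are mutually inverse up to natural isomorphism. Denote by $\mathtt{G}\colon\mathbf{IC}\to\mathbf{IG}$ the functor sending an inversive category $\mathcal{C}$ to the inductive groupoid $(\mathcal{G}_\mathcal{C},\le_\mathcal{C})$, and by $\mathtt{K}\colon\mathbf{IG}\to\mathbf{IC}$ the functor sending an inductive groupoid $(\mathcal{G},\le)$ to the inversive category $\mathcal{C}_\mathcal{G}$; both have already been shown to be functorial. To establish the equivalence I would exhibit natural isomorphisms $\mathtt{K}\mathtt{G}\cong 1_{\mathbf{IC}}$ and $\mathtt{G}\mathtt{K}\cong 1_{\mathbf{IG}}$.

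For the round trip $\mathtt{K}\mathtt{G}$, fix an inversive category $\mathcal{C}$ and consider $\mathcal{C}_{\mathcal{G}_\mathcal{C}}$. Since $v\mathcal{C}_{\mathcal{G}_\mathcal{C}}=v\mathcal{G}_\mathcal{C}=v\mathcal{C}$, the comparison functor $\Theta_\mathcal{C}$ can be taken to be the identity on objects. On morphisms I would send a triple $[e,\alpha,f\rangle$ to the composite $q(e,\mathbf{d}(\alpha))\,\alpha\,\iota(\mathbf{r}(\alpha),f)$ formed in $\mathcal{C}$, that is, to the morphism whose retraction--isomorphism--inclusion normal factorisation has exactly these three factors. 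Property (IC 3) — unique normal factorisation — makes $\Theta_\mathcal{C}$ a bijection on every hom-set, and it carries the inclusion $[e,1_e,f\rangle$ to $\iota(e,f)$, so it is inclusion preserving. Thus $\Theta_\mathcal{C}$ will be an isomorphism of inversive categories as soon as we check that it preserves composition.

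For the reverse round trip $\mathtt{G}\mathtt{K}$, fix an inductive groupoid $(\mathcal{G},\le)$. A morphism $[e,\alpha,f\rangle$ of $\mathcal{C}_\mathcal{G}$ is an isomorphism precisely when its retraction and inclusion components are identities, i.e.\ when it has the form $[\mathbf{d}(\alpha),\alpha,\mathbf{r}(\alpha)\rangle$; hence $\alpha\mapsto[\mathbf{d}(\alpha),\alpha,\mathbf{r}(\alpha)\rangle$ is a bijection on objects and morphisms, giving an isomorphism of groupoids $\mathcal{G}\to\mathcal{G}_{\mathcal{C}_\mathcal{G}}$. It remains to match the orders: for isomorphisms with $\mathbf{d}(\alpha)=c\le d=\mathbf{d}(\beta)$, one computes via the composition rule \eqref{eqncomplg} that the inclusion $[c,1_c,d\rangle$ composed with $[d,\beta,\mathbf{r}(\beta)\rangle$ has epimorphic component $c{\downharpoonleft}\beta$, so that $\alpha\le_{\mathcal{C}_\mathcal{G}}\beta$ iff $\alpha=c{\downharpoonleft}\beta$ iff $\alpha\le\beta$ in $\mathcal{G}$ by (OG3). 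This shows the map is an isomorphism of inductive groupoids.

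The main obstacle is the composition check for $\Theta_\mathcal{C}$. Given consecutive triples $[e,\alpha,f\rangle$ and $[f,\beta,g\rangle$, their product in $\mathcal{C}_{\mathcal{G}_\mathcal{C}}$ is $[e,\alpha{\downharpoonright}h\cdot h{\downharpoonleft}\beta,g\rangle$ with $h=\mathbf{r}(\alpha)\wedge\mathbf{d}(\beta)$, whereas $\Theta_\mathcal{C}[e,\alpha,f\rangle\,\Theta_\mathcal{C}[f,\beta,g\rangle$ contains the middle segment $\iota(\mathbf{r}(\alpha),f)\,q(f,\mathbf{d}(\beta))$. I would resolve this using the standard so-category identity $\iota(\mathbf{r}(\alpha),f)\,q(f,\mathbf{d}(\beta))=q(\mathbf{r}(\alpha),h)\,\iota(h,\mathbf{d}(\beta))$, which routes an inclusion followed by a retraction through the meet $h$, and then recognise the surviving isomorphic part as the realisation in $\mathcal{C}$ of the groupoid product $(\alpha{\downharpoonright}h)(h{\downharpoonleft}\beta)$ appearing in \eqref{eqncomplg}; the uniqueness in (IC 2) and (IC 3) pins down these identifications and guarantees that the resulting expression is already in normal form. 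Once composition is settled, the naturality of $\Theta$ and of the groupoid comparison along an inversive, respectively inductive, functor is a routine diagram chase using the explicit action of $\mathtt{G}$ and $\mathtt{K}$ on morphisms, and the theorem follows.
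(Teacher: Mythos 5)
Your proposal is correct and takes essentially the same route as the paper: the paper proves this theorem via exactly the two constructions $\mathcal{C}\mapsto(\mathcal{G}_\mathcal{C},\le_\mathcal{C})$ and $\mathcal{G}\mapsto(\mathcal{C}_\mathcal{G},\mathcal{P}_\mathcal{G})$ set out before the statement and then simply asserts that the correspondence is functorial (deferring details to \cite{indcxn2}), so your explicit comparison isomorphisms $\Theta_\mathcal{C}\colon\mathcal{C}_{\mathcal{G}_\mathcal{C}}\to\mathcal{C}$ and $\mathcal{G}\to\mathcal{G}_{\mathcal{C}_\mathcal{G}}$ witnessing $\mathtt{K}\mathtt{G}\cong 1_{\mathbf{IC}}$ and $\mathtt{G}\mathtt{K}\cong 1_{\mathbf{IG}}$ are precisely the content the paper leaves to the cited work. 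One small correction: the ``standard so-category identity'' $\iota(\mathbf{r}(\alpha),f)\,q(f,\mathbf{d}(\beta))=q(\mathbf{r}(\alpha),h)\,\iota(h,\mathbf{d}(\beta))$ is not a fact about general so-categories with unique splittings but is exactly axiom (IC 4) (inversive factorisation in the core $\langle\mathcal{C}\rangle$, together with Remark \ref{rmkinvfact2} for identifying the isomorphic parts), which is available here because $\mathcal{C}$ is inversive --- so the step is valid, just not ``standard''.
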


Once the equivalence of the categories $\mathbf{IG}$ and $\mathbf{IC}$ has been established, the equivalence between the categories of inverse semigroups and inductive groupoids becomes a consequence of Theorem~\ref{thminvcat}. Thus, we have recovered a weak version of the Ehresmann--Schein--Nambooripad Theorem from our consideration.  (Recall that the `full' Ehresmann--Schein--Nambooripad Theorem claims that the two latter categories are isomorphic rather than equivalent.)

\section{Completely 0-simple semigroups}\label{seccss}

In this section, we  discuss how the abstract construction described in Section \ref{seccxnlis} simplifies in the case of completely $0$-simple semigroups. This section may also be seen as a relatively straightforward generalisation of the discussion in \cite{css} wherein the cross-connection structure of completely simple semigroups were studied in great detail. So whenever an exact repetition of the argument suffices, without further comment we refer the reader to \cite{css} for the details of the results outlined here. The ensuing discussion having a relatively low entry threshold may also act as a gateway to cross-connection theory for beginners.

It is known \cite{rees} that a completely 0-simple semigroup is isomorphic to the Rees matrix semigroup $\mathscr{M}^\circ(G;I, L ;P)$ described as follows. We set $\mathscr{M}^\circ(G;I, L ;P) = (I \times G \times  L ) \cup\{0\}$ where $G$ is a group, $I$ and $ L $ are sets and $P = (p_{\ell i})$ is an $L\times I$ matrix with entries in $G^\circ:= G\cup \{0\}$ such that no row or column of $P$ consists entirely of zeros (then $P$ is called a \emph{regular sandwich matrix}). The multiplication in $\mathscr{M}^\circ(G;I, L ;P)$ is given by:
\begin{equation}
\begin{aligned}
&(i,a,\ell) \: (j,b, k ) =
\begin{cases}
(i,ap_{\ell j}b, k ) &\text{ if }p_{\ell j} \neq 0, \\
0 &\text{ if } p_{\ell j} = 0,
\end{cases}\\
&(i,a,\ell) \: 0 = 0 \: (i,a,\ell) = 0 \: 0 = 0.
\end{aligned}
\end{equation}

Now, we proceed to specialise the results in Section \ref{seccxnlis}. To that end, we fix a Rees matrix semigroup $\mathscr{M}^\circ(G;I, L ;P)$ and denote it by just $S$. Then the category $\mathbb{L}(S)$ of principal left ideals may be described as follows.
\begin{equation}
v\mathbb{L}(S) =  L ^\circ: = L  \cup \{0\}
\end{equation}
In the sequel, any arbitrary principal left ideal $S(i,a,\ell)$ shall be represented by just $\ell$ whenever there is no confusion. Then for $\ell_1,\ell_2\in v\mathbb{L}(S)$ such that $\ell_1,\ell_2\ne 0$ and for each $g\in G^\circ$, an arbitrary morphism from $\ell_1$ to $\ell_2$ is given by $\rho(\ell_1, g,\ell_2)$ such that for each $(i,a,\ell_1) \in S(i_1,a_1,\ell_1)$,
$$\rho(\ell_1, g,\ell_2)\colon (i,a,\ell_1) \mapsto (i,g,\ell_2) \in S(i_2,a_2,\ell_2).$$
Thus $\mathbb{L}(S)(\ell_1,\ell_2) = G^\circ$ if $\ell_1,\ell_2\ne 0$.

Observe that $ L ^\circ$ is a strict preorder and the only non-trivial inclusions arise from the relation $0\subseteq\ell$. So, for each $\ell\ne 0$ in the set $ L $, we have an inclusion $\rho(0,0,\ell)$ from $0$ to $\ell$. Further, for each inclusion $\rho(0,0,\ell)\in \mathbb{L}(S)(0,\ell)$, we have a \emph{unique} retraction $\rho(\ell,0,0)\in \mathbb{L}(S)(\ell,0)$ and thus every inclusion in $\mathbb{L}(S)$ splits uniquely. Finally, the only morphism in $\mathbb{L}(S)(0,0)$ may be denoted by $\rho(0,0,0)$. Thus the composition of the morphisms  in $\mathbb{L}(S)$ is as described in the earlier sections:
$$\rho(\ell_1,g,\ell_2)\:\rho(\ell_2,h,\ell_3)=\rho(\ell_1,gh,\ell_3)$$
where $\ell_1,\ell_2,\ell_3\in L ^\circ$ and $g,h\in G^\circ$.
In the sequel, whenever there is no ambiguity regarding the domain and codomain of the morphism, we  represent an arbitrary morphism $\rho(\ell_1,g,\ell_2)$ in $\mathbb{L}(S)$ by just $\rho_g$. It can be easily verified that $\mathbb{L}(S)$ as described above forms a category with subobjects.

Observe that an arbitrary morphism $\rho(\ell_1,g,\ell_2) \in \mathbb{L}(S)$ either is an isomorphism wherein $\ell_1,\ell_2,g \ne0$ or has a unique normal factorisation of the form
$$\rho(\ell_1,0,\ell_2)=\rho(\ell_1,0,0)\rho(0,0,\ell_2).$$
Hence every morphism in $\mathbb{L}(S)$ has a unique normal factorisation.

Now, we proceed to describe the normal cones in $\mathbb{L}(S)$. As usual, given sets $A$ and $B$, the set of all functions $f\colon B\to A$ is denoted $A^B$. It is handy to represent $f\in A^B$ as a $|B|$-tuple of elements of $A$ of the form ${(a_\alpha)}_{\alpha\in B}$, where $a_\alpha:=f(\alpha)$.

The set $T:=(G^\circ)^ L  \times  L $ forms a semigroup under the following multiplication. Given $\gamma = ({(g_\alpha)}_{\alpha\in L};\ell), \:\delta = ({(h_\alpha)}_{\alpha\in L};k) \in T$ such that ${(g_\alpha)}_{\alpha\in L}, {(h_\alpha)}_{\alpha\in L} \in (G^\circ)^ L $ and $\ell, k  \in  L $,
\begin{equation}\label{eqnsgls1}
\gamma \ast \delta  : = ({(g_\alpha h_{\ell})}_{\alpha\in L}; k )
\end{equation}
where $h_{\ell} \in G^\circ$ and $g_\alpha h_{\ell}$ is the product in $G^\circ$.

It is clear that normal cones in $\mathbb{L}(S)$ can be represented as unique elements in $T$: any normal cone $\gamma$ with apex $\ell\ne 0$ can be represented by the element $({(g_\alpha)}_{\alpha\in L};\ell)$ in $T$ such that for an arbitrary $ k \in \mathbb{L}(S)$,
$$\gamma ( k ) =\rho( k ,g_ k ,\ell) \text{ if } k \ne 0 \text{ and } \gamma (0) =\rho(0,0,\ell).$$

The set $U=\{0\}^ L \times  L $ is an ideal of $T$. It consists of elements of $T$ of the form $({(0_\alpha)}_{\alpha\in L};\ell)$ which are not normal cones since none of their components are isomorphisms. Taking the Rees quotient $R:=T/U$, we can verify that the semigroup $\widehat{\mathbb{L}(S)}$ of normal cones in $\mathbb{L}(S)$ is isomorphic to $R$ whereas the zero of $R$ corresponds to the unique normal cone $\gamma_0$ in $\mathbb{L}(S)$ with apex $0$, namely $\gamma_0 (\ell)= \rho(\ell,0,0).$

We mention in passing that the semigroup $T$ is nothing but the \emph{wreath product} $G^\circ \wr  L$ of the $0$-group $G^\circ$ and the right zero semigroup $L$. (Thus the semigroup $\widehat{\mathbb{L}(S)}$ is isomorphic to the Rees quotient $(G^\circ \wr  L ) / (\{0\} \wr  L )$.) This observation reflects, in a nutshell, the fact that Nambooripad's somewhat mysterious composition of normal cones \eqref{eqnbin} is actually a sort of wreath product multiplication extended to a general category setting.

In the sequel, by abuse of notation, the image of an element $\gamma\in T$ in the quotient semigroup $R$ will also be denoted by just $\gamma$, whenever there is no confusion. So, for an arbitrary element $x=(i,a,\ell)$ in $S$, the principal cone $\rho^x$ in $\widehat{\mathbb{L}(S)}$ may be denoted by $({(p_{\alpha i} a)}_{\alpha\in L};\ell) \in (G^\circ)^ L  \times  L $. This normal cone $\rho^x$ with apex $\ell$ is obtained by the right translation of the $i$-th column of $P$ with the non-zero group element $a$. Since the sandwich matrix $P$ is \emph{regular}, we have $\rho^x\in T$.

Now, we proceed to characterise the Green relation $\mathscr{R}$ in the semigroup $\widehat{\mathbb{L}(S)}$; this in turn will provide the description of the unambiguous dual $\mathbb{L}(S)^*$. Extending the discussion in \cite{css}, we can see that given an arbitrary element $\gamma = ({(g_\alpha)}_{\alpha\in L};\ell) \in \widehat{\mathbb{L}(S)}$, the principal right ideal  $\gamma\widehat{\mathbb{L}(S)}$ generated by $\gamma$ is determined by the $ |L| $-tuple $g_\gamma= {(g_\alpha)}_{\alpha\in L} \in (G^\circ)^ L $ and
$$ \gamma\widehat{\mathbb{L}(S)}= g_\gamma G^\circ \times  L .$$

Thus, the $\mathscr{R}$-classes in the semigroup $\widehat{\mathbb{L}(S)}$ are in one-one correspondence with the set $\{g_\gamma G^\circ: g_\gamma\in(G^\circ)^ L \}$. Observe that the element ${(0_\alpha)}_{\alpha\in L}G^\circ$ corresponds to the $\mathscr{R}$-class generated by the zero element $\gamma_0$.

Thus the set of objects (i.e.\ the set of $H$-functors $\{H(\gamma;-):\gamma\in \widehat{\mathbb{L}(S)}\}$) of the normal dual $\mathbb{L}(S)^*$ may be characterised as the set $\{g_\gamma G^\circ: g_\gamma\in(G^\circ)^ L \}$.  Furthermore, using the fact that the $H$-functors are representable, we can show that the set of morphisms in $\mathbb{L}(S)^*$ are in a `dual' correspondence with the set of morphisms in $\mathbb{L}(S)$. So the morphisms between any two non-zero objects in $\mathbb{L}(S)^*$ can be characterised as the set $(G^\circ)^\text{op}$ and in the sequel we  denote such an arbitrary morphism by $\sigma_g$ where $g\in G^\circ$. Similarly we can describe the morphisms with the zero object $H(\gamma_0;-)$ as $\sigma_0$.

Having described the categories $\mathbb{L}(S)$ and $\mathbb{L}(S)^*$, using the left-right duality, we can easily describe the categories $\mathbb{R}(S)$ and $\mathbb{R}(S)^*$ as follows.
\begin{equation}
v\mathbb{R}(S) = I^\circ :=I \cup \{0\}.
\end{equation}
Any arbitrary principal right ideal $(i,a,\ell)S$ is represented by just $i$ and the set of morphisms $\mathbb{R}(S)(i_1,i_2) = (G^\circ)^\text{op}$ if $i_1, i_2\ne 0$ wherein an arbitrary morphism is denoted by just $\lambda_g$ where $g\in G^\circ$. If $0\subseteq i$, we have the associated inclusion morphism $\lambda(0,0,i)$ and the retraction $\lambda(0,0,i)$; the unique morphism in $\mathbb{R}(S)(0,0)$ is $\lambda(0,0,0)$.

Then it can be easily shown that the semigroup of normal cones in $\mathbb{R}(S)$ is given by the Rees quotient $((G^\circ)^I \times I)^\text{op}/(\{0\}^I \times I)^\text{op}$. Then the object set of the unambiguous dual $\mathbb{R}(S)^*$ is characterised as $\{ G^\circ g_\gamma: g_\gamma\in(G^\circ)^ I \}$ and an arbitrary morphism between any two non-zero objects is denoted by $\tau_g$ where $ g\in G^\circ$. In the sequel, we  refer to the unambiguous categories arising from a completely $0$-simple semigroup as \emph{completely unambiguous} categories.

Having constructed the completely unambiguous categories $\mathbb{L}(S)$ and $\mathbb{R}(S)$ associated with a completely simple semigroup $S$ using the sets $L, I$ and the $0$-group $G^\circ$, now we proceed to characterise the cross-connection involved.

Observe that given the $L \times I$ matrix $P$ with entries from $G^\circ$, we can define functors $\Gamma \colon \mathbb{R}(S) \to \mathbb{L}(S)^*$ and $\Delta \colon \mathbb{L}(S) \to \mathbb{R}(S)^*$ such that
\begin{gather}\label{eqng}
v\Gamma_P:{i}  \mapsto p_i  G^\circ,  \quad    v\Gamma_P:0\mapsto \gamma_0 G^\circ, \quad \Gamma_P:\: \lambda_g \mapsto \sigma_g \quad \text{ and } \quad \Gamma_P:\: \lambda_0 \mapsto \sigma_0;\\
\label{eqnd}
v\Delta_P:{\ell}  \mapsto G^\circ p_{\ell},  \quad  v\Delta_P:0\mapsto  G^\circ \delta_0, \quad \Delta_P:\: \rho_g \mapsto \tau_g \quad \text{ and } \quad \Delta_P:\: \rho_0 \mapsto \tau_0
\end{gather}
where $p_i$ is the $i$-th column of the matrix $P$ and  ${p_\ell}$ is the $\ell$-th row of $P$. It can be readily seen that these functors constitute a cross-connection. In other words, the sandwich matrix $P$ of the semigroup $\mathscr{M}^\circ(G;I, L ;P)$ completely determines the cross-connection functors. The discussion in this section can be
summarised in the following theorem.

\begin{thm}
Every completely $0$-simple semigroup $S=\mathscr{M}^\circ(G;I, L ;P)$ determines a cross-connection $(\mathbb{L}(S),\mathbb{R}(S);\Gamma_P,\Delta_P)$ between the completely unambiguous categories $\mathbb{L}(S)$ and $ \mathbb{R}(S)$ as described above. Conversely, given two arbitrary sets $L, I$ and a $0$-group $G^\circ$, we can define two completely unambiguous categories $\mathcal{C}$ and $\mathcal{D}$ such that an arbitrary \emph{regular} $L \times I$ matrix $P$ with entries from $G^\circ$ will determine a unique cross-connection $(\mathcal{C},\mathcal{D};\Gamma_P,\Delta_P)$; the cross-connection semigroup so obtained is the completely $0$-simple semigroup $\mathscr{M}^\circ(G;I, L ;P)$.
\end{thm}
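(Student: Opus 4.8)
The plan is to handle the two directions separately, leaning on the explicit descriptions of $\mathbb{L}(S)$, $\mathbb{R}(S)$, their duals and the semigroups of normal cones already assembled in this section, so that only the genuinely cross-connection-theoretic points remain. For the forward direction I would first confirm that $\Gamma_P$ and $\Delta_P$ of \eqref{eqng} and \eqref{eqnd} are well-defined, inclusion-preserving, fully faithful local isomorphisms: inclusion-preservation reduces to checking that the single nontrivial inclusion $0\subseteq i$ maps to $\gamma_0 G^\circ\subseteq p_iG^\circ$; full faithfulness between nonzero objects is the statement that the group-indexed morphism sets $(G^\circ)^{\mathrm{op}}$ and $G^\circ$ are carried bijectively via $\lambda_g\mapsto\sigma_g$; and the local-isomorphism property holds because each nonzero ideal $(i)$ has object set $\{0,i\}$, mapping isomorphically onto $(p_iG^\circ)$. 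The only substantive point is the compatibility \eqref{eqncxnms}, for which I would compute the $M$-sets explicitly. An idempotent in the $\mathscr{R}$-class $i$ has the form $(i,p_{\ell i}^{-1},\ell)$ with $p_{\ell i}\neq 0$, and a component of the corresponding principal cone is an isomorphism exactly when the relevant matrix entry is invertible; hence $M\Gamma_P(i)=\{\ell\in L:p_{\ell i}\neq 0\}$ and $M\Delta_P(\ell)=\{i\in I:p_{\ell i}\neq 0\}$. Thus $\ell\in M\Gamma_P(i)\iff p_{\ell i}\neq 0\iff i\in M\Delta_P(\ell)$, establishing \eqref{eqncxnms}. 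Alternatively, under the identifications of this section $\Gamma_P,\Delta_P$ coincide with the abstract $\Gamma_S,\Delta_S$, and the cross-connection property is inherited from Theorem \ref{thmcxns}.

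For the converse I would define $\mathcal{C}$ and $\mathcal{D}$ abstractly from $L,I$ and $G^\circ$ exactly as $\mathbb{L}(S)$ and $\mathbb{R}(S)$ were described, noting that the verification that they are unambiguous categories is verbatim the one carried out above. A regular matrix $P$ then determines $\Gamma_P,\Delta_P$ through \eqref{eqng} and \eqref{eqnd}; the $M$-set computation above shows these are local isomorphisms satisfying \eqref{eqncxnms}, so $\Omega=(\mathcal{C},\mathcal{D};\Gamma_P,\Delta_P)$ is a cross-connection, and it is manifestly the only one whose object- and morphism-assignments are the ones prescribed by the entries of $P$. It then remains to identify $\mathbb{S}\Omega$. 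The quick route is to set $S=\mathscr{M}^\circ(G;I,L;P)$, observe $\mathcal{C}=\mathbb{L}(S)$, $\mathcal{D}=\mathbb{R}(S)$ and $\Gamma_P=\Gamma_S$, $\Delta_P=\Delta_S$ so that $\Omega=\Omega S$, and invoke the equivalence of Theorem \ref{cateq} (equivalently, the theorem asserting $\mathbb{L}(\mathbb{S}\Omega)\cong\mathcal{C}$) to get $\mathbb{S}\Omega\cong\mathbb{S}(\Omega S)\cong S$.

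The more illuminating—and harder—route, which I expect to be the crux, is to verify $\mathbb{S}\Omega\cong\mathscr{M}^\circ(G;I,L;P)$ by direct computation, since this is where the sandwich matrix reappears as the cross-connection datum. I would first show that the linked pairs are exactly $\{(\rho^x,\lambda^x):x\in S\}$, with $x\mapsto(\rho^x,\lambda^x)$ a bijection: surjectivity because $\widehat{\Gamma}$ and $\widehat{\Delta}$ are precisely the principal cones $\rho^x$ and $\lambda^x$ (each $\gamma(c_1,d_1)\ast u$ with $(c_1,d_1)\in E_\Omega$ reduces to some $\rho^x$), and injectivity because the apices of $\rho^x$ and $\lambda^x$ recover $\ell$ and $i$, after which either tuple recovers $a$. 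The main obstacle is matching the products. For $\gamma=\rho^{(i,a,\ell)}$ and $\gamma'=\rho^{(j,b,k)}$, using \eqref{eqnbin} together with the wreath-product description of cone composition, the $\ell$-th component of $\gamma'$ is $p_{\ell j}b$, so $\gamma'(c_\gamma)=\rho(\ell,p_{\ell j}b,k)$. When $p_{\ell j}\neq 0$ this is an isomorphism equal to its own epimorphic component, and evaluating $\gamma\ast\gamma'(c_\gamma)$ via \eqref{eqnbin0} yields the tuple $(p_{\alpha i}\,a\,p_{\ell j}b)_{\alpha\in L}$ with apex $k$, i.e.\ $\rho^{(i,ap_{\ell j}b,k)}$; when $p_{\ell j}=0$ the epimorphic component is the retraction to $0$ and $\gamma\cdot\gamma'$ collapses to $\gamma_0$.

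This reproduces the Rees product $(i,a,\ell)(j,b,k)$ exactly, the $\delta$-coordinates $\lambda^{(j,b,k)}\cdot\lambda^{(i,a,\ell)}$ being handled by the dual computation (which records the same entry $p_{\ell j}$ but retains the $\mathscr{R}$-class $i$ as apex). Hence $x\mapsto(\rho^x,\lambda^x)$ is a semigroup isomorphism, and $\mathbb{S}\Omega\cong\mathscr{M}^\circ(G;I,L;P)$, completing the identification. I expect the only real care to be needed in bookkeeping the zero cases and in confirming that the reversed multiplication order in the rule $(\gamma,\delta)\circ(\gamma',\delta')=(\gamma\cdot\gamma',\delta'\cdot\delta)$ is precisely what keeps both coordinates consistent with the single Rees product.
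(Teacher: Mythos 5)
Your proposal is correct and follows essentially the same route as the paper, whose ``proof'' is precisely the preceding discussion: the explicit description of $\mathbb{L}(S)$, $\mathbb{R}(S)$ and their duals, the functors $\Gamma_P,\Delta_P$ read off from the columns and rows of $P$, and the identification of cone composition with the wreath-product/Rees-quotient multiplication. Your explicit $M$-set computation ($M\Gamma_P(i)=\{\ell: p_{\ell i}\neq 0\}$) and the direct verification that $(\rho^x,\lambda^x)$-pairs multiply by the Rees rule are exactly the details the paper delegates to the analogous arguments in \cite{css} and to Theorems \ref{thmcxns} and \ref{cateq}, so you have merely written out what the paper asserts ``can be readily seen.''
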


The above described correspondence can be extended to a category equivalence between the category of completely $0$-simple semigroups and the category of cross-connections of completely unambiguous categories.

\section{Conclusion and future work}
\label{secconcl}

We have applied Nambooripad's theory of cross-connections to locally inverse semigroups, obtaining a category equivalence between the category of cross-connected unambiguous categories and the category of locally inverse semigroups. Being specialised to inverse semigroups, our main result leads to a new structure theorem that, as we have shown, is equivalent to a weak version of the Ehresmann--Schein--Nambooripad Theorem; if specialised to completely 0-simple semigroups, the result turns out to constitute a category-theoretic formulation of the Rees Theorem. Thus, we see that the cross-connection approach reveals a common background for the two classical and seemingly unrelated structure theorems.

This observation suggests a promising direction for further investigations aimed to find cross-connections explanations for several known (and possibly yet unknown) structure results for the class of locally inverse semigroups and its important subclasses. In particular, we mean the covering theorems by McAlister~\cite{mcal} and Pastijn--Oliveira~\cite{pastoliv} that are formulated in terms of the Rees matrix construction.

Considerable attention has been paid in the literature to the structure of the bifree locally inverse semigroup on a set and several related constructs~\cite{auinger93,auinger94,auinger95, MR1484423,MR3669784,oliveira}. Our result implies that these constructs possess categorical counterparts, which, in our opinion, are worth being studied: this might shed new light on the structure of bifree locally inverse semigroups and their relatives and reveal categorical creatures that might be of independent interest.

A rather unexpected application of the Ehresmann--Schein--Nambooripad Theorem has been recently demonstrated by Malandro~\cite{malandro} who used it for an efficient enumeration of finite inverse semigroups. We wonder if our main result can be applied in a similar fashion for an enumeration of finite locally inverse semigroups.

\end{document}